\newtheorem{thm}{Theorem}[section]
\newtheorem{prop}[thm]{Proposition}
\theoremstyle{definition}
\newtheorem{Def}[thm]{Definition}
\newtheorem{rem}[thm]{Remark}
\newtheorem*{ack}{Acknowledgement}
\newtheorem{case}{Case}
\numberwithin{equation}{section}
\numberwithin{figure}{section}
\def\tr{{\text{\rm{tr}}}}
\def\rchi{{\hbox{\raise1.5pt\hbox{$\chi$}}}}
\def\isom{\cong}
\def\tensor{\otimes}
\def\dsum{\oplus}
\def\const{{\text{\rm{const}}}}
\def\a{\alpha}
\def\b{\beta}
\def\lam{\lambda}
\def\gam{\gamma}
\def\Gam{\Gamma}
\def\Sym{{\text{\rm{Sym}}}}
\def\Spec{{\text{\rm{Spec}}}}
\def\Arg{{\text{\rm{Arg}}}}
\def\Proj{{\text{\rm{Proj}}}}
\def\supp{{\text{\rm{supp}}}}
\def\Airy{{\text{\rm{Airy}}}}
\def\Catalan{{\text{\rm{Catalan}}}}
\def\Pic{{\text{\rm{Pic}}}}
\def\Gal{{\text{\rm{Gal}}}}
\def\NS{{\text{\rm{NS}}}}
\def\erf{{\text{\rm{erf}}}}
\def\Gauss{{\text{\rm{Gau\ss}}}}
\newcommand{\bea}{\begin{eqnarray}}
\newcommand{\eea}{\end{eqnarray}}
\newcommand{\be}{\begin{equation}}
\newcommand{\ee}{\end{equation}}
\newcommand{\Mbar}{{\overline{\mathcal{M}}}}
\newcommand{\bA}{{\mathbb{A}}}
\newcommand{\bP}{{\mathbb{P}}}
\newcommand{\bC}{{\mathbb{C}}}
\newcommand{\bF}{{\mathbb{F}}}
\newcommand{\bR}{{\mathbb{R}}}
\newcommand{\bZ}{{\mathbb{Z}}}
\newcommand{\cM}{{\mathcal{M}}}
\newcommand{\cD}{{\mathcal{D}}}
\newcommand{\cE}{{\mathcal{E}}}
\newcommand{\cO}{{\mathcal{O}}}
\newcommand{\cU}{{\mathcal{U}}}
\newcommand{\la}{{\langle}}
\newcommand{\ra}{{\rangle}}
\newcommand{\half}{{\frac{1}{2}}}
\newcommand{\rar}{\rightarrow}
\newcommand{\lrar}{\longrightarrow}
\begin{document}
\large
\setcounter{section}{0}

\allowdisplaybreaks

\title[Quantization of 
meromorphic Higgs bundles]
{Quantization of spectral  curves for 
meromorphic Higgs bundles through
topological recursion}

\author[O.\ Dumitrescu]{Olivia Dumitrescu}
\address{
Institut f\"ur Algebraische Geometrie\\
Fakult\"at f\"ur Mathematik und Physik\\
Leibniz Universit\"at Hannover\\
Welfengarten 1\\
30167 Hannover, Germany}
\email{dumitrescu@math.uni-hannover.de}

\author[M.\ Mulase]{Motohico Mulase}
\address{
Department of Mathematics\\
University of California\\
Davis, CA 95616--8633, U.S.A.}
\email{mulase@math.ucdavis.edu}

\begin{abstract}
A geometric quantization  using
the topological recursion is established for 
the compactified cotangent bundle
of a smooth projective curve
of an arbitrary genus. In this quantization,
the Hitchin spectral curve of a rank $2$ 
meromorphic Higgs bundle
on the base curve corresponds to 
a quantum curve, which is
   a Rees $D$-module on the base. 
   The topological recursion then gives an all-order
 asymptotic expansion of its solution,
 thus determining a state vector corresponding
 to the spectral curve as a meromorphic 
 Lagrangian. 
  We establish a generalization of the
topological recursion for a singular 
spectral curve.
We show that
the partial differential equation
version of the topological recursion  
automatically selects the normal ordering 
of the canonical coordinates, and determines the
unique quantization of the spectral curve.
The quantum curve thus
constructed has the semi-classical limit that
agrees with the original  spectral curve.
Typical examples of our construction
includes classical differential equations, such 
as Airy, Hermite, and Gau\ss\ hypergeometric
 equations. The topological recursion
 gives an asymptotic expansion of solutions to
these equations at their singular points, 
relating  Higgs bundles
and various quantum
invariants. 
\end{abstract}

\thanks{The first author is a member of 
the Simion Stoilow Institute of Mathematics of the 
Romanian Academy}

\subjclass[2010]{Primary: 14H15, 14N35, 81T45;
Secondary: 14F10, 14J26, 33C05, 33C10, 
33C15, 34M60, 53D37}

\keywords{Topological recursion; 
quantum curve; Hitchin spectral curve;
Higgs field; Rees D-module; geometric quantization; 
mirror symmetry; Airy function; Hypergeometric
functions; quantum invariants;
WKB approximation}

\maketitle

\tableofcontents

\section{Introduction}
\label{sect:intro}

\subsection{Overview}

The \textbf{topological recursion} of
\cite{EO1} was originally conceived as 
a computational mechanism to find the multi-resolvent
correlation functions of random matrices
\cite{CEO, E2004}. It 
 has been proposed that the topological 
 recursion is an effective
tool for defining a genus $g$ B-model
topological string theory 
 on a holomorphic curve (known as an
\textbf{Eynard-Orantin spectral curve}),
that should be the
mirror symmetric dual to the genus $g$
Gromov-Witten theory on the A-model side
\cite{BKMP, BKMP2, Marino}. 
This  correspondence
has been rigorously established   for several examples, 
most notably for an arbitrary
toric Calabi-Yau orbifold of  $3$ 
dimensions \cite{FLZ},
and many other enumerative geometry problems
\cite{BHLM, DoMan, DMSS, EMS, EO3, MZ}.

\textbf{Quantum curves} are introduced in
the physics literature (see for example,
\cite{ADKMV, DHS, DHSV, GS, Hollands})
as a device  to compactly encode 
the information of quantum invariants arising
in Gromov-Witten theory, Seiberg-Witten theory,
and knot theory. The \emph{semi-classical
limit} of a quantum curve is a holomorphic curve 
defining a B-model that is mirror dual
to the A-model for these  quantum invariants. 
Geometrically, a quantum curve also appears as
an $\hbar$-deformation of a  generalized
 Gau\ss-Manin  connection (or Picard-Fuchs
 differential equation)
on a curve, with regular and 
\emph{irregular} singularities.

Since both quantum curves and the topological 
recursion produce B-models  on a holomorphic
curve, it is natural to ask if they are related. 
Indeed, it was proposed 
by physicists
 \cite{DFM, GS} for the context of knot theory
 that the topological recursion would give a
 perturbative construction of 
 quantum curves. 
 So far such a relation is
 not fully understood
  in the  mathematical 
 examples of quantum curves
 constructed in \cite{BHLM, DMNPS,
 MSS, MS}.

The purpose of this paper is to establish a clear
geometric relation between quantum curves
and topological recursion for the
 \textbf{Hitchin spectral curves}
associated with Higgs bundles on a base curve 
$C$, with  
arbitrary \emph{meromorphic} Higgs fields.
Although the language of 
\textbf{geometric quantization} does not work
in this  algebraic geometry context, let us use it
for a moment as an analogy. Then 
the main result of this paper could be
understood as follows: \emph{the topological recursion
is a geometric quantization of}  
 $T^*C$. A Hitchin spectral curve is 
a (\emph{meromorphic}) Lagrangian in the
holomorphic symplectic manifold $T^*C$. Using
the topological recursion, we construct a
\emph{state vector}, which is a solution to 
the  Schr\"odinger equation on $C$ that is uniquely
determined by the spectral curve. 
The state vector is 
equivalent to a quantum curve in our setting,
as a Rees $D$-module on $C$. 
More precisely, 
we prove the following.

\begin{thm}[Main results]
Let $C$ be a smooth projective
curve of an arbitrary genus, and $(E,\phi)$ a
Higgs bundle of rank $2$  on $C$
with a meromorphic
Higgs field $\phi$. Denote by 
\be
\label{compact T*C}
\overline{T^*C}:= \bP(K_C\dsum \cO_C)
\overset{\pi}{\lrar} C
\ee
the compactified cotangent bundle of $C$
(see \cite{KS2013}),
which is a ruled surface on the base $C$.
Here, $K_C$ is the canonical sheaf.
The Hitchin spectral curve 
\begin{equation}
\label{spectral}
\xymatrix{
\Sigma \ar[dr]_{\pi}\ar[r]^{i} 
&\overline{T^*C}\ar[d]^{\pi}
\\
&C		}
\end{equation} 
 for a meromorphic
Higgs bundle is defined as the divisor 
of zeros on $\overline{T^*C}$
of the characteristic polynomial of
$\phi$:
\be
\label{char poly}
\Sigma = \left(\det(\eta - \pi^*\phi)\right)_0,
\ee
where $\eta \in H^0(T^*C, \pi^*K_C)$ is the 
tautological $1$-form on $T^*C$ extended as
a meromorphic $1$-form on the
compactification $\overline{T^*C}$.

\begin{itemize}
\item The integral topological recursion  of 
\cite{DM2014,EO1} is extended to the
 curve $\Sigma$, as \eqref{integral TR}. 
For this purpose,
we blow up $\overline{T^*C}$
several times
as in \eqref{blow-up} to construct the normalization
$\widetilde{\Sigma}$.
The construction of $Bl(\overline{T^*C})$ is
given in Definition~\ref{def:Bl}. It is the minimal 
resolution of the support $ \Sigma \cup C_\infty$
of the
\emph{total} divisor
 \be
 \label{total}
  \Sigma - 
2C_\infty = \left(\det(\eta - \pi^*\phi)\right)_0
-\left(\det(\eta - \pi^*\phi)\right)_\infty
 \ee
 of the characteristic polynomial, where
\begin{equation}
\label{C-infinity}
C_\infty := \bP(K_C\dsum\{0\}) = 
\overline{T^*C}\setminus T^*C
\end{equation}
is the divisor at infinity. Therefore,
in  $Bl(\overline{T^*C})$, 
the proper transform $\widetilde{\Sigma}$
of $\Sigma$
is smooth and does not intersect
with
the proper transform of $C_\infty$.
\begin{equation}
 \label{blow-up}
\xymatrix{
\widetilde{\Sigma} 
\ar[dd]_{\tilde{\pi}} \ar[rr]^{\tilde{i}}\ar[dr]^{\nu}&&Bl(\overline{T^*C})
\ar[dr]^{\nu}
\\
&\Sigma \ar[dl]_{\pi}\ar[rr]^{i} &&
\overline{T^*C}  \ar[dlll]^{\pi}
\\
C 		}
\end{equation} 

\item The genus of the normalization 
$\widetilde{\Sigma}$ is given by
$$
g(\widetilde{\Sigma}) = 2g(C)-1+\half \delta,
$$
where $\delta$ is the sum of the  number of cusp singularities
of $\Sigma$ 
and the ramification points 
of $\pi:\Sigma\lrar C$ (Theorem~\ref{thm:geometric genus formula}).

\item  The topological recursion 
thus generalized requires 
a globally defined meromorphic $1$-form $W_{0,1}$
on $\widetilde{\Sigma}$ and 
a symmetric meromorphic $2$-form $W_{0,2}$
on the product $\widetilde{\Sigma}
\times \widetilde{\Sigma}$ as the initial data.
We choose
\begin{equation}
\begin{cases}
\label{W0102}
W_{0,1} = \tilde{i}^*\nu^*\eta\\
W_{0,2} = d_1d_2 \log E_{\widetilde{\Sigma}},
\end{cases}
\end{equation}
where
$E_{\widetilde{\Sigma}}$ is a normalized 
Riemann prime form on 
${\widetilde{\Sigma}}$
(see \cite[Section~2]{DM2014}). The form $W_{0,2}$
depends only on the intrinsic geometry of 
the smooth curve $\widetilde{\Sigma}$. The
geometry of \eqref{blow-up}
is  encoded in $W_{0,1}$. 
The integral topological recursion
 produces a symmetric meromorphic $n$-linear
differential form $W_{g,n}(z_1,\dots,z_n)$
on $\widetilde{\Sigma}$ for every $(g,n)$
subject to $2g-2+n>0$ from the initial data 
\eqref{W0102}.

\item The residue evaluation of the 
integral topological recursion \eqref{integral TR}
is explicitly 
performed as in \cite[(4.7)]{DM2014}, 
and we obtain a differential 
recursion \eqref{differential TR}. 
It determines the free energy $F_{g,n}(z_1,\dots,z_n)$,
a symmetric meromorphic function on $\cU^n$
for $2g-2+n>0$, up to a constant.
Here, $\varpi:\cU\lrar \widetilde{\Sigma}$ is
the universal covering of  $\widetilde{\Sigma}$.

\item The quantum curve associated with 
the Hitchin spectral curve $\Sigma$ is defined
as a Rees $D$-module  
(Definition~\ref{def:qc}) on $C$. On each 
coordinate neighborhood $U\subset C$ with
coordinate $x$, a generator of the quantum curve
is given by 
$$
P(x,\hbar) = \left(\hbar\frac{d}{dx}\right)^2
-\tr \, \phi(x) \left(\hbar\frac{d}{dx}\right)
+ \det \phi(x).
$$
In particular, the \textbf{semi-classical limit}
of the quantum curve recovers the singular
spectral curve $\Sigma$, not its normalization
$\widetilde{\Sigma}$.

\item We construct
the \textbf{all-order WKB expansion} 
\be
\label{WKB}
\Psi(x,\hbar) = \exp\left(\sum_{m=0}^\infty
\hbar^{m-1} S_m(x)\right)
\ee
of a solution to the 
Schr\"odinger equation
\be
\label{Sch}
\left(\left(\hbar\frac{d}{dx}\right)^2
-\tr \, \phi(x) \left(\hbar\frac{d}{dx}\right)
+ \det \phi(x))\right)
\Psi(x,\hbar)=0,
\ee
near  each critical value of 
${\pi}:{\Sigma}\lrar C$,
in terms of the free energies.
Indeed, \eqref{Sch}
is equivalent to the  \textbf{principal 
specialization} of the differential recusion
\eqref{differential TR}. The equivalence  
is given by
\be
\label{Sm in Fgn}
S_m(x) = \sum_{2g-2+n=m-1} \frac{1}{n!}
F_{g,n}\big(z(x)\big),
\ee
where $F_{g,n}\big(z(x)\big)$ is the principal 
specialization of $F_{g,n}(z_1,\dots,z_n)$
evaluated at a local section $z=z(x)$ of
$\tilde{\pi}:\widetilde{\Sigma}\lrar C$.

\item The canonical ordering of the quantization of
the local functions on $T^*C$ is automatically
chosen in the process of the integration 
from \eqref{integral TR} to \eqref{differential TR}
and the principal specialization \eqref{Sm in Fgn}.
This selects the canonical ordering in 
\eqref{Sch}.

\end{itemize}
\end{thm}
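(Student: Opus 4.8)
The plan is to prove the ``Main results'' theorem by assembling the pieces that the body of the paper supplies, treating the theorem as an organizing umbrella rather than a single monolithic argument. Concretely, I would proceed item by item in the bulleted list, since each bullet is proved by a distinct technique, and the only genuine novelty in the statement is the coherence between them — that the normalization $\widetilde\Sigma$ carrying the topological recursion, the Rees $D$-module on $C$, and the WKB solution all encode the \emph{same} data.

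\textbf{Construction of the resolution and $\widetilde\Sigma$.} First I would make Definition~\ref{def:Bl} explicit: work locally on a coordinate chart $U\subset C$ with coordinate $x$, write the characteristic polynomial $\det(\eta-\pi^*\phi)$ in the fiber coordinate, and analyze the intersection of $\Sigma$ with $C_\infty$ where $\phi$ has poles. At a pole of $\phi$ of order $r$, the curve $\Sigma$ meets $C_\infty$ with a prescribed tangency, and at a cusp of $\Sigma$ there is a standard $A_{2k}$-type singularity; in both cases the minimal sequence of blow-ups separating $\widetilde\Sigma$ from the proper transform of $C_\infty$ is classical plane-curve resolution. I would check that after this process $\widetilde\Sigma$ is smooth and disjoint from $\widetilde{C_\infty}$, which is exactly what the diagram \eqref{blow-up} asserts. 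The genus formula $g(\widetilde\Sigma)=2g(C)-1+\tfrac12\delta$ then follows from Riemann--Hurwitz applied to $\tilde\pi:\widetilde\Sigma\to C$ (degree $2$, so $2g(\widetilde\Sigma)-2 = 2(2g(C)-2)+\deg R$) once $\deg R=\delta$ is identified with the stated count of cusps and ramification points; this is the content of Theorem~\ref{thm:geometric genus formula}, which I would cite.

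\textbf{Topological recursion and its integrated form.} With $\widetilde\Sigma$ smooth I would invoke the integral topological recursion of \cite{DM2014, EO1} verbatim, but now driven by the initial data \eqref{W0102}: $W_{0,1}=\tilde i^*\nu^*\eta$ and $W_{0,2}=d_1d_2\log E_{\widetilde\Sigma}$. The only point requiring care is that $W_{0,1}$ may have higher-order poles at the points of $\widetilde\Sigma$ lying over poles of $\phi$, so the residue analysis at the ramification points of $\tilde\pi$ must be checked to still produce well-defined symmetric meromorphic forms $W_{g,n}$ — this is where the blow-up was needed, to guarantee the ramification points of $\tilde\pi$ are simple and away from $\widetilde{C_\infty}$. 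Performing the residue evaluation exactly as in \cite[(4.7)]{DM2014} yields the differential recursion \eqref{differential TR}, whose primitive defines $F_{g,n}$ on $\cU^n$ up to an additive constant; the act of taking a primitive is precisely where an ordering ambiguity could enter, and I would record that the recursion fixes it.

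\textbf{Quantum curve, semi-classical limit, and WKB.} Next I would define the Rees $D$-module generated locally by $P(x,\hbar)=(\hbar\,d/dx)^2-\tr\phi(x)(\hbar\,d/dx)+\det\phi(x)$, verify the transition functions on overlaps $U\cap U'$ glue (using that $\phi$ is a global Higgs field valued in $K_C$, so $\tr\phi$ and $\det\phi$ transform as a $1$-form and a quadratic differential respectively, which is exactly the transformation law making $P$ a well-defined second-order operator twisted appropriately), and observe that setting $\hbar\to0$ with $\hbar\,d/dx\to\eta$ recovers $\eta^2-\tr\phi\cdot\eta+\det\phi=\det(\eta-\pi^*\phi)$, i.e. the singular $\Sigma$ and not $\widetilde\Sigma$ — the normalization map has been ``forgotten'' because the operator lives on $C$. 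Finally, plugging the WKB ansatz \eqref{WKB} into \eqref{Sch} and separating powers of $\hbar$ gives a recursion for $S_m(x)$; I would verify by induction on $m$ that $S_m(x)=\sum_{2g-2+n=m-1}\frac1{n!}F_{g,n}(z(x))$ solves it, the base cases $S_0'=$ (a root of the characteristic equation, matching $W_{0,1}$) and $S_1$ (a half-logarithmic derivative, matching the standard $F_{0,2}$/$F_{1,1}$ contribution) being the anchor, and the inductive step being the principal specialization of \eqref{differential TR}. The normal-ordering claim then follows automatically: since the left side \eqref{Sch} is literally the $\hbar$-expansion one obtains, the operator $P$ with its coefficients in the order written is forced, with no freedom to insert $[\hbar\,d/dx,\ \cdot\ ]$ corrections.

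\textbf{Main obstacle.} The hard part is not any single bullet but the compatibility at the ramification and polar points: one must ensure that the local WKB solution near a critical value of $\pi:\Sigma\to C$, built from free energies defined upstairs on $\widetilde\Sigma$ via the primitive of \eqref{differential TR}, genuinely reassembles into a solution of the $\hbar$-connection downstairs on $C$ whose classical limit is the \emph{singular} curve. In other words the technical crux is controlling how the resolution $\nu$ and normalization $\tilde\pi$ interact with the principal specialization \eqref{Sm in Fgn} — showing that the potential discrepancies (the exceptional divisors of $Bl(\overline{T^*C})$, the higher-order poles of $W_{0,1}$) contribute only to the undetermined constants and not to the $x$-dependence of $S_m$. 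This is precisely the step where the careful choice of $W_{0,2}$ as depending only on the intrinsic geometry of $\widetilde\Sigma$, together with $W_{0,1}$ encoding all the blow-up data, pays off, and I would expect to spend most of the proof there.
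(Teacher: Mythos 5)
Your proposal is correct and follows essentially the same route as the paper: the theorem is assembled exactly as you describe, with the resolution and genus statement coming from the case-by-case blow-up analysis plus Riemann--Hurwitz (Theorem~\ref{thm:geometric genus formula}, Theorem~\ref{thm:nonsingular}), the recursion run on $\widetilde{\Sigma}$ with the initial data \eqref{W0102} as in \cite{DM2014}, and the quantization obtained by matching the $\hbar$-expansion of \eqref{Sch} with the principal specialization of \eqref{differential TR}. The ``main obstacle'' you single out is precisely where the paper works: it constructs the local normalization coordinate $z$ with $x=c_Q(z^2)z^2$ and $y=zx^\mu-\tfrac12 a_1(x)$ near each branch point of $\tilde{\pi}$, identifies $\Omega$ there, and then reduces the remaining verification to the argument of \cite{DM2014}.
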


\begin{rem}
Although $\overline{T^*C}$ is not a holomorphic
symplectic manifold, in the analogy of
geometric quantization mentioned above,
our quantization is similar to a holomorphic
quantization of $T^*C$, where the fiber
coordinate is quantized to $\hbar\frac{d}{dx}$.
A Hitchin spectral curve is a \emph{meromorphic} 
Lagrangian, and corresponds via 
the topological recursion to a 
state vector $\Psi(x,\hbar)$ of
\eqref{WKB}.
\end{rem}

\begin{rem}
The constant ambiguity in the 
symmetric function $F_{g,n}$ is 
reflected in a factor $\exp(\sum \hbar^{m-1} c_m)$
multiplied to $\Psi(x,\hbar)$ of \eqref{WKB},
where $c_m$ is an arbitrary constant. 
Therefore, our method does not determine the
$\hbar$ dependence of the solution to 
\eqref{Sch}.
\end{rem}

\begin{rem}
The current paper is a 
generalization of \cite{DM2014}.
In the process of establishing a geometric
theory of topological recursion and quantum 
curves, we have discovered 
in \cite{DM2014} that the
topological recursion of \cite{EO1} can be naturally
generalized to the Hitchin spectral curves
for holomorphic 
Higgs bundles defined 
on a smooth projective  curve $C$ of 
genus $g(C)\ge 2$. We have then
showed   that the Hitchin spectral 
curve for an $SL(2,\bC)$-Higgs bundle
is quantizable, and that the topological 
recursion gives an asymptotic expansion of 
a holomorphic solution to the quantum curve
\eqref{Sch} with $\tr\, \phi = 0$.
\end{rem}

\begin{rem}
The singularities of the quantum curve, which are
  regular and 
irregular singular points 
of a differential equation
\eqref{Sch} on the base
curve $C$, are analyzed by the geometry of the 
Hitchin
spectral curve $\Sigma$ 
(Theorem~\ref{thm:qc singularity}). For example,
the number of resolutions required to desingularize
$\Sigma\cup C_\infty$ at $P\in \Sigma$ is
always $\lceil r\rceil$ if $\pi(P)$ is an irregular 
singular point of class $r-1$. 
\end{rem}

\begin{rem}
Already several mathematical 
examples of quantum curves have been
 rigorously constructed  for enumerative
geometry problems, such as Catalan numbers
and their generalizations,
simple and double Hurwitz numbers and their
variants, and 
 Gromov-Witten invariants of a point,
the projective line, and a few toric
Calabi-Yau threefolds \cite{BHLM, DoMan, 
DMNPS, MSS, MS, Zhou1, 
Zhou2}. In knot theory, a  
quantum curve is the same as a $q$-holonomic
operator $\widehat{A}$ that quantizes
the \textbf{A-polynomial} of a knot and characterizes
the corresponding colored Jones polynomial
\cite{Gar, GarLe}.
\end{rem}

\begin{rem}
Another aspect of quantum curves lies in 
its relation to \textbf{non-Abelian Hodge
correspondence}.
A quantum curve  is an 
$\hbar$-connection on the base curve $C$, and
the  Higgs field is
  recovered as its \emph{classical limit}
 $\hbar \rar 0$. 
 The non-Abelian Hodge
correspondence with irregular singular points
has been studied extensively  both in 
mathematics and physics, starting 
from the fundamental papers
\cite{BB, B2001} and to more recent ones, including
\cite{B2012,W2008, W2008-2}. 
 \end{rem}

Our current paper is motivated by the following 
simple question:
\emph{If quantum curves are truly fundamental 
objects, then where do we see them most commonly,
in particular, in classical mathematics?} 
The answer we propose in this paper is that 
the classical differential equations, such as
the Airy, Hermite, and Gau\ss\ hypergeometric
differential equations, are natural 
examples of our construction 
of the quantum curves that are associated
with stable 
meromorphic
Higgs bundles defined over the projective
line $\bP^1$. The topological recursion then
 gives an all-order asymptotic expansion of 
 their solutions, connecting Higgs bundles
  to  the world of quantum invariants.

  Once we study these concrete classical examples,
  it becomes plausible that the base curve $C$ of
  the Higgs bundle and a spectral 
  curve $\Sigma\subset \overline{T^*C}$
  are \emph{moduli spaces} of certain geometries. 
  For example, 
  in a particular case of the Gau\ss\ 
  hypergeometric equations considered
  in Sections~\ref{classical examples}
  and \ref{sub:Gauss}, the base curve
  is actually
  $\Mbar_{0,4}\isom \bP^1$. The spectral 
  curve for this example is the moduli space of
  elliptic curves, together with the two 
  eigenvalues of the 
  classical limit of 
    the  \textbf{Gau\ss-Manin connection} 
  \cite{Manin} that
  characterizes the 
  periods of elliptic curves. 
  
  More precisely,
  for every
  $x\in \cM_{0,4}$, we consider the elliptic
  curve $E(x)$ ramified over $\bP^1$ at four points
  $\{0,1,x,\infty\}$, and its two periods 
   given by the elliptic integrals
  \cite{KZ}
  \begin{equation}
\label{elliptic periods}
\omega_1(x)=
\int_1^\infty \frac{ds}{\sqrt{s(s-1)(s-x)}},
\qquad
\omega_2(x) = 
\int_x^1 \frac{ds}{\sqrt{s(s-1)(s-x)}}.
\end{equation}
The quantum curve in this case is 
  an $\hbar$  \textbf{-deformed 
Gau\ss-Manin connection}
\begin{equation}
\label{Gauss-Manin}
\nabla^\hbar_{GM} =\hbar d -
\begin{bmatrix}
&\frac{1}{x}\\ \\
-\frac{1}{4(x-1)}&-\frac{2x-1}{x(x-1)}
+\frac{\hbar}{x}
\end{bmatrix}dx
\end{equation}
in the trivial bundle $\cO_{\Mbar_{0,4}}\dsum
\cO_{\Mbar_{0,4}}$ of rank $2$
over $\Mbar_{0,4}$. Here, $d$ denotes
the exterior differentiation acting on the 
local sections of this trivial bundle. 
 The restriction $\nabla^1_{GM}$ 
 of the connection at $\hbar=1$ is equivalent to
the Gau\ss-Manin connection that
characterizes 
   the two periods of \eqref{elliptic periods},
   and the Higgs field is the classical limit
   of the connection matrix
   at $\hbar\rar 0$:
   \begin{equation}
   \label{Gauss-Manin-Higgs}
\phi =    \begin{bmatrix}
&\frac{1}{x}\\ \\
-\frac{1}{4(x-1)}&-\frac{2x-1}{x(x-1)}
\end{bmatrix}dx.
\end{equation}
  The spectral curve 
  $\Sigma \subset \overline{T^*\Mbar_{0,4}}$ 
  as a moduli space consists
  of the data 
  $\big(E(x), \a_1(x), \a_2(x)\big)$,
  where $\a_1(x)$ and $\a_2(x)$ are the two
  eigenvalues of the  Higgs field $\phi$. 
  The spectral curve 
  $\Sigma \subset \overline{T^*\Mbar_{0,4}} = \bF^2$ 
  as a divisor in the Hirzebruch surface 
  is  determined by the characteristic 
  equation 
  \be
  \label{char intro}
  y^2 + \frac{2x-1}{x(x-1)}y
  + \frac{1}{4x(x-1)} = 0
  \ee
  of the Higgs field. Geometrically,
  $\Sigma$ is a singular rational curve with one
  ordinary double point at $x=\infty$.
  As we see in the later sections, the quantum curve
  is a \textbf{quantization} of the characteristic equation
  \eqref{char intro}
  for the eigenvalues $\a_1(x)$ and $\a_2(x)$ 
 of $\phi(x)$. It is an $\hbar$-deformed
   \textbf{Picard-Fuchs equation}
   $$
   \left(\left(\hbar \frac{d}{dx}\right)^2 
   + \frac{2x-1}{x(x-1)} \left(\hbar \frac{d}{dx}\right)
   + \frac{1}{4x(x-1)}\right) \omega_i(x,\hbar) =0,
   $$
    and its semi-classical limit agrees with
   the singular spectral curve
  $\Sigma$. As a second order differential equation,
  the quantum curve has two independent
  solutions corresponding
  to the two eigenvalues. At $\hbar=1$, 
  these solutions are
  exactly the two periods $\omega_1(x)$ and
  $\omega_2(x)$ of the Legendre family of 
  elliptic curves $E(x)$. 
  The topological recursion 
  produces  asymptotic
  expansions of these periods  as
  functions in $x\in \Mbar_{0,4}$, at which the
  elliptic
  curve $E(x)$ degenerates to a nodal rational 
  curve.
  
  \begin{rem}
  Although we do not deal with quantum curves
  associated with knots (cf. \cite{GS}) in our
  current paper, there a
  spectral curve is the $SL(2,\bC)$-character variety 
  of the
  knot complement in the $3$-sphere $S^3$.
  Thus the spectral curve is again a moduli space,
  this time the moduli of
  flat $SL(2,\bC)$-connections on the knot 
  complement.
  \end{rem}

When we deal with a singular spectral curve 
$\Sigma\subset \overline{T^*C}$,
the key question
is how to relate the singular curve with smooth ones.
In terms of the Hitchin fibration, a 
singular spectral
curve corresponds to a degenerate Abelian variety
in the family.
There are two different approaches to 
this question:
\begin{enumerate}
\item Deform $\Sigma$ locally 
in the base of the Hitchin 
fibration to a family 
of non-singular
curves, and study the 
quantization associated with 
this deformation family.
\item Blow up  $\overline{T^*C}$ and obtain
the  resolution of singularities
$\widetilde{\Sigma}$
of the singular spectra curve $\Sigma$. 
Then construct the
quantum curve for $\Sigma$ using the geometry of
$\widetilde{\Sigma}$.
\end{enumerate}
In this paper we will pursue the second path, and 
give a construction of a quantum curve using the
geometric
information of the blow-up
\eqref{blow-up}.

In the Higgs bundle 
context, a quantum curve 
 is a Rees $D$-module over the Rees ring
$\widetilde{\cD_C}$
defined by the canonical filtration of $\cD_C$
(see for example, \cite{G}), such that its
semi-classical limit coincides with the
Hitchin spectral curve of a
meromorphic Higgs bundle on  $C$.
 Here, $\cD_C$
denotes the sheaf of linear ordinary differential
operators on $C$. A $\widetilde{\cD_C}$-module 
is a particular $\hbar$-deformation 
family of  $\cD_C$-modules.
Suppose a Rees $\widetilde{\cD_C}$-module
is written locally as
$$
\cM(U) = \widetilde{\cD_C}(U)\big/
\widetilde{\cD_C}(U)\cdot P(x,\hbar)
$$
on an open 
disc $U\subset C$ with a local coordinate $x$,
where
 $P(x,\hbar)\in \widetilde{\cD_C}(U)$ 
 is a linear ordinary differential operator 
 depending on the deformation
parameter $\hbar$. This operator
 then characterizes, by an equation
\begin{equation}
\label{PPsi=0}
P(x,\hbar) \Psi(x,\hbar) = 0,
\end{equation}
 the \emph{partition function} $\Psi(x,\hbar)$ of a 
topological quantum field theory 
on a `space' that is considered to be the \emph{mirror
dual} to the spectral curve.  
 The physics theories appearing
in this way are 
related to quantum topological 
invariants and  geometric enumeration
problems.
 The variable $x$ of the base curve $C$ 
is  usually 
the parameter of  generating functions of the
quantum invariants that are considered in the 
theory, and the generating functions determine
a particular asymptotic expansion
of an analytic solution $\Psi(x,\hbar)$ of
\eqref{PPsi=0} around its  singularity.

\subsection{Classical examples}
\label{classical examples}

Riemann and Poincar\'e 
worked on
the interplay between algebraic geometry 
of  curves in a ruled surface and 
the asymptotic expansion
   of an analytic solution 
   to a differential equation defined on the 
base curve of the ruled surface.  
The theme of the current paper lies exactly on this
link, looking at the classical subject from a 
new point of view.

Let us  recall the definition of 
regular and irregular singular points  of 
a second order differential
equation.

\begin{Def}
\label{def:regular and irregular} 
Let
\begin{equation}
\label{second}
\left(\frac{d^2}{dx^2}+a_1(x)\frac{d}{dx}+a_2(x)
\right)\Psi(x) = 0
\end{equation}
be a second order differential equation
defined around a neighborhood of $x=0$ on a
small disc $|x|< \epsilon$ with meromorphic 
coefficients $a_1(x)$ and $a_2(x)$ with poles  
at $x=0$. Denote by $k$ (reps.\ $\ell$) the 
order of the pole of 
$a_1(x)$ (resp.\ $a_2(x)$) at $x=0$. 
If $k \le 1$  and $\ell\le 2$, then \eqref{second}
has a \textbf{regular singular point} at $x=0$.
Otherwise, consider the \emph{Newton polygon}
 of the order of poles of the coefficients of
 \eqref{second}. It is the upper part of
 the convex hull of three
 points $(0,0), (1, k), (2,\ell)$. As a convention,
 if $a_j(x)$ is identically $0$, then
 we assign $-\infty$ as its pole order. Let $(1,r)$
 be the intersection point of
 the Newton polygon and the line $x=1$.
 Thus
 \begin{equation}
 \label{irregular class}
r= \begin{cases}
 k \qquad 2k\ge \ell,\\
\frac{\ell}{2} \qquad 2k\le \ell.
 \end{cases}
 \end{equation}
 The differential equation \eqref{second} 
 has an \textbf{irregular singular point of class}
 $r-1$ at $x=0$ if $r>1$.
\end{Def}

To illustrate the scope of interrelations among the
geometry of meromorphic 
Higgs bundles, their spectral curves, the singularities
of  quantum curves, $\hbar$-connections,
and the quantum invariants, let us tabulate 
five examples here (see Table~\ref{tab:examples}).
The differential operators of these equations 
are listed on the third column. 
In the first three rows, the quantum curves are
examples of 
classical differential equations known as 
 Airy,  Hermite,  the Gau\ss\ hypergeometric
equations. 
The fourth and the fifth
rows are added to show that it is \emph{not}
the singularity of the spectral curve that
determines the singularity
of the quantum curve.
In each example, the Higgs bundle $(E,\phi)$ 
we are 
considering consists of the base curve $C=\bP^1$
and the trivial vector bundle $E=\cO_{\bP^1}
\dsum \cO_{\bP^1}$ of rank $2$ on ${\bP^1}$.

\begin{table}[htb]
\label{tab:examples}
  \centering
  
  \begin{tabular}{|c|c|c|}

\hline 

Higgs Field & Spectral Curve  & Quantum Curve
\tabularnewline
\hline \hline
$\begin{bmatrix}
&1\\
x
\end{bmatrix}dx$ & 
$\begin{matrix}
y^2-x=0\\
w^2-u^5=0\\
\Sigma = 2C_0+5F\\
p_a=2,p_g=0
\end{matrix}
$ & 
$\begin{matrix}
\text{Airy}\\
\left(\hbar\frac{d}{dx}\right)^2 - x\\
\text{Class $\frac{3}{2}$ irregular singularity}\\
\text{at $\infty$}
\end{matrix}
$
 \tabularnewline
\hline 
$\begin{bmatrix}
&1\\
-1&-x
\end{bmatrix}dx$ & 
$\begin{matrix}
y^2+xy+1=0\\
w^2-uw+u^4=0\\
\Sigma = 2C_0+4F\\
p_a=1, p_g=0
\end{matrix}$ &
$\begin{matrix}
\text{Hermite}\\
\left(\hbar\frac{d}{dx}\right)^2 +x\hbar\frac{d}{dx}
+1\\
\text{Class $2$ irregular singularity}\\
\text{at $\infty$}
\end{matrix}$
  \tabularnewline
\hline 
 $\begin{bmatrix}
&\frac{1}{x}\\ \\
\frac{1}{4(1-x)}&\frac{2x-1}{x(1-x)}
\end{bmatrix}dx$ & 
$\begin{matrix}
y^2
+\frac{2x-1}{x(x-1)}y+\frac{1}{4x(x-1)}=0
\\
w^2+4(u-2)uw
\\
-4u^2(u-1)=0
\\
\Sigma = 2C_0+4F\\
p_a=1,p_g=0
\end{matrix}
$ &
$\begin{matrix}
\text{Gau\ss\ Hypergeometric}\\
\left(\hbar\frac{d}{dx}\right)^2
+\frac{2x-1}{x(x-1)}
\hbar\frac{d}{dx}+\frac{1}{4x(x-1)}\\
\text{Regular singular points}\\
\text{at $x=0,1,\infty$}
\end{matrix}
$
  \tabularnewline
\hline 

 $\begin{bmatrix}
&1\\
-\frac{1}{x+1}&-1
\end{bmatrix}dx$ & 
$
\begin{matrix}
y^2+y+\frac{1}{x+1}=0
\\
w^2-u(u+1)w\\
+u^3(u+1)=0
\\
\Sigma = 2C_0+4F\\
p_a=1,p_g=0
\end{matrix}
$ &
$\begin{matrix}
\left(\hbar\frac{d}{dx}\right)^2+ \hbar\frac{d}{dx}
+\frac{1}{x+1}\\
\text{Regular singular point at $x=-1$}\\
\text{and a class $1$ irregular singularity}\\
\text{at $x=\infty$}
\end{matrix}
$

  \tabularnewline
\hline 

$\begin{bmatrix}
&1\\
\frac{1}{x^2-1}&-\frac{2x^2}{x^2-1}
\end{bmatrix}dx$ & 
$
\begin{matrix}
(x^2-1)y^2+2x^2y-1=0
\\
\text{non-singular}
\\
\Sigma = 2C_0+4F\\
p_a=p_g=1
\end{matrix}
$ &
$\begin{matrix}
\left(\hbar\frac{d}{dx}\right)^2+
2\frac{x^2}{x^2-1} \hbar\frac{d}{dx}
-\frac{1}{x^2-1}\\
\text{Regular singular points at $x=\pm1$}\\
\text{and a class $1$ irregular singularity}\\
\text{at $x=\infty$}
\end{matrix}
$

  \tabularnewline
\hline 

\end{tabular}
\bigskip

  \caption{Examples of quantum curves.}
\end{table}

The first column of the table shows the Higgs field
$\phi:E\lrar E\tensor K_{\bP^1}(2)$. 
Here, $x$ is the affine coordinate
of $\bP^1\setminus \{\infty\}$. 
Since our vector bundle is trivial, the non-Abelian
Hodge correspondence is simple in each case. 
Except for the Gau\ss\ hypergeometric case, 
it is 
given by
\begin{equation}
\label{hbar-connection intro}
\nabla^\hbar = \hbar d-\phi,
\end{equation}
where $d$ is the exterior differentiation operator
acting on  sections of $E$. The form
of \eqref{hbar-connection intro} is valid because
of our choice, $(0, dx)$, as the 
first row of
the Higgs field.

For the third example of a
Gau\ss\ hypergeometric equation,
we use a particular choice of parameters
so that 
the $\hbar$-connection becomes  an $\hbar$-deformed
Gau\ss-Manin connection
 of \eqref{Gauss-Manin}.
This is a singular connection with simple poles
at $0,1,\infty$, 
and has an explicit $\hbar$-dependence in
the connection matrix. 
The Gau\ss-Manin connection $\nabla^1_{GM}$ at 
$\hbar = 1$ is equivalent to
the Picard-Fuchs equation
that characterizes the periods \eqref{elliptic periods}
of the Legendre family of elliptic curves $E(x)$
defined by the cubic equation
\begin{equation}
\label{Legendre}
t^2 = s(s-1)(s-x), \qquad x\in \cM_{0,4} = 
\bP^1\setminus \{0,1,\infty\}.
\end{equation}

The second column gives the spectral curve 
of the Higgs bundle $(E,\phi)$. 
Since the Higgs fields have 
poles, the spectral curves are no longer contained in 
the cotangent bundle $T^*\bP^1$. We need  
 the compactified cotangent bundle
\begin{equation*}
\overline{T^*\bP^1} = \bP(K_{\bP^1}\dsum
\cO_{\bP^1}) = \bF_2,
\end{equation*}
which is a Hirzebruch surface.
The parameter $y$
is the fiber coordinate of the cotangent line
$T^*_x\bP^1$. 
The first line of the second column is the
equation of the spectral curve in the $(x,y)$ 
affine coordinate of $\bF_2$.
All but the last 
example produce a singular
spectral curve. 
Let $(u,w)$ be a coordinate system
 on another affine
chart of $\bF_2$ defined by
\begin{equation}
\label{uw}
\begin{cases}
x = {1}/{u}\\
ydx = v du, \qquad w = 1/v.
\end{cases}
\end{equation}
The singularity of  $\Sigma$ 
in the $(u,w)$-plane is given by the 
second line of the second column. 
The third line of the second column gives
$\Sigma\in \NS(\bF_2)$ as an element of 
the N\'eron-Severy group of 
$\bF_2$. Here, $C_0$ is the class of the 
zero-section of $T^*\bP^1$, and $F$ represents
the fiber class of $\pi:\bF_2\lrar \bP^1$. 
We also give the arithmetic and geometric
genera of the spectral curve.

A solution $\Psi(x,\hbar)$
of \eqref{PPsi=0} for the first 
example is  given
by the \textbf{Airy function}
\begin{equation}
\label{Airy}
Ai(x,\hbar) =\frac{1}{2\pi} \hbar^{-\frac{1}{6}}
\int_{-\infty} ^\infty
\exp\left({\frac{ipx}{\hbar^{2/3}}}+{i\frac{p^3}{3}}
\right)dp,
\end{equation}
which is an entire function in $x$ for $\hbar\ne 0$.
We will perform the all-order WKB 
analysis  in this paper, and  give
a closed formula for each  term of the
WKB expansion. 
The topological recursion  produces 
the asymptotic expansion
\begin{equation}
\label{Airy expansion}
Ai(x,\hbar) =
\exp\left(\sum_{g=0}^\infty\sum_{n=1}^\infty
\frac{1}{n!}\hbar^{2g-2+n}F_{g,n}^\Airy (x)
\right)
\end{equation}
at $x=\infty$, where
\begin{equation}
\label{Airy Fgn}
F_{g,n}^{\text{Airy}}(x) 
:= \frac{(-1)^n}{2^{2g-2+n}}\cdot
x^{-\frac{(6g-6+3n)}{2}}
\sum_{\substack{d_1+\dots+d_n\\
=3g-3+n}}
\la \tau_{d_1}\cdots \tau_{d_n}\ra_{g,n}
\prod_{i=1}^n (2d_i-1)!! ,
\end{equation}
and the coefficients
$$
\la \tau_{d_1}\cdots \tau_{d_n}\ra_{g,n} =
\int_{\Mbar_{g,n}}
\psi_1 ^{d_1}\cdots \psi_{n}^{d_n}
$$
are the cotangent class intersection 
numbers on the moduli space 
$\Mbar_{g,n}$ of stable curves of 
genus $g$ with $n$ non-singular
marked points. The cases for $(g,n)=(0,1)$ and 
$(0,2)$ require a subtle care, which will be explained
in Section~\ref{sect:Airy}. The expansion 
coordinate $x^{\frac{3}{2}}$ of \eqref{Airy Fgn}
indicates the class of the irregular singularity of
the Airy differential equation.

The solutions to the second example are given by 
confluent hypergeometric functions, such as
${}_1F_1\left(\frac{1}{2\hbar};
\half;-\frac{x^2}{2\hbar}\right),
$
where 
\begin{equation}
\label{Kummer}
{}_1F_1(a;c;z) :=
\sum_{n=0}^\infty \frac{(a)_n}{(c)_n}\;
\frac{z^n}{n!}
\end{equation}
is the 
\textbf{Kummer confluent hypergeomtric function},
and the 
\textbf{Pochhammer symbol} $(a)_n$
is defined by
\begin{equation}
\label{Poch}
(a)_n :=a (a+1)(a+2)\cdots (a+n-1).
\end{equation}
For $\hbar>0$, 
the topological recursion 
determines the asymptotic 
expansion of a particular entire solution known as
 a \textbf{Tricomi confluent 
hypergeomtric function} 
\begin{multline*}
\Psi^\Catalan(x,\hbar) 
\\
= 
\left(-\frac{1}{2\hbar}\right)^{\frac{1}{2\hbar}}
\left(
\frac{\Gamma[\half]}{\Gamma[\frac{1}{2\hbar}
+\half]}
{}_1F_1\left(\frac{1}{2\hbar};
\half;-\frac{x^2}{2\hbar}\right)
+\frac{\Gamma[-\half]}{\Gamma[\frac{1}{2\hbar}]}
\sqrt{-\frac{x^2}{2\hbar}}
{}_1F_1\left(\frac{1}{2\hbar}
+\half;
\frac{3}{2};-\frac{x^2}{2\hbar}\right)
\right).
\end{multline*}
The expansion is given in the form
\begin{equation}
\begin{aligned}
\label{Catalan Psi expansion intro}
\Psi^{\Catalan}(x,\hbar)
&=
\left(\frac{1}{x}\right)^{\frac{1}{\hbar}}
\sum_{n=0}^\infty 
\frac{\hbar^n \left(\frac{1}{\hbar}\right)_{2n}}
{(2n)!!}\cdot  \frac{1}{x^{2n}}
\\
&=
\exp\left(
\sum_{g=0}^\infty 
\sum_{n=1}^\infty \frac{1}{n!}\hbar^{2g-2+n}
F_{g,n}^\Catalan(x,\dots,x)
\right).
\end{aligned}
\end{equation}
Here,
\begin{equation*}
F_{g,n}^\Catalan(x_1,\dots,x_n)
=
\sum_{\mu_1,\dots,\mu_n>0}
\frac{C_{g,n}(\mu_1,\dots,\mu_n)}
{\mu_1\cdots\mu_n}
\prod_{i=1}^n x_i^{-\mu_i}
\end{equation*}
is the generating function of the
\emph{generalized} Catalan numbers 
$C_{g,n}(\mu_1,\dots,\mu_n)$ of
\cite{DMSS, WL},
which count
the number of connected cellular graphs 
(i.e., the $1$-skeletons of cell decompositions)
of a compact surface of genus $g$ with 
$n$ labeled vertices of degrees 
$(\mu_1,\dots,\mu_n)$, together with 
an arrow attached to one of the incident 
half-edges at each vertex. For more
detail of  cellular graphs, 
we refer to \cite{DMSS,MS, WL}.
The expansion variable $x^2$ in
\eqref{Catalan Psi expansion intro} 
indicates the class of 
irregularity of the Hermite differential equation 
at $x=\infty$. The cases for $(g,n)=(0,1)$ and $(0,2)$
require again a special treatment, as we will
see later.

\begin{rem}
The authors are grateful to Peter Zograf for 
bringing
\cite{WL} to
their attention. 
The recursion for $C_{g,n}(\mu_1,\dots,\mu_n)$ 
(\cite[Theorem 3.1]{MS}, which is also equivalent
to \cite[Theorem 1.1]{DMSS}), is 
exactly the same as \cite[Equation 6]{WL}.
The topological recursion \eqref{integral TR}
for the generalized Catalan numbers derived in 
\cite[Theorem 1.2]{DMSS} is the Laplace 
transform of \cite[Equation 6]{WL}.
\end{rem}

\begin{rem}
Leonid Chekhov has shown that 
the asymptotic expansion \eqref{Catalan Psi expansion intro} can  also be derived from the matrix model
of \cite{ACNP}, by simply setting the matrix 
size equal to $1$.  The principal specialization 
often takes this effect in $1$-Hermitian matrix models.
\end{rem}

The Hermite
differential equation
 becomes simple for $\hbar=1$,
and we have the asymptotic expansion
\begin{multline}
\label{error asymptotic}
i\sqrt{\frac{\pi}{2}}e^{-\half x^2}\left[
1-\erf\left(\frac{ix}{\sqrt{2}}\right)\right]
= \sum_{n=0}^\infty \frac{(2n-1)!!}{x^{2n+1}}
\\
=\exp\left(
\sum_{2g-2+n\ge -1}\frac{1}{n!}
\sum_{\mu_1,\dots,\mu_n>0}
\frac{C_{g,n}(\mu_1,\dots,\mu_n)}
{\mu_1\cdots\mu_n}
\prod_{i=1}^n x^{-(\mu_1+\cdots +\mu_n)}
\right).
\end{multline}
Here, 
$
\erf(x) :=\frac{2}{\sqrt{\pi}} \int_0^x e^{-z^2} dz
$
is the   Gau\ss\ error function.
\begin{figure}[htb]
\centerline{\epsfig{file=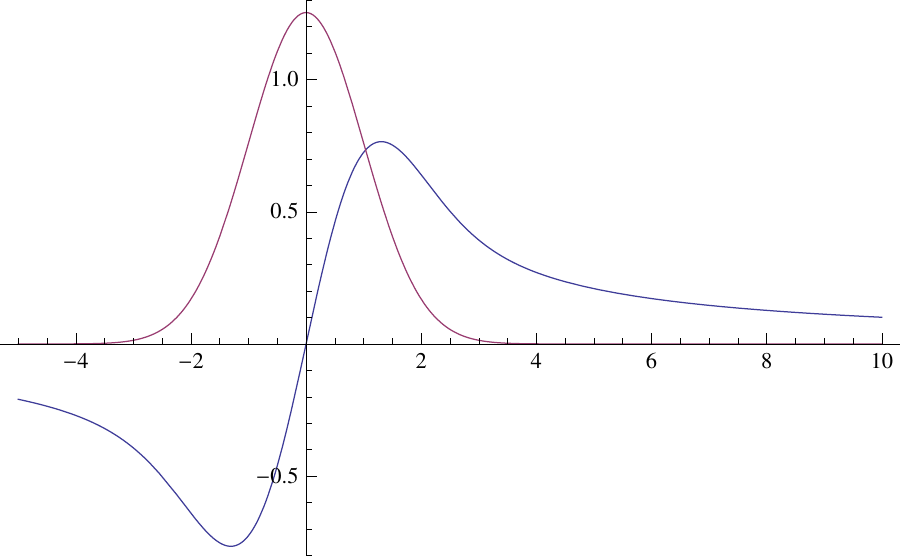, width=3in}}
\caption{The imaginary part and the real part
of $\Psi^{\Catalan}(x,1)$. For $x>>0$, the imaginary
part dies down, and only the real part has a
non-trivial asymptotic
expansion. Thus \eqref{error asymptotic} 
is a series with real coefficients. 
}
\label{fig:erf}
\end{figure}

One of the
two independent solutions to the third
example, the Gau\ss\ hypergeometric
equation, that is holomorphic
 around $x=0$ is given by 
\begin{equation}
\label{Gauss 0}
\Psi^\Gauss(x,\hbar)
= {}_2F_1
\left(-\frac{\sqrt{(h-1)(h-3)}}
   {2h}+\frac{1}{h}-
   \frac{1}{2},\frac{\sqrt{(h-1)(h-3)}}
   {2h}+\frac{1}{h}-\frac{1}{2};
   \frac{1}{h};x\right),
\end{equation}
where 
\begin{equation}
\label{Gauss}
{}_2F_1(a,b;c;x):=
\sum_{n=0}^\infty \frac{(a)_n(b)_n}{(c)_n}\;
\frac{x^n}{n!}
\end{equation}
is the \textbf{Gau\ss\ hypergeometric
function}.
The topological recursion
calculates the B-model genus
expansion of the
periods of the Legendre family of elliptic curves
\eqref{Legendre}
at the point where the 
elliptic curve degenerates to a nodal
rational curve.
For example, the procedure
applied to the spectral curve
$$
y^2
+\frac{2x-1}{x(x-1)}y+\frac{1}{4x(x-1)}=0
$$
with a choice of  
$$
\eta=\frac{-(2 x-1) - \sqrt{3x^2 - 3 x + 1}}
{2x (x-1)} dx,
$$
which is an eigenvalue $\a_1(x)$ of the Higgs
field $\phi$,
gives a genus expansion at $x=0$:
\begin{equation}
\label{Gauss expansion}
\Psi^\Gauss (x,\hbar)
=\exp\left(\sum_{g=0}^\infty
\sum_{n=1}^\infty
\frac{1}{n!}\hbar^{2g-2+n}F_{g,n}^\Gauss(x)\right).
\end{equation}
At $\hbar=1$, we have a topological recursion
expansion of the period 
$\omega_1(x)$ defined in \eqref{elliptic periods}:
\begin{equation}
\label{period expansion}
\frac{\omega_1(x)}{\pi} = \Psi^\Gauss (x,1)
=\exp\left(\sum_{g=0}^\infty
\sum_{n=1}^\infty
\frac{1}{n!}F_{g,n}^\Gauss(x)\right).
\end{equation}

A subtle point we notice here is that 
while the Gau\ss\ hypergeometric equation has
regular singular points at $x=0,1,\infty$, the
Hermite equation has an irregular singular point
of class $2$
at $\infty$. The spectral curve of each case has
an ordinary double point at $x=\infty$. But the 
crucial difference lies in the intersection of 
the spectral curve $\Sigma$ with the divisor
$C_\infty$.
For the Hermite case we have $\Sigma\cdot C_\infty
= 4$ and the intersection occurs all at once at
$x=\infty$.
For the Gau\ss\ hypergeometric case, the
intersection $\Sigma\cdot C_\infty = 4$
occurs once each at $x=0,1$, and twice at $x=\infty$. 
This \emph{confluence} of regular singular
points is the source of the irregular singularity
 in the Hermite differential equation.
 
 The fourth row indicates an example of 
 a quantum curve that has one regular singular
 point at $x=-1$ and one irregular singular point
 of class $1$ at $x = \infty$. The spectral curve
 has an ordinary double point at $x=\infty$, the same
 as the Hermite case. As 
 Figure~\ref{fig:spectral 2 and 4} shows, 
 the class of the irregular singularity at $x=\infty$
 is determined by how the spectral curve
 intersects with $C_\infty$. 
 
 \begin{figure}[htb]
\centerline{\epsfig{file=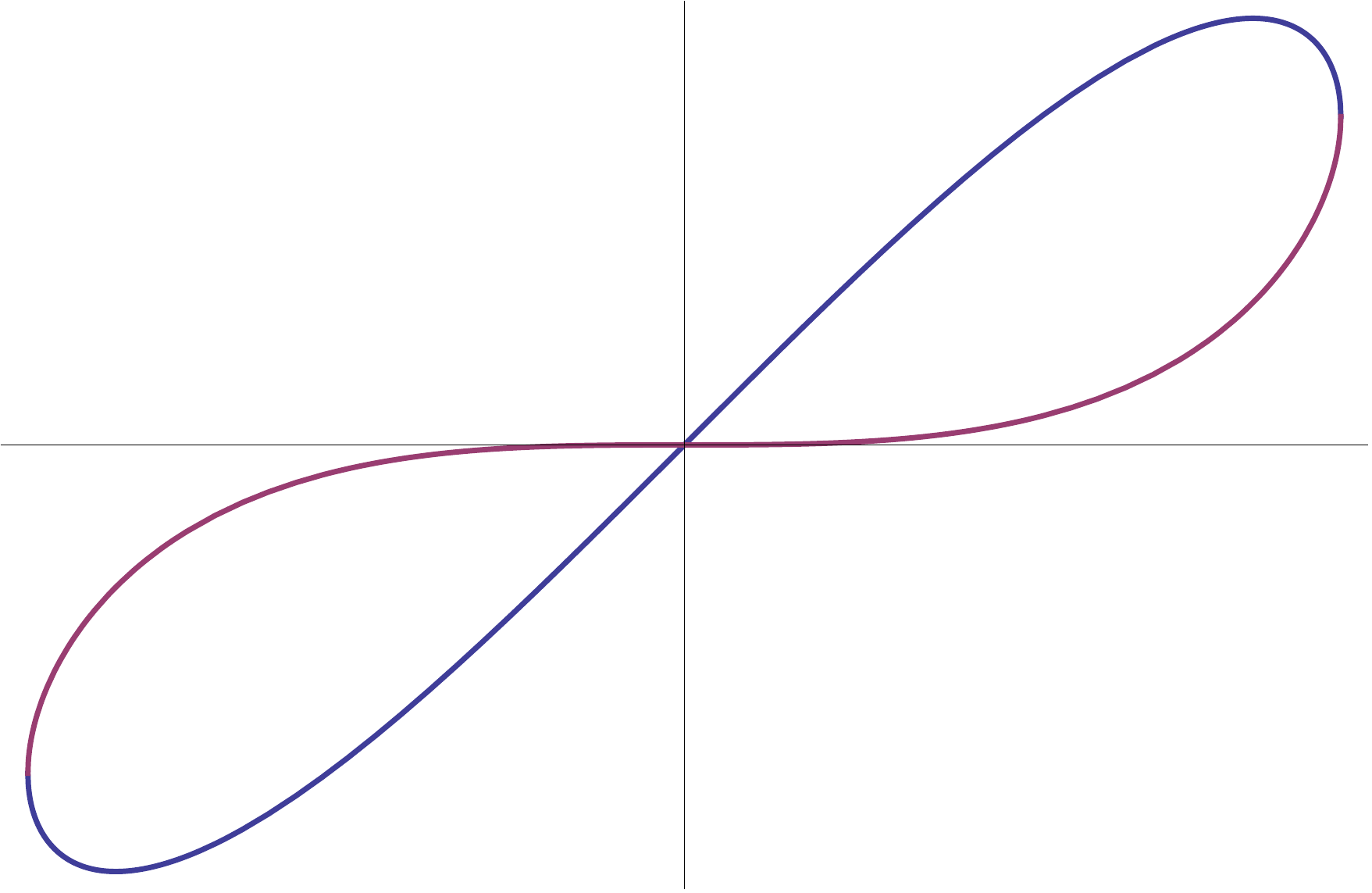, height=1.5in}\qquad
\epsfig{file=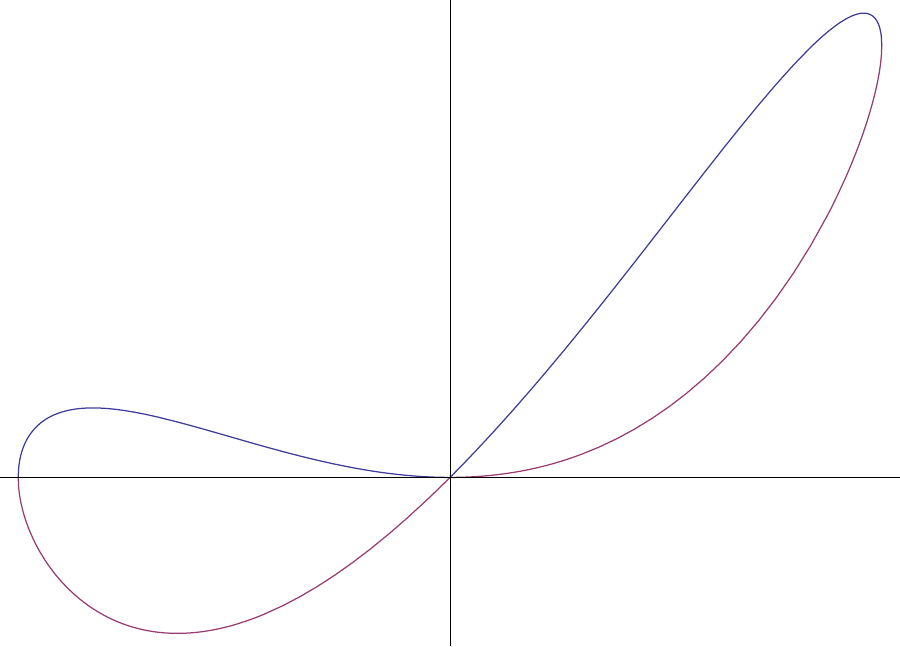, height=1.5in}}
\caption{The spectral curves  of the second
and the fourth examples.
The horizontal 
line is the divisor $C_\infty$, and the vertical
line is the fiber class $F$ at 
$x=\infty$. The spectral curve
intersects with $C_\infty$ a total of four times.
The curve on the right has a triple intersection 
at $x=\infty$, while the one on the left intersects
all at once.}
\label{fig:spectral 2 and 4}
\end{figure}

The existence of the irregular singularity in 
the quantum curve associated with a spectral
curve has nothing to do with the singularity
of the spectral curve. The fifth example shows
a non-singular spectral curve of genus $1$
(Figure~\ref{fig:spectral 5}), for which the quantum
curve has a
class $1$ irregular singularity at $x=\infty$.

\begin{figure}[htb]
\centerline{\epsfig{file=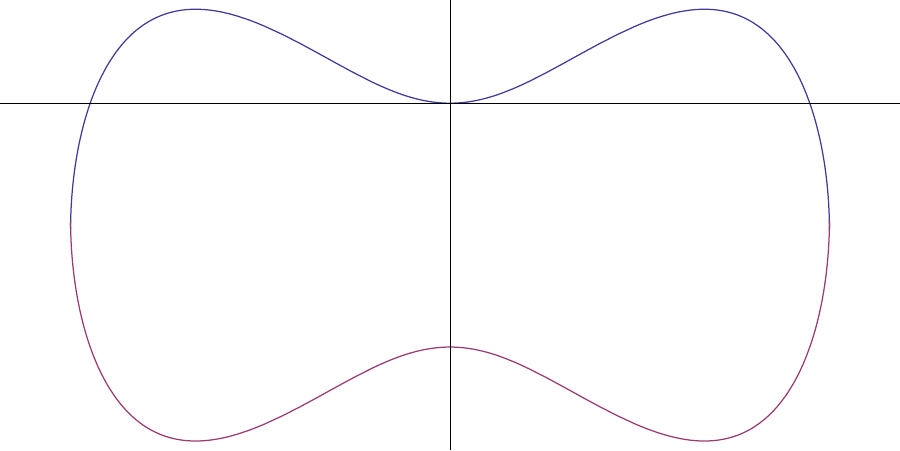, height=1.2in}}
\caption{The spectral curve  of the fifth
 example, which is non-singular. The corresponding
 quantum curve has two regular singular points
 at $x=\pm1$, and a class $1$ irregular singular 
 point at $x=\infty$.
}
\label{fig:spectral 5}
\end{figure}

 The paper is organized as follows.
 The general structure of the theory is explained 
 using the Airy function as an example in 
 Section~\ref{sect:Airy}.
 The notion of quantum curves as Rees $D$-modules
 quantizing  Hitchin spectral curves
 is presented in Section~\ref{sect:QC Rees}.
 Since our topological recursion depends
solely on the
geometry of \eqref{blow-up}, the information
of $\Sigma$ and $\widetilde{\Sigma}$, such 
as their arithmetic genera,
becomes important. We will give the genus formulas
in Sections~\ref{sect:spectral} and \ref{sect:divisors}.
 In Section~\ref{sect:spectral} we study
 the geometry of the Hitchin spectral curves
 associated with rank $2$ meromorphic Higgs
 bundles. 
 We give the genus formula
 for the normalization $\widetilde{\Sigma}$
 in terms of the characteristic polynomial of 
 the Higgs field $\phi$. 
 A more systematic treatment of the spectral curve
 and its desingularization is given in 
 Section~\ref{sect:divisors}. 
   In Section~\ref{sect:qc construction}, which is 
  the heart of our paper,
  we prove the main theorem. 
  Two more examples, Hermite differential 
  equations and Gau\ss\ hypergeometric
  differential equations, are studied in 
  Section~\ref{sect:classical}.

\section{A walk-through of the simplest example}
\label{sect:Airy}

Before going into the full generality, 
let us present 
the simplest example of our construction. With this
example we can illustrate the relation between 
a Higgs bundle, the compactified cotangent bundle
of a curve, a quantum curve, a classical differential 
equation, non-Abelian Hodge correspondence, 
and the quantum invariants that the quantum curve
captures.

As a spectral curve, we take the algebraic curve
$
\Sigma \subset \bF_2 = \bP\left(K_{\bP^1}
\dsum \cO_{\bP^1}\right) = \overline{T^*\bP^1}
$
embedded in the Hirzebruch surface 
with the defining equation
\begin{equation}
\label{Airy xy}
y^2-x = 0.
\end{equation}
Here, $x$ is the coordinate of the affine line
$\bA^1=\bP^1\setminus \{\infty\}$, and $y$ is
the fiber coordinate of the cotangent bundle
$T^*\bP^1 \subset \bF^2$ over $\bA^1$.
The Hirzebruch surface  is the
natural compactification of the cotangent bundle
$T^*\bP^1$, which is the
total space of the canonical bundle $K_{\bP^1}$.
We denote by 
$\eta \in H^0(T^*\bP^1, \pi^* K_{\bP^1})$
the tautological $1$-form associated with the 
projection $\pi:T^*\bP^1\lrar \bP^1$. It is 
expressed as $\eta = ydx$ in terms of 
the affine coordinates.
The holomorphic symplectic form 
on $T^* \bP^1$ is given by $-d\eta = dx\wedge dy$.
The $1$-form $\eta$ extends to $\bF_2$ 
as a meromorphic differential form and defines
 a divisor
\begin{equation}
\label{(eta)F2}
(\eta) = C_0-C_\infty,
\end{equation}
where $C_0$ is the zero-section of $T^*\bP^1$, and 
$C_\infty$  the section at infinity of 
$\overline{T^*\bP^1}$.
The Picard group $\Pic(\bF_2)$ of the Hirzebruch 
surface is generated by the class $C_0$ and a fiber
class $F$ of $\pi$.

Although \eqref{Airy xy} is a perfect 
parabola in the affine plane, it has a quintic cusp
singularity at $x=\infty$. 
Let $(u,w)$ be a coordinate on another affine
piece of $\bF_2$ defined by \eqref{uw}.
Then $\Sigma$ in the $(u,w)$-plane is given by
\begin{equation}
\label{Airy uw}
w^2 = u^5.
\end{equation}
The expression of $\Sigma$ as an element of 
$\Pic(\bF_2)$ is thus given by
$
\Sigma  = 2C_0 + 5 F.$
Define a stable Higgs pair
$(E,\phi)$ with
$E=\cO_{\bP^1}\dsum \cO_{\bP^1}$
and 
\begin{equation}
\label{Airy Higgs}
\phi = \begin{bmatrix}
& 1\\
x
\end{bmatrix}dx:
E\lrar E\tensor K_{\bP^1}(2) = E.
\end{equation}
Here, we choose a meromorphic 
$1$-form 
$xdx\in H^0\big(\bP^1,K_{\bP^1}
(2)\big)
$
that has a simple zero at $0\in \bP^1$ and 
a pole of order $3$ at $\infty \in \bP^1$. Up to a
constant factor,
 there is only one such  differential
$xdx = -du/u^3$.
The spectral curve $\Sigma$ 
of $(E,\phi)$ is  given by 
the characteristic equation
\begin{equation}
\label{char}
\det(\eta-\pi^*\phi) =\eta^2 -\pi^*\tr(\phi)+\pi^*
\det(\phi)= 0
\end{equation}
in $\bF_2$.
The non-Abelian Hodge correspondence
applied to $\phi$
determines a singular $\hbar$-connection
\cite{A, MS2002}
\begin{equation}
\label{NAH-Airy}
\nabla^\hbar = 
\hbar d -  \begin{bmatrix}
& 1\\
x
\end{bmatrix}dx
\end{equation}
on the trivial bundle $E=\cO_{\bP^1}^{\dsum 2}$
over $\bP^1$.

The \textbf{quantization} procedure
that we will explain in this 
paper associates the following differential equation
to the spectral curve $\Sigma$:
\begin{equation}
\label{Airy qc}
\left(\left(\hbar\frac{d}{dx}\right)^2-
x\right)
\Psi(x,\hbar) = 0.
\end{equation}
The solution $\Psi$ gives rise to a flat section
$\begin{bmatrix}\Psi\\ \Psi'\end{bmatrix}$
of \eqref{NAH-Airy}, where $'$ denotes the 
$x$ differentiation.
The differential operator
\begin{equation}
\label{Airy P}
P(x,\hbar):=
\left(\hbar\frac{d}{dx}\right)^2-
x
\end{equation}
quantizing \eqref{Airy xy} is an example
of what we call a
quantum curve.
Reflecting
the fact \eqref{Airy uw} that $\Sigma$ has a 
quintic cusp singularity at $x=\infty$, 
\eqref{Airy qc} has an 
\emph{irregular} singular point 
of \emph{class} $\frac{3}{2}$ at $x=\infty$.
This number $\frac{3}{2}$ 
indicates how the asymptotic expansion 
of the solution looks like. 
Indeed, any non-trivial solution has 
an essential singularity at $\infty$.
We note that every solution of 
\eqref{Airy qc} is an entire function
for any value of $\hbar \ne 0$. Define
\begin{equation}
\label{Airy an}
 a_{3n} =a_0\cdot
\frac{\prod_{j=1}^n (3j-2)}{(3n)!},
\qquad
 a_{3n+1} =a_1\cdot
\frac{\prod_{j=1}^n (3j-1)}{(3n+1)!},
\qquad
 a_{3n+2} = 0,
\end{equation}
for $n\ge 0$.
Then 
\begin{equation}
\label{Airy entire}
\Psi(x,\hbar):=\sum_{n=0}^\infty a_n 
\left(\frac{x}{\hbar^{{2}/{3}}}\right)^n
\end{equation}
gives an arbitrary solution to \eqref{Airy qc},
which is entire. 
The coefficients \eqref{Airy an} are of no
particular interest. 

What our quantization procedure
tells us is a different, and more interesting, story. 
Applying our main result of this paper, 
we construct a particular 
\emph{all-order} asymptotic expansion
 of this 
entire solution
\begin{equation}
\label{Psi expansion}
\Psi(x,\hbar) = \exp F(x,\hbar),\qquad 
F(x,\hbar) := \sum_{m=0}^\infty
\hbar^{m-1} S_m(x),
\end{equation}
valid for 
$|\Arg(x)|<\pi$, 
 and $\hbar >0$. Here, the first two terms of
 the asymptotic expansion are given by 
\begin{align}
\label{S0-Airy}
S_0(x) &= \pm \frac{2}{3} x^{\frac{3}{2}},
\\
\label{S1-Airy}
S_1(x) &= -\frac{1}{4} \log x.
\end{align}
Although the \emph{classical limit}
$\hbar \rar 0$
of \eqref{Airy qc} does not make  sense
under the expansion  \eqref{Psi expansion},
the \emph{semi-classical limit} through the
\emph{WKB} analysis
\begin{equation}
\label{WKB-Airy}
\left[e^{-S_1(x)}e^{-\frac{1}{\hbar}S_0(x)}
\left(\hbar^2 \frac{d^2}{dx^2}-x\right)
e^{\frac{1}{\hbar}S_0(x)}e^{S_1(x)}\right]
\exp\left(\sum_{m=2}^\infty
\hbar^{m-1} S_m(x)\right) = 0
\end{equation}
has a well-defined limit 
$\hbar \rar 0$.  The result 
is 
$
S_0'(x)^2 = x,
$
which gives 
\eqref{S0-Airy}, and also \eqref{Airy xy}
by defining $dS_0 = \eta$. This process is 
called the semi-classical limit.
The vanishing of the
$\hbar$-linear terms of \eqref{WKB-Airy}
is $2S_0'(x)S_1'(x) + S_0''(x) = 0$, which gives
\eqref{S1-Airy} above.

\begin{figure}[htb]
\centerline{\epsfig{file=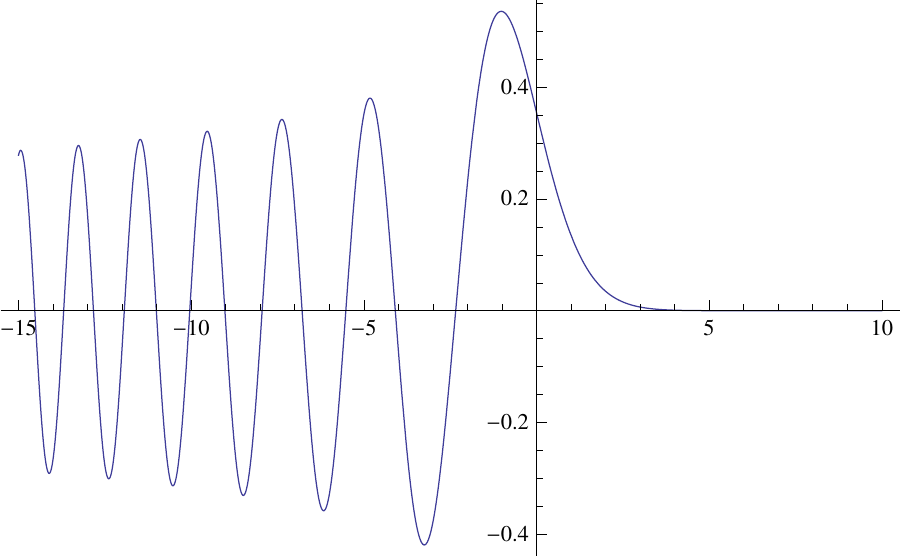, width=2in}}
\caption{The Airy function}
\label{fig:Airy}
\end{figure}

The solution we are
talking about is the Airy function \eqref{Airy}
for the choice of $S_0(x) 
= - \frac{2}{3} x^{\frac{3}{2}}$.
This solution corresponds to \eqref{Airy entire} 
with the initial condition
$$
a_0 = \frac{1}{3^{\frac{2}{3}}\Gam(\frac{2}{3})},
\qquad 
a_1 = - \frac{1}{3^{\frac{1}{3}}\Gam(\frac{1}{3})}.
$$
The surprising discovery of Kontsevich \cite{K1992}
is that $S_m(x)$ for 
$m\ge 2$ has the following \emph{closed}
formula:
\begin{equation}
\label{Airy Sm}
S_m(x) := \sum_{2g-2+n=m-1} \frac{1}{n!}
F_{g,n}^{\text{Airy}}(x),
\end{equation}
\begin{equation}
\label{Airy principal}
F_{g,n}^{\text{Airy}}(x) 
:= \frac{(-1)^n}{2^{2g-2+n}}\cdot
x^{-\frac{(6g-6+3n)}{2}}
\sum_{\substack{d_1+\dots+d_n\\
=3g-3+n}}
\la \tau_{d_1}\cdots \tau_{d_n}\ra_{g,n}
\prod_{i=1}^n (2d_i-1)!! .
\end{equation}
Although \eqref{Airy Sm} is not 
a generating function of all intersection numbers,
as we will show in the subsequent 
sections, the quantum curve
\eqref{Airy xy} alone actually determines
every intersection  number 
$\la \tau_{d_1}\cdots \tau_{d_n}\ra_{g,n}$. 
This mechanism
is the \emph{differential recursion equation} 
of \cite{DM2014}, based on the theory
of integral topological recursion of \cite{EO1}, 
which computes \emph{free energies}
\begin{equation}
\label{Airy free energy}
F_{g,n}^{\text{Airy}}(x_1,\cdots,x_n) 
:= \frac{(-1)^n}{2^{2g-2+n}}
\sum_{\substack{d_1+\dots+d_n\\
=3g-3+n}}
\la \tau_{d_1}\cdots \tau_{d_n}\ra_{g,n}
\prod_{i=1}^n \frac{(2d_i-1)!!}{\sqrt{x_i}^{2d_i+1}}
\end{equation}
as a function in $n$ variables
from $\Sigma$ through the process of 
blow-ups of $\bF_2$.

Let us now give a detailed procedure for this 
example. We start with the spectral curve $\Sigma$
of
\eqref{Airy xy}. Our goal is to 
come up with \eqref{Airy qc}.
The first step is to blow up $\bF_2$ and 
to construct \eqref{blow-up}.
The discriminant of the 
defining equation \eqref{char}
of  the spectral curve is
$$
-\det(\phi) = x(dx)^2 = \frac{1}{u^5}(du)^2.
$$
It has a simple zero at $x=0$ and a pole of order
$5$ at $x=\infty$.
The \emph{Geometric Genus Formula}
\eqref{pg} tells us that 
$\widetilde{\Sigma}$ is a non-singular curve
of genus $0$, i.e., a $\bP^1$, 
after blowing up $\lfloor \frac{5}{2}\rfloor = 2$
times.
The center of blow-up is $(u,w)=(0,0)$ for 
the first time. Put $w=w_1 u$,
and denote by $E_1$ the exceptional
divisor of the first blow-up. The proper transform
of $\Sigma$ for this blow-up,
$w_1^2 = u^3$,   has a cubic
cusp singularity, so we blow up again at
the singular point. Let $w_1=w_2 u$, 
and denote by $E_2$ the exceptional 
divisor created by the second blow-up. 
The self-intersection of the proper transform
of $E_1$ is $-2$. 
We then obtain the 
desingularized curve $\Sigma_{\min}$,
locally given by $w_2 ^2 = u$. 
The proof of 
Theorem~\ref{thm:geometric genus formula}
also tells us that $\Sigma_{\min}\lrar \bP^1$ 
is ramified at 
two points. Choose the affine coordinate 
 $t=2 w_2$ 
 of the exceptional divisor  added  at the second
 blow-up. Our  choice of the constant factor
 is to make the formula the same as in 
 \cite{DMSS}.
We have
\begin{equation}
\label{Airy xy in t}
\begin{cases}
x = \frac{1}{u} = \frac{1}{w_2^2}
 = \frac{4}{t^2}\\
y = -\frac{u^2}{w} = - \frac{u^2}{w_2u^2}
= -\frac{2}{t}.
\end{cases}
\end{equation}
In the $(u,w)$-coordinate, we see that the parameter
$t$ is  a normalization parameter of the 
quintic cusp singularity:
$$
\begin{cases}
u = \frac{t^2}{4}\\
w = \frac{t^5}{32}.
\end{cases}
$$
Note that $\Sigma_{\min}$ intersects transversally 
with the proper transform of $C_\infty$. 
We blow up once again at this intersection, and
denote by $\widetilde{\Sigma}$ the proper transform
of $\Sigma_{\min}$. The blow-up space
$Bl(\bF^2)$ is the result of $3=\lceil \frac{5}{2}
\rceil$ times blow-ups of the Hirzebruch surface.

Now we apply the differential  
recursion \eqref{differential TR}
to the geometric data
\eqref{blow-up} and \eqref{W0102}.
We claim that the integral topological recursion of
\cite{EO1} for the geometric data we 
are considering
now is exactly the same as the integral
topological 
recursion of \cite[(6.12)]{DMSS} applied to 
the curve \eqref{Airy xy in t} 
\emph{realized as a plane parabola} 
in $\bC^2$.
This is because our integral topological 
recursion \eqref{integral TR} has two 
residue contributions, one each from $t=0$ and 
$t=\infty$. As proved in \cite[Section~6]{DMSS},
the integrand on the right-hand side 
of the integral recursion formula
\cite[(6.12)]{DMSS} does not have any 
pole at $t=0$. Therefore, the residue contribution 
from this point is  $0$. The
differential recursion is obtained by 
deforming the contour of integration to enclose
only poles of the differential forms $W_{g,n}$.
Since $t=0$ is a regular point, the two methods
have no difference.

The $W_{0,2}$ of \eqref{W0102}
is simply $\frac{dt_1\cdot dt_2}{(t_1-t_2)^2}$
because $\widetilde{\Sigma}\isom \bP^1$. 
Since $t$ of \eqref{Airy xy in t}
is a normalization coordinate, 
we have
$$
W_{0,1} = \tilde{i}^*\nu^*(\eta) = y(t)dx(t)
= \frac{16}{t^4},
$$
in agreement of \cite[(6.8)]{DMSS}.
Noting that the solution to the integral
topological recursion 
is unique  from the 
initial data, we conclude that 
$$
d_1\cdots d_n F_{g,n}^\Airy \big(x(t_1),
\dots,x(t_n)\big) = W_{g,n}.
$$
By setting the constants of integration by
integrating from $t=0$ for the
differential recursion equation, we obtain
the expression \eqref{Airy free energy}.
Then its principal specialization 
gives \eqref{Airy principal}.
The equivalence of the differential 
 recursion and the quantum curve
equation Theorem~\ref{thm:main} then proves
\eqref{Airy qc} with the expression of 
\eqref{Psi expansion} and \eqref{Airy Sm}.

In this process, what is truly amazing is that
the single differential equation 
\eqref{Airy qc}, which is our 
quantum curve in this case, knows everything 
about the free energies \eqref{Airy free energy}.
This is because we can recover 
the spectral curve $\Sigma$ from the quantum 
curve. Then the procedures we need to apply, 
the blow-ups and the differential recursion equation,
are canonical. Therefore, we do recover
\eqref{Airy free energy} as explained above.

It is surprising to see that  a simple  entire function
\eqref{Airy entire}
contains so much geometric information.
Our expansion \eqref{Psi expansion} is 
an expression of an entire function viewed
from its essential singularity. We can extract 
 rich information of the solution by
restricting the region where the asymptotic
expansion is valid.
If we consider \eqref{Psi expansion} only as a
formal expression in $x$ and $\hbar$, then 
we cannot see how the coefficients are 
related to quantum invariants. 
The 
topological recursion \cite{EO1}
is a key to connect the two worlds: 
the world of quantum invariants, and the  world
of holomorphic functions and differentials. 
This relation is also knows as a
\textbf{mirror symmetry}, or in analysis,
 simply as the \emph{Laplace transform}.
The intersection  numbers 
$\la \tau_{d_1}\cdots \tau_{d_n}\ra_{g,n}$ 
belong to 
the $A$-model, while the spectral 
curve $\Sigma$ of \eqref{Airy xy}
and free energies 
belong to the $B$-model.
We consider \eqref{Airy free energy} as
an example of
the Laplace transform, playing the role of
mirror symmetry \cite{DMSS}.

\section{Quantum curves for Higgs bundles}
\label{sect:QC Rees}

In this section, we give the definition of quantum
curves. Let $C$ be a non-singular projective
algebraic curve defined over $\bC$. 
The sheaf $\cD_C$ of differential operators on $C$ is 
the subalgebra generated by the anti-canonical 
sheaf $K_C^{-1}$ and the structure sheaf
$\cO_C$ in the $\bC$-linear endomorphism algebra
$\cE nd_{\bC}(\cO_C)$. Here, $K_C^{-1}$ acts
on $\cO_C$ as holomorphic vector fields, and
$\cO_C$ acts on itself by multiplication. 
Locally every element of $\cD_C$ is written 
as 
$$
\cD_C\owns P(x) = \sum_{\ell=0}^r a_\ell(x) 
\left(\frac{d}{dx}\right)^{r-\ell}, \qquad 
a_\ell(x)\in \cO_C
$$
for some $r\ge 0$. For a fixed $r$, we introduce
the filtration by order of differential operators into
$\cD_C$ as follows:
$$
F_r\cD_C = \left.\left\{P(x) = \sum_{\ell=0}^r a_\ell(x) 
\left(\frac{d}{dx}\right)^{r-\ell}\right|
a_\ell(x)\in \cO_C\right\}.
$$
The \emph{Rees} ring $\widetilde{\cD_C}$ 
is defined by
\begin{equation}
\label{Rees}
\widetilde{\cD_C} = \bigoplus_{r=0}^\infty
\hbar^r F_r\cD_C \subset 
\bC[\hbar]\tensor_\bC \cD_C.
\end{equation}
An element of $\widetilde{\cD_C}$ on a coordinate
neighborhood $U\subset C$ can be written 
\emph{uniquely} as
\begin{equation}
\label{local Rees P}
P(x,\hbar) = \sum_{\ell=0}^r a_\ell(x) 
\left(\hbar\frac{d}{dx}\right)^{r-\ell}
\end{equation}
(see \cite[Section~1.5]{MS2002}).

\begin{Def}[Quantum curve]
\label{def:qc}
A \textbf{quantum curve} is the 
Rees $\widetilde{\cD_C}$-module
\begin{equation}
\label{Rees D-module}
\widetilde{\cM}
= \bigoplus_{r=0}^\infty
\hbar^r F_r \cM
\end{equation}
associated with 
a filtered
$\cD_C$-module $(\cM,F_r)$ 
defined on $C$, with
the compatibility
$$
F_a \cD_C\cdot F_b \cM\subset 
F_{a+b}\cM.
$$
\end{Def}

Let 
$$
D = \sum_{j=1}^n m_j p_j, \qquad m_j >0
$$ 
be an effective 
divisor on $C$.  
The point set $\{p_1,\dots,p_n\}\subset C$ is 
the support of $D$. A \emph{meromorphic
Higgs bundle} with poles at $D$ is
a pair $(E,\phi)$ consisting of an algebraic
 vector bundle
$E$ on $C$ and a Higgs field
\begin{equation}
\label{phi}
\phi:E\lrar K_C(D)\tensor_{\cO_C}E.
\end{equation}
Since the cotangent bundle 
$$
T^*C = \Spec\left(
\Sym\left(K_C^{-1}\right)\right)
$$ is the total 
space of $K_C$, we have the tautological $1$-form
$
\eta\in H^0(T^*C,\pi^*K_C)
$
on $T^*C$ coming from the  projection
$$
\begin{CD}
T^*C @<<< \pi^*K_C
\\
@V{\pi}VV
\\
C@<<< K_C.
\end{CD}
$$
The natural holomorphic symplectic form
of $T^*C$ is given by $-d\eta$. The 
\textbf{compactified cotangent bundle}
of $C$ is a
ruled surface defined by
\begin{equation}
\label{compact}
\overline{T^{*}C}:=
\bP\left(K_C\dsum \cO_C\right)
=\Proj \left(
\bigoplus_{n=0}^\infty\left(
K_C^{-n}\cdot I^0\dsum K_C^{-n+1}\cdot
I \dsum \cdots\dsum K_C^0\cdot I^n\right)
\right),
\end{equation}
where $I$ represents $1\in \cO_C$ being 
considered as a degree $1$ element. 
The divisor at infinity $C_\infty$ of 
\eqref{C-infinity}
is reduced in the
ruled surface and supported on the 
subset $\bP\left(K_C\dsum \cO_C\right) 
\setminus T^*C$. The tautological form $\eta$ 
extends on $\overline{T^{*}C}$ as a 
meromorphic $1$-form with simple poles
along $C_\infty$. Thus the divisor of $\eta$ in
$\overline{T^{*}C}$ is
given by 
\begin{equation}
\label{eta divisor}
(\eta) = C_0-C_\infty,
\end{equation}
where $C_0$ is the zero section of $T^*C$.

The relation between the sheaf $\cD_C$
and the geometry of the compactified cotangent
bundle $\overline{T^*C}$ is the following.
First we have
\begin{equation}
\label{cotangent}
 \Spec\left(
\bigoplus_{m=0}^\infty F_m\cD_C\big/
F_{m-1}\cD_C\right)
=\Spec\left(
\bigoplus_{m=0}^\infty K_C^{-m}\right)
=T^*C.
\end{equation}
Let us denote by $gr_m \cD_C = 
F_m\cD_C\big/
F_{m-1}\cD_C$.
By writing $I = 1\in H^0(C,\cD_C)$, we then 
have
\begin{multline}
\label{compactified cotangent}
 \Proj\left(
\bigoplus_{m=0}^\infty \left(gr_m\cD_C\cdot I^0
\dsum 
gr_{m-1}\cD_C \cdot I \dsum 
gr_{m-2}\cD_C \cdot I^{\tensor 2} \dsum
\cdots \dsum gr_0\cD_C \cdot I^{\tensor m}
\right)\right) 
\\
= \overline{T^*C}.
\end{multline}

\begin{Def}[Spectral curve] A \textbf{spectral curve}
of degree $r$
is a divisor $\Sigma$ in $\overline{T^*C}$ such
that the projection $\pi:\Sigma \lrar C$ defined by
the restriction
\begin{equation*}
\xymatrix{
\Sigma \ar[dr]_{\pi}\ar[r]^{i} 
&\overline{T^*C}\ar[d]^{\pi}
\\
&C		}
\end{equation*} 
is a finite morphism of degree $r$.
The \textbf{spectral curve  of a Higgs 
bundle} $(E,\phi)$ is the divisor of zeros
\begin{equation}
\label{spectral curve}
\Sigma = \left(\det(\eta - \pi^*\phi)\right)_0
\end{equation}
on $\overline{T^*C}$ of the characteristic 
polynomial $\det(\eta - \pi^*\phi)$. Here,
$$
\pi^*\phi : \pi^* E \lrar \pi^*\left(K_C(D)\right)
\tensor_{\cO_{\bP\left(K_C\dsum \cO_C\right)}}
\pi^*E.
$$
\end{Def}

\begin{rem}
The Higgs field
$\phi$ is holomorphic on 
$C\setminus \supp    (D)$. Thus we can define the
divisor of zeros
$$
\Sigma^\circ=
\left( \det\left(\eta-
\pi^*\left(\phi|_{C\setminus \supp    (D)}
\right)\right)\right)_0
$$
of the characteristic
polynomial
on $T^*(C\setminus \supp    (D))$. The spectral curve 
$\Sigma$ is the complex topology closure of 
$\Sigma^\circ$
with respect to the compactification
\begin{equation}
\label{compactification}
T^*(C\setminus \supp    (D))\subset \overline{T^*C}.
\end{equation}
\end{rem}

A left $\cD_C$-module $\cE$ on $C$ is naturally
an $\cO_C$-module with a $\bC$-linear
 integrable connection
$\nabla:\cE \lrar K_C\tensor_{\cO_C} \cE$. 
The construction goes as follows:
\begin{equation}
\label{D-module -> flat connection}
\begin{CD}
\nabla: \cE @>{\a}>> \cD_C\tensor_{\cO_C}\cE
@>{\nabla_{\!\cD}\tensor id}>>
 \left(K_C \tensor_{\cO_C}
\cD_C\right)
\tensor_{\cO_C} \cE
 @>{\b\tensor id}>> K_C\tensor_{\cO_C} \cE,
\end{CD}
\end{equation}
where
\begin{itemize}
\item $\a$ is the natural inclusion 
$\cE \owns v \longmapsto 1\tensor v\in 
\cD_C\tensor_{\cO_C}\cE$;
\item $\nabla_{\!\cD}: \cD_C \lrar K_C \tensor_{\cO_C}
\cD_C$ is the connection defined by the 
$\bC$-linear left-multiplication operation of 
$K_C^{-1}$ on $\cD_C$, which satisfies the 
derivation property
\begin{equation}
\label{D-connection}
\nabla_{\!\cD}(f\cdot P) = f\cdot 
\nabla_{\!\cD}(P)+df \cdot P
\in K_C \tensor_{\cO_C}
\cD_C
\end{equation}
for $f\in \cO_C$ and $P\in \cD_C$; and
\item $\b$ is the canonical right $\cD_C$-module 
structure in $K_C$.
\end{itemize}
If we choose a local coordinate neighborhood
$U\subset C$ with a  coordinate $x$, then
 \eqref{D-connection} takes the
 following form. Let us
denote by $P' = [d/dx, P] - P\cdot d/dx$, and 
 define
$$
\nabla_{\!\!\frac{d}{dx}}(P) :=
P\cdot \frac{d}{dx} + P'.
$$
Then we have
$$
\nabla_{\!\!\frac{d}{dx}}(f\cdot P) = f\cdot
\nabla_{\!\!\frac{d}{dx}}(P) +\frac{df}{dx}\cdot P.
$$
The connection $\nabla$ of 
\eqref{D-module -> flat connection}
is integrable because
$d^2 = 1$. Actually, the statement is true for 
any dimensions. We note that
there is no reason for $\cE$ to be
coherent as an $\cO_C$-module.

Conversely, if an algebraic vector bundle 
$E$ on $C$ of rank
$r$
admits a holomorphic  connection $\nabla:
E\lrar  K_C\tensor E$, then $E$ acquires the
structure of a  
$\cD_C$-module. This is because $\nabla$ is 
automatically flat, and
the covariant derivative $\nabla_{\!X}$
for  $X\in K_{C}^{-1}$ satisfies
\begin{equation}
\label{covariant}
\nabla_{\!X}(f v) = 
f \nabla_{\!X}( v) + X(f) v
\end{equation}
for $f\in \cO_C$ and $v\in E$. A
repeated application of \eqref{covariant} makes
$E$ a $\cD_C$-module. 
The fact that every $\cD_C$-module on a curve is
principal implies that for every point $p\in C$, 
there is an open neighborhood $p\in U\subset C$ and
a linear differential operator $P$ of
oder $r$ on $U$, called a generator, such that
$
E|_U\isom \cD_U/\cD_U P.
$
Thus on an open curve $U$,
 a holomorphic connection in a
vector bundle of rank $r$ gives rise to a differential 
operator of order $r$. The converse
is true if  $\cD_U/\cD_U P$ is $\cO_U$-coherent.

\begin{Def}[$\hbar$-connection]
\label{Def:hbar-connection}
A holomorphic $\hbar$-connection on a vector bundle
$E\lrar C$  
is a $\bC[\hbar]$-linear homomorphism
$$
\nabla^\hbar : \bC[\hbar]\tensor E
\lrar \bC[\hbar, \hbar^{-1}]\tensor
K_C \tensor_{\cO_C} E
$$ 
subject to the derivation condition
\begin{equation}
\label{hbar-connection}
\nabla^\hbar (f\cdot v) = 
f \nabla^\hbar(v) + \hbar df\tensor v,
\end{equation}
where $f\in \cO_C\tensor \bC[\hbar]$ and
$v\in \bC[\hbar]\tensor E$.
\end{Def}

\begin{rem}
The \textbf{classical limit} of a
holomorphic $\hbar$-connection
is the evaluation $\hbar = 0$ of $\nabla^\hbar$, which
is simply an $\cO_C$-module homomorphism
$$
\nabla^0: E\lrar K_C \tensor_{\cO_C} E,
$$
i.e., a holomorphic 
Higgs field in the vector bundle $E$.
\end{rem}

\begin{rem}
An 
$\cO_C\tensor\bC[\hbar]$-coherent
 $\widetilde{\cD_C}$-module
is equivalent to  a vector bundle on $C$
equipped with an 
$\hbar$-connection.
\end{rem}

In analysis, the \textbf{semi-classical limit}
of a differential operator $P(x,\hbar)$ of
\eqref{local Rees P} is defined by 
\begin{equation}
\label{P-SCL}
\lim_{\hbar\rar 0} \left(
e^{-\frac{1}{\hbar} S_0(x)}P(x,\hbar)
e^{\frac{1}{\hbar} S_0(x)}
\right) = 
\sum_{\ell=0}^r a_\ell(x)
 \left(S_0'(x)\right)^{r-\ell},
\end{equation}
where $S_0(x)\in \cO_C(U)$.
The equation
\begin{equation}
\label{SCL=0}
\lim_{\hbar\rar 0} \left(
e^{-\frac{1}{\hbar} S_0(x)}P(x,\hbar)
e^{\frac{1}{\hbar} S_0(x)}
\right) = 0
\end{equation}
then determines the first term of the 
\textbf{singular
perturbation expansion} 
\begin{equation}
\label{SPE}
\Psi(x,\hbar) = \exp\left(
\sum_{m=0} ^\infty \hbar^{m-1} S_m(x)
\right)
\end{equation}
of the solution $\Psi(x,\hbar)$
of the differential equation
$$
P(x,\hbar)\Psi(x,\hbar)=0
$$
on U.
Since $dS_0(x)$ is a local section of $T^*C$
on $U\subset C$, $y=S_0'(x)$ gives the
local trivialization   of $T^*C|_U$, with 
$y\in T_x^*C$ a fiber coordinate.
 Equations \eqref{P-SCL} and 
\eqref{SCL=0} then give an equation
\begin{equation}
\label{SCL-spectral}
\sum_{\ell=0}^r a_\ell(x) y^{r-\ell}=0
\end{equation}
of a curve in $T^*C|_U$.
This motivates us to give the following
definition:

\begin{Def}[Semi-classical limit of a 
Rees differential
operator] 
Let  $U\subset C$ be an open subset of $C$
with a local coordinate $x$ such that $T^*C$ is 
trivial over $U$ with a fiber coordinate $y$. 
The semi-classical limit of a local section 
$$
P(x,\hbar) = \sum_{\ell =0}^r a_\ell(x) 
\left(\hbar \frac{d}{dx}\right)^{r-\ell}
$$
of the 
Rees ring $\widetilde{\cD_C}$ of the 
sheaf of differential operators $\cD_C$ on $U$
is the holomorphic function
$$
\sum_{\ell =0}^r a_\ell(x) 
y^{r-\ell}
$$
defined on $T^*C|_{U}$.
\end{Def}

\begin{Def}[Semi-classical limit]
Suppose a Rees $\widetilde{\cD_C}$-module
$\widetilde{\cM}$ is written 
as
\begin{equation}
\label{D-module local}
\widetilde{\cM}(U) = \widetilde{\cD_C}(U)\big/
\widetilde{\cD_C}(U)P_U
\end{equation}
on every coordinate neighborhood $U\subset C$
with a differential operator 
$P_U$ of the form \eqref{local Rees P}.
Using this expression \eqref{local Rees P}
for $P_U$, we construct a meromorphic function
\begin{equation}
\label{local Sigma}
p_U(x,y) = \sum_{\ell=0}^r a_\ell(x) y^{r-\ell}
\end{equation}
on $\overline{T^*C}|_U$, where $y$ is the fiber
coordinate of $T^*C$, which is trivialized on $U$.
Define
\begin{equation}
\label{Sigma U}
\Sigma_U = (p_U(x,y))_0
\end{equation}
as the divisor of zero of the function $p_U(x,y)$.
If $\Sigma_U$'s glue together to a spectral 
curve $\Sigma\subset \overline{T^*C}$, then 
we call $\Sigma$ the \textbf{semi-classlical limit}
of the Rees $\widetilde{\cD_C}$-module
$\widetilde{\cM}$.
\end{Def}

\begin{rem}
For the local equation \eqref{local Sigma}
to be consistent globally on $C$, the 
 coefficients of \eqref{local Rees P}
 have to satisfy
\begin{equation}
\label{al(x) condition}
a_\ell(x)\in \Gamma\!\left(U,
K_C^{\tensor \ell}\right).
\end{equation}
\end{rem}

\begin{Def}[Quantum curve for holomorphic 
Higgs bundle]
\label{Def:QC holomorphic}
A \textbf{quantum curve} associated with 
the spectral curve $\Sigma
\subset T^*C$ 
of a  holomorphic  Higgs bundle  on 
a projective algebraic curve $C$ is a Rees
$\widetilde{\cD_C}$-module $\cE$ 
 whose semi-classical
limit is $\Sigma$.
\end{Def}

The main reason we need to extend our framework 
to   meromorphic connections is that 
there are no non-trivial holomorphic connections
on $\bP^1$, whereas many important
classical examples of differential equations
are naturally defined over $\bP^1$
with regular and irregular singularities. 
A $\bC$-linear homomorphism
$$
\nabla:E\lrar K_C(D) \tensor_{\cO_C} E
$$
is said to be a \emph{meromorphic connection} 
with poles along
an effective divisor $D$ if 
$$
\nabla(f\cdot v) = f\nabla(v) +df\tensor v
$$
for every $f\in \cO_C$ and $v\in E$. Let us denote by
$$
\cO_C(*D) :=\lim_{\lrar}\cO_C(mD),
\qquad E(*D) := E\tensor_{\cO_C} \cO_C(*D).
$$
Then $\nabla$ extends to 
$$
\nabla:E(*D)\lrar K_C(*D) 
\tensor_{\cO_C(*D)} E(*D).
$$
Since $\nabla$ is holomorphic on 
$C\setminus \supp (D)$,
it induces
a $\cD_{C\setminus \supp (D)}$-module structure 
in $E|_{C\setminus \supp (D)}$.
The $\cD_C$-module direct image
$
\widetilde{E}=j_*\left(E|_{C\setminus \supp (D)}
\right)
$
 associated with the open
inclusion map $j:C\setminus \supp (D)\lrar C$
is then naturally isomorphic to 
\begin{equation}
\label{meromorphic extension}
\widetilde{E}=j_*\left(E|_{C\setminus \supp (D)}
\right) \isom E(*D)
\end{equation}
as a $\cD_C$-module. 
\eqref{meromorphic extension} is called 
the \emph{meromorphic extension} of 
the $\cD_{C\setminus \supp (D)}$-module  
 $E|_{C\setminus \supp (D)}$.

Let us take a 
local coordinate $x$ of $C$, this time around
 a pole $p_j\in \supp(D)$. 
 If a generator $\widetilde{P}$ 
 of $\widetilde{E}$ near $x=0$ has a local expression 
\begin{equation}
\label{regular}
\widetilde{P}(x,d/dx) = 
x^k \sum_{\ell=0}^r b_\ell(x) \left(
x \frac{d}{dx}
\right)^{r-\ell}
\end{equation}
around $p_j$ with locally defined 
holomorphic functions
$b_\ell(x)$, $b_0(0)\ne 0$, and an integer $k\in \bZ$, 
then $\widetilde{P}$ has a \emph{regular} 
singular point
at $p_j$. Otherwise, $p_j$ is an \emph{irregular} 
singular point of $\widetilde{P}$.

\begin{Def}[Quantum curve for a meromorphic 
Higgs bundle]
\label{Def:QC meromorphic}
Let $(E,\phi)$ be a meromorphic 
Higgs bundle defined over a projective algebraic
curve $C$ of any genus with poles 
along an effective divisor $D$, and
$\Sigma\subset \overline{T^*C}$ 
 its spectral curve. A 
\textbf{quantum curve}
associated with $\Sigma$ is the meromorphic
extension of a Rees $\widetilde{\cD_C}$-module
$\cE$ on $C\setminus \supp (D)$ such that the
closure of its
semi-classical limit $\Sigma^\circ \subset
T^*C|_{C\setminus \supp (D)}$
in the compactified cotangent bundle 
$\overline{T^*C}$
 agrees with $\Sigma$.
\end{Def}

\section{Geometry of spectral curves in the 
compactified cotangent bundle}
\label{sect:spectral}

To construct quantum curves using 
the topological recursion, we need
a smooth  Eynard-Orantin spectral
curve 
 \cite{EO1} for which we can
 apply the recursion mechanism. When the
given Hitchin spectral curve $\Sigma$
 is singular, we have to 
find a non-singular model. 
In this paper we use the normalization 
$\widetilde{\Sigma}$ of the singular spectral 
curve.
Since the quantum curve reflects the
geometry of  $\Sigma\subset \overline{T^*C}$,
it is important
 to identify
the choice of the blow-up space 
$Bl(\overline{T^*C})$ of \eqref{blow-up}
in which $\widetilde{\Sigma}$
is realized as a smooth divisor. We then determine
the initial value $W_{0,1}$ for the topological 
recursion.

The geometry of a spectral curve also 
gives us the information of the 
singularity of the quantum curve. For example,
when we have a component of
a spectral curve tangent to the divisor $C_\infty$,
the quantum curve has an  irregular singular point,
and the class of the irregularity is determined by 
the degree of tangency. We will give a classification 
of the singularity of the quantum curves
in terms of the geometry of spectral curves
in Section~\ref{sub:quantum singularity}.

In this section, we give the construction of the
canonical blow-up space $Bl(\overline{T^*C})$,
and  determine the genus  of 
the normalization $\widetilde{\Sigma}$. 
This genus is necessary to identify the 
Riemann prime form on it, which determines
another input datum $W_{0,2}$ for the
topological recursion.

There are two different ways of defining 
the spectral curve for Higgs bundles with 
meromorphic Higgs field. Our definition 
of the previous section uses the compactified
cotangent bundle. This idea also appears in
\cite{KS2013}. The traditional definition,
which assumes the pole divisor $D$ of the
Higgs field to be reduced,
is suitable for the study of moduli spaces
of parabolic Higgs bundles.
When we deal with non-reduced effective divisors,
parabolic structures do not play any role. Non-reduced 
divisors appear naturally when we deal with 
classical equations such as the Airy differential
equation, which has an irregular singular point
of class $\frac{3}{2}$
at $\infty\in \bP^1$.

Our point of view of spectral curves is also closely
related
to considering the \emph{stable pairs} of pure
dimension $1$ on $\overline{T^*C}$. Through 
Hitchin's abelianization idea, the moduli space of 
stable pairs and the moduli space of Higgs bundles
are  identified \cite{HL}.

Let $E$ be an algebraic vector bundle 
of rank $2$ on
a non-singular projective algebraic curve
$C$ of genus $g$, and 
$$
\phi:E\lrar K_C(D)\tensor_{\cO_C} E
$$
a meromorphic Higgs field with poles along
an effective divisor $D$.
The trace and the determinant of $\phi$, 
\begin{align}
\label{a1}
a_1:=-\tr (\phi) &\in H^0\left( C,K_C(D)\right),\\
\label{a2}
a_2:=\det (\phi) &\in H^0\left( C,K_C^{\tensor 2}
 (2D)\right),
\end{align}
are well defined and determine the spectral 
curve $\Sigma$ of \eqref{spectral curve}.
For the purpose of investigating the geometry
of $\Sigma$, we do not need the information of
the Higgs bundle $(E,\phi)$, or even the pole
divisor $D$. Thus in what follows,
we only assume that $a_1$ is a meromorphic section
of $K_C$, and that $a_2$ a meromorphic 
section of $K_C^{\tensor 2}$. Then the spectral curve
is re-defined as the zero-locus in 
$\overline{T^*C}$ of a quadratic
equation with $a_1$ and $a_2$ its coefficients:
\begin{equation}
\label{spectral curve general}
\Sigma :=
\left(\eta^2+\pi^*(a_1)\eta + \pi^*(a_2)\right)_0.
\end{equation}
The only condition we impose here is that
\emph{the spectral curve is irreducible.} 
In the language of Higgs bundles, this condition
corresponds to the stability of $(E,\phi)$.

Recall that $\Pic(\overline{T^*C})$ is
generated by the zero section $C_0$ of 
$T^*C$ and fibers  of the projection map
$\pi:\overline{T^*C}\lrar C$. Since the spectral 
curve $\Sigma$ is a double covering of $C$, 
as a divisor it is  expressed as
\begin{equation}
\label{Sigma in Pic}
\Sigma = 2C_0+\sum_{j=1}^a \pi^{*}(p_j)\in 
\Pic(\overline{T^*C}),
\end{equation}
where 
$
\alpha=\sum_{j=1}^a p_j\in 
\Pic^a(C)
$
is a divisor on $C$ of degree $a$. 
As an element of the N\'eron-Severi group
$$
\NS(\overline{T^*C}) = 
\Pic(\overline{T^*C})/\Pic^0(\overline{T^*C}), 
$$
it is simply 
$$
\Sigma = 2C_0+aF \in \NS(\overline{T^*C}) 
$$ 
for a typical fiber class $F$. Since the intersection
$F C_\infty =1$, we have
$a = \Sigma C_\infty$ in $\NS(\overline{T^*C})$.
From the genus formula
$$
p_a(\Sigma) = \half \Sigma\cdot 
(\Sigma +K_{\overline{T^*C}}) +1
$$
and 
$$
K_{\overline{T^*C}} = -2C_0 + (4g-4)F 
\in \NS(\overline{T^*C}),
$$
we find that the arithmetic genus of the spectral
curve $\Sigma$ is
\begin{equation}
\label{pa}
p_a(\Sigma) = 4g-3+a,
\end{equation}
where $a$ is the number of intersections 
of $\Sigma$ and $C_\infty$.
Now we wish to find the geometric genus 
of $\Sigma$.

Motivated by the completion of square
expression of the defining equation 
\eqref{spectral curve general},
\begin{equation}
\label{square}
\eta^2+\pi^*(a_1)\eta+\pi^*(a_2)= 
\left(\eta+\half \pi^*(a_1)\right)^2 -
\left(\frac{1}{4}\pi^*(a_1)^2-\pi^*(a_2)\right)
 \end{equation}
  as a meromorphic 
 section of $\pi^*K_C^{\tensor 2}$,
 we give the following definition.

 \begin{Def}[Discriminant divisor]
The \textbf{discriminant divisor} of
the spectral curve \eqref{spectral curve general}
 is a
 divisor on $C$ defined by
\begin{equation}
\label{discriminant}
\Delta:=\left(\frac{1}{4}a_1^2 -a_2
\right) = \Delta_0-\Delta_\infty,
\end{equation}
where
\begin{align}
\label{Delta 0}
&\Delta_0= \sum_{i=1}^m m_iq_i , \quad m_i>0,
\quad q_i\in C,
\\
\label{Delta infinity}
&\Delta_\infty= \sum_{j=1}^n n_jp_j, \quad n_j>0,
\quad p_j\in C.
\end{align}
\end{Def}

Since 
$
\frac{1}{4}a_1^2 -a_2
$
is a meromorphic section of $K_C^{\tensor 2}$,
we have
\begin{equation}
\label{deg Delta}
\deg \Delta = 
\sum_{i=1}^m m_i - \sum_{j=1}^n  n_j  = 4g-4.
\end{equation}
 
 \begin{thm}[Geometric genus formula]
\label{thm:geometric genus formula}
Let us define an invariant of the discriminant 
divisor by
\begin{equation}
\label{delta}
\delta=|\{i\;|\; m_i \equiv 1 \mod 2\}|+
|\{j\;|\; n_j \equiv 1 \mod 2\}|.
\end{equation}
Then    
the geometric genus of the 
spectral curve $\Sigma$ 
of \eqref{spectral curve general}
is given by
\begin{equation}
\label{pg}
\tilde{g}:= p_g(\Sigma) = 2g-1+\half \delta.
\end{equation}
We note that   \eqref{deg Delta}
 implies  $\delta \equiv 0\mod 2$.
\end{thm}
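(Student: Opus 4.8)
The strategy is to compute the geometric genus of $\Sigma$ by comparing it with the arithmetic genus $p_a(\Sigma) = 4g-3+a$ already computed in \eqref{pa}, and to account for the difference via the singularities of $\Sigma$. Since $\Sigma$ is a double cover of $C$ branched precisely where the ``discriminant'' $\frac14 a_1^2 - a_2$ has odd-order zeros or poles, the natural approach is to produce the normalization $\widetilde\Sigma$ explicitly as a smooth double cover and apply Riemann--Hurwitz. Concretely, from the completion-of-square form \eqref{square}, on the locus where $\eta$ is finite one has $\widetilde\Sigma$ locally $w^2 = \frac14 a_1^2 - a_2$ after the substitution $w = \eta + \frac12\pi^*(a_1)$; the branch points are the points of $C$ where $\mathrm{ord}(\frac14 a_1^2 - a_2)$ is odd. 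At a zero $q_i$ of order $m_i$, or a pole $p_j$ of order $n_j$, one blows up $\overline{T^*C}$ repeatedly (as in \eqref{blow-up}) to separate the two branches when the order is even, or to resolve the $A_{m_i-1}$-type cusp/tacnode to a single smooth branch point when the order is odd; this is exactly the mechanism indicated in the Airy walk-through with $w^2 = u^5$. The upshot is that $\widetilde\Sigma \to C$ is a smooth double cover branched at exactly $\delta$ points of $C$, where $\delta$ is the count \eqref{delta} of odd $m_i$ together with odd $n_j$.

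First I would set up the local analysis at each point of the discriminant divisor. Near a point $q$ with $\mathrm{ord}_q(\frac14 a_1^2 - a_2) = e$, choose a local coordinate $x$ on $C$ so that $\frac14 a_1^2 - a_2 = x^e \cdot (\text{unit})\, (dx)^{\otimes 2}$; the local model of $\Sigma$ is $w^2 = x^e$. If $e$ is even this is two smooth branches meeting to order $e/2$, and after $e/2$ blow-ups the proper transform is two disjoint smooth unbranched sheets; if $e$ is odd, after $\lfloor e/2\rfloor$ blow-ups one reaches $w^2 = x$, a single smooth branch point. The same analysis at a pole (using the coordinate $u = 1/x$ near $C_\infty$ as in \eqref{uw}) shows the pole of order $n_j$ contributes a branch point to $\widetilde\Sigma\to C$ iff $n_j$ is odd, and otherwise the proper transform of $\Sigma$ meets the proper transform of $C_\infty$ transversally in two unbranched points. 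Either way, after all these blow-ups we obtain a surface $Bl(\overline{T^*C})$ in which the proper transform $\widetilde\Sigma$ is smooth and is a double cover of $C$ with branch locus of cardinality exactly $\delta$.

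Next I would invoke Riemann--Hurwitz for the degree-$2$ morphism $\tilde\pi : \widetilde\Sigma \to C$:
\begin{equation*}
2\tilde g - 2 = 2(2g-2) + \delta,
\end{equation*}
since a simple branch point contributes $1$ to the ramification degree and there are $\delta$ of them. Solving gives $\tilde g = 2g - 1 + \tfrac12\delta$, which is \eqref{pg}. The parity statement $\delta \equiv 0 \pmod 2$ then follows from \eqref{deg Delta}: $\deg\Delta = \sum m_i - \sum n_j = 4g-4$ is even, and modulo $2$ this sum is congruent to the number of odd $m_i$ plus the number of odd $n_j$, i.e.\ to $\delta$; hence $\delta$ is even and $\tilde g$ is an integer.

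The main obstacle, and the step requiring genuine care rather than routine verification, is the local resolution analysis at the poles, i.e.\ along $C_\infty$: one must check that the change of chart \eqref{uw} really does turn a pole of order $n_j$ of the discriminant into the cusp-type local model $w^2 = u^{n_j}$ (up to units) on $\Sigma$, and that the sequence of blow-ups both resolves the singularity of $\Sigma$ and, simultaneously, separates the proper transform of $\Sigma$ from that of $C_\infty$ — so that $W_{0,1}$ will later be a well-defined meromorphic form on a smooth curve not meeting the exceptional locus's image of $C_\infty$. One also needs to confirm that no branch points of $\tilde\pi$ hide at points where $\eta$ is finite but $\frac14 a_1^2 - a_2$ vanishes to even order without the two sheets being globally separated; irreducibility of $\Sigma$ is what guarantees $\widetilde\Sigma$ is a connected double cover, so Riemann--Hurwitz applies cleanly. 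Once these local pictures are pinned down, the genus count is immediate.
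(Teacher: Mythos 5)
Your overall route is the same as the paper's: construct the normalization, show that $\tilde\pi:\widetilde{\Sigma}\to C$ is a connected double cover branched at exactly $\delta$ points, apply Riemann--Hurwitz, and deduce the parity of $\delta$ from \eqref{deg Delta}. So the issue is not the strategy but the step you yourself single out as the ``main obstacle'': the local model at the points of $\Delta_\infty$, and the statement you propose to verify there is false. The shear $w=\eta+\half\pi^*(a_1)$ is a fiberwise automorphism only over points where $a_1$ is holomorphic; over a pole $p_j$ of $a_1$ it does not extend to $\overline{T^*C}$ (for meromorphic $a_1$ the involution $\sigma$ does not extend, as the paper notes), so the germ of $\Sigma$ over $p_j$ is in general \emph{not} $w^2=u^{n_j}$ up to a unit. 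Writing $k$ and $\ell$ for the pole orders of $a_1$ and $a_2$ at $p_j$, the correct germ in the fiber chart $z=1/y$ is $z^2+\frac{c_1}{c_2}x^{\ell-k}z+\frac{1}{c_2}x^{\ell}=0$, whose own discriminant vanishes to order $\min\big(2(\ell-k),\ell\big)$, not to order $n_j=\max(2k,\ell)$. In the Hermite example ($k=3$, $\ell=4$, $n_j=6$) the germ consists of two branches $w=u$ and $w=u^3$, separated by a single blow-up, not the germ $w^2=u^6$ that your model predicts; and when $k\ge\ell$ the curve $\Sigma$ is smooth at its unique point over $p_j$ on $C_\infty$, tangent to $C_\infty$ to order $k$, with the second point of the fiber lying in the finite part $T^*C$, so there is no singularity $w^2=u^{2k}$ at all. (Note also that the relevant inversion near $C_\infty$ is the fiber coordinate $z=1/y$, not $u=1/x$: the points $p_j$ need not sit at $x=\infty$.)

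Your branch-point count survives only by a parity accident: in every case where your normal form fails one has $n_j=2k$ even, and the normalization does have two unramified points over $p_j$, so $\delta$ still equals the number of branch points. But that is exactly what must be proved, and it requires the case analysis by $(k,\ell)$ that the paper carries out (its Cases 3 and 4, governed by the Newton polygon of \eqref{irregular class}): for $2k\le\ell$, completing the square in the $z$-chart shows the branching over $p_j$ is governed by the parity of $\ell=n_j$; for $2k>\ell$ the two sheets are separated after $\ell-k$ blow-ups, and for $k\ge\ell$ with no blow-up of $\Sigma$ at all, and in both of these regimes the cover is unramified over $p_j$. With that analysis supplied, your Riemann--Hurwitz computation $2\tilde g-2=2(2g-2)+\delta$ and the parity argument are correct and yield \eqref{pg}; as written, however, the proposal rests on a local normal form that is wrong precisely where the meromorphic Higgs field makes the theorem nontrivial.
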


\begin{rem}
If $\phi$ is a holomorphic Higgs field,
then $a_1 = -\tr(\phi)$, $a_2=\det(\phi)$, and
$$
m= \delta = 4g-4, \quad n=0.
$$
Therefore we have $\tilde{g} = 4g-3$, which 
agrees with the genus formula 
of \cite[Eq.(2.5)]{DM2014}.
\end{rem}

Before giving the proof of the
formula, first we wish to identify the
geometric meaning of the invariant $\delta$.
Since $\Sigma\subset 
\overline{T^*C}$ is a double covering
of $C$ in a ruled surface, locally at
every singular point  $p$,
$\Sigma$ is either irreducible, or 
reducible and consisting of two components.
When irreducible,  it is locally isomorphic
to 
\begin{equation}
\label{cusp}
t^2 - s^{2m+1} = 0, \qquad m\ge 1.
\end{equation}
If it has two components, then it is locally isomorphic
to
\be
\label{even}
t^2 -s^{2m} =(t-s^m)(t+s^m)=0.
\ee
Since the local form of
$\Sigma$ at a ramification point of
$\pi:\Sigma\lrar C$   
 is written as \eqref{cusp} with $m=0$, 
by extending the terminology  ``singularity''
to ``critical points'' of the morphism $\pi$,
we include a ramification point as a cusp
with $m=0$. 
\begin{prop}
\label{prop:geometric genus formula 2}
The invariant $\delta$ of 
\eqref{delta} counts the number
of  cusps of the spectral curve $\Sigma$. 
\end{prop}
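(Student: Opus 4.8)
The plan is to analyze the local structure of $\Sigma$ at each point $p\in C$ and count when a cusp (in the extended sense, including ramification points of $\pi$) occurs, then match this count against $\delta$. First I would recall that the equation of $\Sigma$ near a point $p$ with local coordinate $s$ is, after completing the square as in \eqref{square}, locally of the form $\tilde\eta^2 = g(s)$, where $\tilde\eta = \eta + \half\pi^*(a_1)$ is the shifted fiber coordinate and $g(s)$ is the local expression of the meromorphic section $\frac14 a_1^2 - a_2$ of $K_C^{\tensor 2}$, i.e. $g$ has a zero of order $m_i$ at each $q_i$ and a pole of order $n_j$ at each $p_j$. The key observation is that the local analytic type of the curve $\tilde\eta^2 = g(s)$ depends only on the parity of the order of $g$ at $p$: if $\ord_p(g) = 2k$ is even, then $g(s) = s^{2k}h(s)$ with $h(0)\neq 0$, and since we work over $\bC$ we can extract a square root of $h$, so the curve splits into two smooth branches $\tilde\eta = \pm s^k\sqrt{h(s)}$ crossing transversally (or, if $k=0$, two disjoint smooth sheets — an étale point, not a cusp). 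If $\ord_p(g) = 2m+1$ is odd, then the curve is locally $\tilde\eta^2 = s^{2m+1}(\text{unit})$, which is irreducible and, after absorbing the unit, isomorphic to the cusp \eqref{cusp}; in particular for $m=0$ this is a simple ramification point of $\pi$, which we have agreed to count as a cusp.

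Next I would treat the points at infinity with care, since near a pole $p_j$ of $g$ the coordinate $\tilde\eta$ is not the right one — there $\Sigma$ approaches or meets $C_\infty$. Passing to the other affine chart of the ruled surface via $w = 1/\tilde\eta$ (up to the $\half\pi^*(a_1)$ shift, which only changes things by a holomorphic section and does not affect pole orders or parities at $p_j$), the local equation becomes $1 = w^2 g(s)$, i.e. $w^2 = 1/g(s)$, and $1/g$ has a zero of order $n_j$ at $p_j$. So exactly the same parity dichotomy applies: $n_j$ odd gives a cusp of $\Sigma$ sitting over $p_j$ (this is where $\Sigma$ is tangent to or singular against $C_\infty$ in the odd case), while $n_j$ even gives two smooth branches. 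Combining the two analyses, the total number of cusps of $\Sigma$ — counting ramification points of $\pi$ among them — is precisely $|\{i : m_i\equiv 1\bmod 2\}| + |\{j : n_j\equiv 1\bmod 2\}| = \delta$. Points of $C$ not in the support of $\Delta$ contribute nothing: there $g$ is a nonvanishing holomorphic unit, so $\Sigma$ is two disjoint smooth sheets over a neighborhood, an unramified double cover with no critical point.

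I expect the main obstacle to be bookkeeping at the divisor at infinity: one must be sure that replacing $\eta$ by $\tilde\eta = \eta + \half\pi^*(a_1)$ and then by $w = 1/\tilde\eta$ does not alter the relevant pole/zero orders or the analytic isomorphism type, and that the completion-of-square shift is legitimate even when $a_1$ itself has poles (so that $\tilde\eta$ is only a meromorphic section of $\pi^*K_C$, but the divisor $\Sigma$ it cuts out is unchanged). A clean way to sidestep some of this is to work directly with the discriminant section $\frac14 a_1^2 - a_2$ as an intrinsic object: the branch locus of $\pi:\Sigma\to C$, as a double cover, is governed by this section up to squares in the local ring, and the local type of the double cover at $p$ is the $\bZ/2$-class of $\ord_p$ of the discriminant — which is exactly its parity. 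With this framing the proposition becomes the statement that cusps correspond bijectively to odd-order points of $\Delta$, which is \eqref{delta} by definition. I would close by noting that the genus computation of Theorem~\ref{thm:geometric genus formula} then follows from this local picture via the standard formula relating $p_a(\Sigma)$ of \eqref{pa} to $p_g(\widetilde\Sigma)$: each cusp of type \eqref{cusp} drops the genus by $m$ when $\delta$-data are tracked, but since only the parity enters, resolving all cusps lowers $p_a$ to $p_g$ by a total amount reconciling $4g-3+a$ with $2g-1+\half\delta$ — though that reconciliation is the content of the theorem rather than of this proposition, so I would only invoke it as motivation.
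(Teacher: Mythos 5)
Your guiding idea---that the parity of the order of the discriminant $\tfrac14 a_1^2-a_2$ at $p$ decides between a locally irreducible germ (a cusp, including the $m=0$ ramification case) and a two-branch germ---is the same as the paper's, and your analysis at points where $a_1$ is holomorphic (the zeros $q_i$; Cases 1 and 2 of the paper's proof) is sound. The genuine gap is at the points of $\Sigma\cap C_\infty$ lying over poles of $a_1$, which is exactly where the delicate cases live. There your parenthetical claim that the shift $\tilde\eta=\eta+\half\pi^*(a_1)$ ``only changes things by a holomorphic section'' is false, since $a_1$ has a pole at those very points $p_j$; and $w=1/\tilde\eta$ is not a coordinate on $\overline{T^*C}$ near the relevant point: in the honest chart $(x,z)$ with $z=1/y$ one has $w=zx^k/\bigl(x^k+\half c_1(x)z\bigr)$, which is undefined at $(x,z)=(0,0)$. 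Consequently the local model you derive, $w^2=1/g(s)$, i.e.\ $w^2=s^{n_j}\cdot(\mathrm{unit})$, is not the analytic type of the germ of $\Sigma$. The Hermite example (second row of Table~\ref{tab:examples}) makes this concrete: there $n_j=6$, so your model predicts two smooth branches with contact of order $3$, whereas the actual germ \eqref{Hermite singularity} is an ordinary node with branches $w=u+\cdots$ and $w=u^3+\cdots$; and in the paper's Case 4 ($k\ge\ell$) the germ on $C_\infty$ is even smooth, the second point of $\Sigma$ over $p_j$ sitting elsewhere on the fiber. Your parity conclusion happens to survive, but the derivation as written does not establish it at these points.

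The correct route---the one the paper follows in Cases 3 and 4---is to write the equation of $\Sigma$ in the chart $(x,z)$ by clearing poles, which gives \eqref{regular germ} when $\ell>k$ and \eqref{regular germ 2} when $k\ge\ell$, and only then complete the square. In the monic case one finds that the discriminant in this chart satisfies $\Delta_z=\bigl(x^{2\ell}/c_2^2\bigr)\Delta_y$ (cf.\ \eqref{discriminants}), an even-order multiple of $\tfrac14a_1^2-a_2$, so the parity of $n_j$ does govern irreducibility versus two branches; in the non-monic case $k\ge\ell$ one checks directly that $\Sigma$ is smooth there and $\pi$ is unramified over $p_j$, consistent with $n_j=2k$ being even. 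Your ``clean way'' (a quadratic extension of $\bC((s))$ obtained by adjoining $\sqrt{\Delta}$ is ramified iff $\ord_p\Delta$ is odd) is the right intrinsic statement, and it does show that the normalized covering $\Sigma_{\min}\to C$ is branched exactly over the odd-order points of $\Delta$, whose number is $\delta$ by \eqref{delta}; but to identify these branch points with cusps of $\Sigma$ in the sense of the local model \eqref{cusp} you still need the correct local equation of $\Sigma$ at infinity, i.e.\ the chart computation above, so that step should be carried out rather than asserted.
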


Thus 
we have 
\begin{equation}
\label{pg 2}
p_g(\Sigma) = 2 g(C) -1+\half \;(
\text{\rm{the number
of  cusps}}).
\end{equation}

\begin{proof}[Proof of \eqref{pg 2}, assuming
Proposition~\ref{prop:geometric genus formula 2}]
Let $\nu_{\min}:\Sigma_{\min}\lrar \Sigma$ 
be the minimal 
 resolution of singularities of $\Sigma$.
 Then 
 $
 \pi_{\min}=\pi\circ\nu_{\min}:\Sigma_{\min}
 \lrar C
 $
 is a double sheeted covering of $C$ by a 
 smooth curve $\Sigma_{\min}$. If 
  $\Sigma$ has two components at a
 singularity $P$
 as in \eqref{even}, then $\pi_{\min}^{-1}(P)$
 consists of two points and $\pi_{\min}$ is 
 not ramified there. If $P$ is a cusp
 \eqref{cusp}, then 
 $\pi_{\min}^{-1}(P)$ is a ramification point 
 of the covering $\pi_{\min}$. 
 If $\delta$ counts the total number of 
 cusp singularities and the ramification points of 
 $\pi:\Sigma\lrar C$, then the 
 Riemann-Hurwitz formula
 gives us
 $$
 2-2g\left(\Sigma_{\min}\right) -\delta
 = 2\left( 2-2g(C)-\delta\right),
 $$
 which yields the genus formula \eqref{pg 2}.
\end{proof}

Since we wish to give all information of 
\eqref{blow-up} from the defining equation
\eqref{spectral curve general},
we proceed to derive a local 
structure of $\Sigma$ at each 
singularity from the global
equation in what follows. 

 \begin{proof}[Proof of Theorem~\ref{thm:geometric 
 genus formula} and 
 Proposition~\ref{prop:geometric 
 genus formula 2}]
 The proof is broken into four
 cases. Cases~3 and 4 are
  related to the Newton Polygon
 we mentioned in Introduction, \eqref{irregular class}.

 \begin{case} First we consider the case 
 with a \emph{holomorphic}
  $a_1\in H^0(C,K_C)$,
 and both  $\Delta_0$ and $\Delta_\infty$ are
 reduced. As we see below, in this case
  $\Sigma$ is non-singular, and  the two
 genera \eqref{pa} and \eqref{pg}  agree.
 \end{case}

 Let us consider the \emph{graph} 
 $\Gam_{-\half a_1}$ of the
 holomorphic $1$-form $-\half a_1$ in
 ${T^*C}$. Since $T^*C$ is the total 
 space of the canonical bundle $K_C$, the graph 
  is  a cross-section of $T^*C$. 
  We define an involution
$ \sigma:\overline{T^*C}\lrar \overline{T^*C}$
 as a reflection about $\Gam_{-\half a_1}$ along
 each fiber of $\pi$. In terms of the fiber coordinate
 $y\in T^*_xC$, it is written as
  \begin{equation}
\label{involution}
ydx\longmapsto  -ydx - a_1, \qquad ydx\in 
\overline{T^*_xC}.
\end{equation}
The spectral curve is invariant under the involution,
$\sigma:\Sigma\lrar \Sigma$, because of
\eqref{square}. By definition, 
$\Gam_{-\half a_1}\subset T^*C$ is a fixed-point set
of the involution $\sigma$. The
divisor $C_\infty$ is also fixed
by $\sigma$.
 Note that we have in this case
 \begin{equation}
 \label{Galois}
 \Gal(\Sigma/C) = \la \sigma\ra.
 \end{equation}
 Thus for a holomorphic $a_1$, the Galois
 action of $\pi:\Sigma\lrar C$ extends to the 
 whole ruled surface $\overline{T^*C}$. 
This does not hold for a meromorphic $a_1$.

 As remarked
  above, if $a_2\in H^0(C,K_C^{\tensor 2})$
is also holomorphic, then 
 $\pi:\Sigma \lrar C$ is simply branched over
$\Delta = \Delta_0$, and
$\Sigma$ is a smooth curve of genus
$4g-3$. This is in agreement of \eqref{pa}
because $n=0$ in this case.

If $a_2$ is meromorphic,  then
the pole divisor of $a_2$ is  given by
$(a_2)_\infty = \Delta_\infty$ of degree $n$.
Since $\Delta_\infty$ is reduced,
from \eqref{square} we see that 
$\pi:\Sigma \lrar C$ is ramified at 
the intersection of $C_\infty$ and 
$\pi^{-1}(\Delta_\infty)$. 
The spectral curve is also ramified at its intersection 
with $\Gam_{-\half a_1}$, which occurs
along the fibers $\pi^{-1}(\Delta_0)$. 
Note that $\deg\Delta_0 = 4g-4+n$ because 
of \eqref{deg Delta}.
Thus $\pi:\Sigma \lrar C$ is simply ramified
at a total of $4g-4+2n$ points. Therefore,
$\Sigma$ is non-singular, and we  deduce
that its genus is given by
$$
p_g(\Sigma) = p_a(\Sigma) = 4g-3+n
$$
from the Riemann-Hurwitz formula.
As a divisor class, we have
$$
\Sigma = 2C_0+\pi^*(\Delta_\infty)\in 
\Pic(\overline{T^*C}),
$$
in agreement of \eqref{Sigma in Pic}.

\begin{case}
Still $a_1\in H^0(C,K_C)$ is holomorphic, 
but $\Delta$ is non-reduced. The first example
of Table~\ref{tab:examples}, the Airy differential
equation, at $x=\infty$ falls into this category.
\end{case}

The involution \eqref{involution}
is well defined. Let $q_i\in \supp(\Delta_0)$
be a point at
 which $m_i>1$. From the global 
equation \eqref{square}, we see that the curve 
germ of $\Sigma$ at its intersection $Q$ with 
the fiber $\pi^{-1}(q_i)$ is given by a formula
$$
y^2 = x^{m_i},
$$
where $x$ is the pull-back of the base coordinate
on $C$ and $y$ a fiber coordinate, possibly tilted by
a holomorphic function in x.
We blow up once at $(x,y) = (0,0)$, using a local
parameter $y_1 = y/x$ on the exceptional
divisor. The proper transform of the curve
germ becomes
$$
y_1^2 = x^{m_i-2}.
$$
Repeat this process at $(x,y_1) = (0,0)$, until
we reach the equation
$$
y_\ell ^2 = x^{\epsilon},
$$
where $\epsilon = 0$ or $1$. The proper transform
of the curve germ is now non-singular. 
We see that after a sequence of
$\lfloor \frac{m_i}{2}\rfloor$ blow-ups starting at
the point $Q\in \Sigma \cap \pi^{-1}(q_i)$, 
the proper transform of $\Sigma$ is simply 
ramified over $q_i\in C$ if $m_i$ is odd, and
unramified if $m_i$ is even.
We apply the same sequence of blow-ups at
each $q_i$ with higher multiplicity. 

Now let $p_j\in \supp(\Delta_\infty)$ with 
$n_j>1$. The intersection $P = \Sigma\cap 
\pi^{-1}(p_j)$ lies on $C_\infty$, 
and $\Sigma$ has a singularity at $P$. Let
$z=1/y$ be a fiber coordinate of $\pi^{-1}(p_j)$
at the infinity. Then the curve germ of $\Sigma$ 
at the point $P$ is given by
$$
z^2 = x^{n_j}.
$$
The involution $\sigma$ in this coordinate is
$z\longmapsto -z$. The blow-up process we
apply at $P$ is the same as before. After 
$\lfloor \frac{n_j}{2}\rfloor$ blow-ups starting at
the point $P\in \Sigma \cap \pi^{-1}(p_j)$,
the proper transform of $\Sigma$ is simply 
ramified over $p_j\in C$ if $n_j$ is odd, and
unramified if $n_j$ is even. Again we do this
process for all $p_j$ with a higher multiplicity.

Let us  define $Bl_{\min}(\overline{T^*C})$ as
the application of a total of
\begin{equation}
\label{blow-up times}
\sum_{i=1}^m \left\lfloor \frac{m_i}{2}\right\rfloor
+\sum_{j=1}^n \left\lfloor \frac{n_j}{2}\right\rfloor
\end{equation}
times blow-ups on $\overline{T^*C}$
as described above. 
 \begin{equation}
 \label{Bl min}
\xymatrix{
\Sigma_{\min} 
\ar[dd]_{\bar{\pi}}
 \ar[rr]^{\bar{i}}\ar[dr]^{\nu_{\min}}&&Bl_{\min}(\overline{T^*C})
\ar[dr]^{\nu_{\min}}
\\
&\Sigma \ar[dl]_{\pi}\ar[rr]^{i} &&
\overline{T^*C}.  \ar[dlll]^{\pi}
\\
C 		}
\end{equation}
The proper transform $\Sigma_{\min}$ is 
the minimal desingularization of $\Sigma$.
Note that the morphism
\begin{equation}
\label{pi bar}
\bar{\pi} = \pi\circ\nu:\Sigma_{\min}
\lrar C
\end{equation}
is a double covering, ramified exactly at 
$\delta$ points. Since $p_a(\Sigma_{\min})
= p_g(\Sigma)$, \eqref{pg} follows from 
the Riemann-Hurwitz formula applied to 
$\bar{\pi}:\Sigma_{\min}\lrar C$.
It is also obvious that $\delta$ counts the
number of  cusp points of $\Sigma$, including 
smooth ramification points
of $\pi$.

\begin{case}
We are now led to considering a meromorphic 
$a_1$. Let $p\in C$ be a pole of $a_1$ of 
order $k\ge 1$.  Assume that $a_2$ also 
has a pole of oder $\ell$ at $p$, and that
$\ell > k$. The second, the third,
and the fourth  examples
 of Table~\ref{tab:examples},
 all at $x=\infty$,
 fall into this
category.
\end{case}

Choose a local coordinate $x$ of $C$ around 
$p$, and express 
$$
a_1 = \frac{c_1}{x^k},
\qquad 
a_2 = \frac{c_2}{x^\ell},
$$
where $c_1$ and $ c_2\in \cO_{C,p}$ 
are unit elements. Since both of $a_1$ and $a_2$ 
have poles at $p$, the spectral curve intersects
with $C_\infty$ along the fiber $\pi^{-1}(p)$. 
The curve germ at this intersection point
is given by the equation
\begin{equation}
\label{singular germ}
y^2 + \frac{c_1}{x^k} y + \frac{c_2}{x^\ell}
= 0,
\end{equation}
or equivalently, 
\begin{equation}
\label{regular germ}
z^2 + \frac{c_1}{c_2}x^{\ell-k} z +\frac{1}{c_2}
x^\ell = 
\left(z+\half \frac{c_1}{c_2}x^{\ell-k}\right)^2
-\frac{1}{4}\left(\frac{c_1}{c_2}x^{\ell-k}
\right)^2 + \frac{1}{c_2}
x^\ell
=0,
\end{equation}
where $z=1/y$ is a fiber coordinate of $\pi^{-1}(p)$
at infinity. Note that the coefficients of 
\eqref{regular germ} are all in $\cO_{C,p}$. 
The discriminants of \eqref{singular germ}
and \eqref{regular germ}
are given by
\begin{equation}
\label{discriminants}
\Delta_y:=
\frac{1}{4}\frac{c_1^2}{x^{2k}}-\frac{c_2}{x^\ell},
\qquad
\Delta_z:= 
\frac{1}{4}\left(\frac{c_1}{c_2}x^{\ell-k}
\right)^2 - \frac{1}{c_2}x^\ell.
\end{equation}

If $2k> \ell$, then the contribution 
from $p$ in $\Delta_\infty$ is 
$-2k p$, which does not count in $\delta$. 
Since this inequality
 is equivalent to $2(\ell-k)< \ell$,
 the contribution from $p$ in the
discriminant $\Delta_z$ is $2(\ell-k) p$. 
Locally around the  singularity, the
spectral curve is thus reducible with two
components.
We can apply the blow-up 
process of Case~2 to \eqref{regular germ}
and obtain a resolved 
curve germ unramified over $p\in C$.
For the case of
the Hermite differential equation 
given as the second example
 of Table~\ref{tab:examples}, we have
$k = 3$ and $\ell=4$.

If $2k \le \ell$, then the contribution from $p$ in
$\Delta_\infty$ is $-\ell p$. Therefore, depending on the
parity of $\ell$, it has a contribution to $\delta$.
The infinity point $x=\infty$ of the Gau\ss\
hypergeometric equation, the third
example of Table~\ref{tab:examples}, 
falls into this case, where we have
$k = 1$ and $\ell=2$.
The inequality $2k \le \ell$ is the same as 
$2(\ell-k)\ge \ell$, hence the contribution of $p$ in
$\Delta_z$ is $\ell p$. Therefore, whether
the resolved curve germ is ramified or unramified 
depends on the parity of $\ell$,
which is exactly recorded in $\delta$. If it is odd, 
then the singularity is a cusp, contributing $1$ to
$\delta$.

The above consideration shows that we need
to perform $\ell-k$ times blow-ups if $2k > \ell$,
and $\lfloor \frac{\ell}{2}\rfloor$ times 
blow-ups if $2k \le\ell$, to construct
$Bl_{\min}(\overline{T^*C})$ and 
$\Sigma_{\min}$.

\begin{case}
Finally, we assume that $a_1$ has a pole of oder
$k \ge 1$ at $p\in C$, and $a_2$ has a pole of
order $\ell$ at $p$, with $k \ge \ell$. We allow 
$a_2$ to be holomorphic at $p$. The
third example of
Table~\ref{tab:examples}, the
 Gau\ss\ hypergeomtric 
equation at $x=0,1$, and the
final example,  at $x=\pm 1,\infty$,
fall into this case.
\end{case}

The equation of the spectral curve is the 
same as \eqref{singular germ}, and its
discriminant is given by $\Delta_y$ of
\eqref{discriminants}. Since $k \ge \ell$, 
the contribution of $p$ in $\Delta_\infty$ is
$-2k p$, which is not counted in $\delta$. 
Let us re-write \eqref{regular germ}
as
\begin{equation}
\label{regular germ 2}
x^{k-\ell}z^2 +\frac{c_1}{c_2} z+ \frac{1}{c_2}
x^{k} = 0.
\end{equation}
Since $c_1/c_2 \in \cO_{C,p}$ is a unit,
we can see from this equation 
 that the curve germ passes through
$(x,z) = (0,0)$ only once as a regular point.
Indeed the discriminant of \eqref{regular germ 2}
does not vanish at $x=0$. 
In particular, $\Sigma$ is non-singular 
at its intersection of $\pi^{-1}(p)$. 
Therefore, $p\in C$ does not contribute
into the Rimann-Hurwitz formula, which
agrees with the fact that $\delta$ does not record
$p$.

This completes the proof of 
Theorem~\ref{thm:geometric genus formula},
and the fact that $\delta$ counts the total number
of odd cusps on $\Sigma$.
\end{proof}

The proof of the above theorem give us the way to
construct the blow-up space
$Bl(\overline{T^*C})$ of \eqref{blow-up}.
The data we need is not only the discriminant divisor
\eqref{discriminant}, but also 
the pole divisors of the
coefficients of the defining equation 
\eqref{spectral curve general}
of the spectral curve.
Let us write
\begin{equation}
\label{poles of a1a2}
(a_1)_\infty = \sum_{j-1}^n k_j p_j,\qquad
(a_2)_\infty = \sum_{j-1}^n \ell_j p_j,
\end{equation}
where $\{p_1,\dots,p_j\} = \supp (\Delta_\infty)$.
At each $p_j$, a Newton polygon is defined
as the upper part of the convex hull
of three points $(0,0), (1, k_j), (2, \ell_j)
\in \bR^2$, 
as in Definition~\ref{def:regular and irregular}.
We also define the invariant 
\be
\label{r}
r_j = \begin{cases}
k_j \qquad 2k_j\ge \ell_j,
\\
\frac{\ell_j}{2} \qquad 2k_j\le \ell_j.
\end{cases}
\ee

If our mission is only to resolve the singularities
of $\Sigma$, then we can use the following
blow-up method.

\begin{Def}[Construction of the minimal 
blow-up space]
\label{def:Bl min}
The minimal blow-up space $Bl_{\min}(\overline{T^*C})$ 
of \eqref{Bl min} is defined by blowing up
$\overline{T^*C}$ in the following way,
as analyzed in the proof of 
Theorem~\ref{thm:geometric genus formula}.
\begin{itemize}
\item At each $q_i$ of \eqref{Delta 0}, blow up 
at the intersection  $\Sigma\cap \pi^{-1}(q_i)$
a total of $\lfloor \frac{m_i}{2}\rfloor$ times.
\item At each $p_j$ of \eqref{Delta infinity},
  blow up 
at the intersection  $\Sigma\cap \pi^{-1}(p_j)$
a total of
\begin{enumerate}
\item $\lfloor \frac{n_j}{2}\rfloor$ times, if 
$k_j=0$, or 
 $k_j>0$ and $\ell_j \ge  2k_j$, and
\item $\ell_j - k_j$ times, if  
$k_j>0$ and  $2\ell_j > 2k_j> \ell_j$. 
\end{enumerate}
Here, $k_j$ ($\ell_j$, resp.)
is the order of pole of $a_1$ ($a_2$, resp.) at
$p_j$.
\end{itemize}
\end{Def}

\begin{rem}
The last case, $k_j>0$ and  $2\ell_j > 2k_j> \ell_j$,
is counter intuitive and does not follow the
rest of the pattern. The singularity of the spectral 
curve of the Hermite differential equation
at $x=\infty$
(the second row of Table~\ref{tab:examples})
 gives a good example. While
the pole divisor of the discriminant has 
order $6$, and the intersection of the 
spectral curve $\Sigma$
and  $C_\infty$ has degree $4$, we only need 
one time blow-up.
\end{rem}

The cumbersome definition of $Bl_{\min}(\overline{T^*C})$ becomes simple if 
we appeal to the Newton polygon. 

\begin{Def}[Construction of the  
blow-up space]
\label{def:Bl}
The  blow-up space $Bl(\overline{T^*C})$ 
of \eqref{blow-up}
is defined by blowing up
$\overline{T^*C}$ in the following way.
\begin{itemize}
\item At each $q_i$ of \eqref{Delta 0}, blow up 
at the intersection  $\Sigma\cap \pi^{-1}(q_i)$
a total of $\lfloor \frac{m_i}{2}\rfloor$ times.
\item At each $p_j$ of \eqref{Delta infinity},
  blow up 
at the intersection  $\Sigma\cap \pi^{-1}(p_j)$
a total of
$\lceil r_j\rceil$ times.
\end{itemize}
\end{Def}

\begin{thm}
\label{thm:nonsingular}
In the blow-up space 
$Bl(\overline{T^*C})$, we have the following.
\begin{itemize}
\item
The proper transform 
$\widetilde{\Sigma}$
of the spectral curve
$\Sigma \subset \overline{T^*C}$ 
by the birational morphism
$\nu : Bl(\overline{T^*C})\lrar
\overline{T^*C}$ is a smooth curve with 
a holomorphic map
$\tilde{\pi}=\pi\circ\nu:\widetilde{\Sigma} \lrar C$.

\item The proper transform of $C_\infty$ and
$\widetilde{\Sigma}$ do not intersect in 
$Bl(\overline{T^*C})$. 

\item 
The Galois action
 $\sigma:\Sigma\lrar \Sigma$
lifts to an involution of $\widetilde{\Sigma}$,
and the morphism 
\begin{equation}
\label{pi-tilde}
\tilde{\pi}:\widetilde{\Sigma}
\lrar C
\end{equation}
is a Galois covering with the Galois group 
$\Gal(\widetilde{\Sigma}/C) =\la \tilde{\sigma}
\ra  \isom \bZ/2\bZ$.
\begin{equation}
\label{sigma-tilde}
\begin{CD}
\widetilde{\Sigma}@>{\nu}>>\Sigma @>{\pi}>> C
\\
@V{\tilde{\sigma}}VV @VV{\sigma}V @|
\\
\widetilde{\Sigma}@>>{\nu}> \Sigma
@>>{\pi}> C
\end{CD}
\end{equation}
\end{itemize}
\end{thm}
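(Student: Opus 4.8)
The plan is to localize the problem. The birational morphism $\nu:Bl(\overline{T^*C})\lrar\overline{T^*C}$ of \eqref{blow-up} is an isomorphism away from the finitely many blow-up centres, so all three assertions only have to be checked over the points $q_i\in\supp(\Delta_0)$ with $m_i\ge 2$, over the points $p_j\in\supp(\Delta_\infty)$, and in the generic region. Over a point of $C$ at which $a_1,a_2$ are holomorphic and $\Delta=\tfrac14 a_1^2-a_2$ is nonzero, $\pi:\Sigma\lrar C$ is an \'etale double cover and $\Sigma$ is disjoint from $C_\infty$; over a simple zero of $\Delta$, $\pi$ is simply ramified and $\Sigma$ is smooth. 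So away from the blow-up centres $\widetilde{\Sigma}$ is already smooth and already misses the proper transform of $C_\infty$, and one is reduced to the local pictures at the $q_i$ and $p_j$. For smoothness (first bullet) I would re-use, essentially verbatim, the local computations in the proof of Theorem~\ref{thm:geometric genus formula}: in each of the Cases 2, 3, 4 there, the $\lfloor m_i/2\rfloor$ (resp.\ $\ell_j-k_j$ or $\lfloor n_j/2\rfloor$) blow-ups prescribed for $Bl_{\min}(\overline{T^*C})$ already produce the minimal resolution $\Sigma_{\min}$, which is smooth; and Definition~\ref{def:Bl} differs from Definition~\ref{def:Bl min} only by performing possibly more blow-ups at each $p_j$. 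Every such additional blow-up is centred at a point of the already-smooth curve $\Sigma_{\min}$ on a smooth surface, so the strict transform stays smooth and is in fact isomorphic to $\Sigma_{\min}$ under $\nu$. Hence $\widetilde{\Sigma}$ is smooth, $\widetilde{\Sigma}\isom\Sigma_{\min}$ is the normalization of $\Sigma$, and $\tilde{\pi}=\pi\circ\nu$ is a finite (degree $2$) holomorphic morphism onto $C$.

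The second bullet is the heart of the matter: one must check that the number $\lceil r_j\rceil$ of blow-ups at $p_j$ — rather than the smaller number that already resolves $\Sigma$ — is precisely what pulls $\widetilde{\Sigma}$ off $C_\infty$. I would run the Newton-polygon case analysis again, now also tracking the strict transform of $C_\infty=\{z=0\}$. The quantity to extract from the germ \eqref{regular germ} is the order of tangency of $\Sigma$ with $C_\infty$ at $P=\Sigma\cap\pi^{-1}(p_j)\cap C_\infty$. When $2k_j\le\ell_j$ the branch(es) of $\Sigma$ meet $\{z=0\}$ to order $\ell_j/2$; when $2k_j>\ell_j$ the germ splits into two smooth branches $z\sim-\tfrac{c_1}{c_2}x^{\ell_j-k_j}$ and $z\sim-\tfrac1{c_1}x^{k_j}$, tangent to $C_\infty$ to orders $\ell_j-k_j$ and $k_j$, and since $k_j>\ell_j-k_j$ in this range the maximal tangency is $k_j$; Case 4 ($k_j\ge\ell_j$, $\Sigma$ already smooth at $P$) is the same with the single branch $z\sim-x^{k_j}/c_1$ of \eqref{regular germ 2}. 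In every case the maximal tangency equals $\lceil r_j\rceil$, and one checks that each blow-up at the point(s) where the strict transform of $\Sigma$ still meets that of $C_\infty$ drops this tangency by exactly one; after $\lceil r_j\rceil$ blow-ups the two proper transforms are disjoint over $p_j$. Together with the generic observation this gives $\widetilde{\Sigma}\cap(\text{proper transform of }C_\infty)=\emptyset$.

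For the third bullet: $\widetilde{\Sigma}$ is a smooth irreducible projective curve (the normalization of the irreducible curve $\Sigma$) and $\tilde{\pi}:\widetilde{\Sigma}\lrar C$ is finite of degree $2$, hence over $\bC$ automatically a Galois covering with $\Gal(\widetilde{\Sigma}/C)\isom\bZ/2\bZ$; let $\tilde{\sigma}$ be its nontrivial deck transformation. The sheet-exchange involution $\sigma$ of $\Sigma$ (which is regular on $\Sigma_{\min}$, and equals the restriction of the global involution \eqref{involution} when $a_1$ is holomorphic) lifts uniquely, by functoriality of the normalization, to an automorphism of $\widetilde{\Sigma}$ over $C$ commuting with $\nu$; this lift must be $\tilde{\sigma}$, since both are nontrivial over $\mathrm{id}_C$, and uniqueness of the lift also forces $\tilde{\sigma}^2=\mathrm{id}$. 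The commuting diagram \eqref{sigma-tilde} is then exactly the compatibility $\nu\circ\tilde{\sigma}=\sigma\circ\nu$ together with $\pi\circ\sigma=\pi$.

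I expect the main obstacle to be the bookkeeping in the second bullet, specifically the counterintuitive range $2k_j>\ell_j>k_j$ (the Hermite-type singularity): there one must follow two branches of different tangency orders through a single sequence of blow-ups and verify that $\lceil r_j\rceil=k_j$ — and not the smaller number $\ell_j-k_j$ that already resolves $\Sigma$ — is the count that simultaneously separates both branches from $C_\infty$ while keeping the strict transform smooth.
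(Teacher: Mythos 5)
Your proposal is correct and follows essentially the same route as the paper: smoothness of $\widetilde{\Sigma}$ is inherited from the minimal resolution constructed in the proof of Theorem~\ref{thm:geometric genus formula} (extra blow-ups being centred at smooth points of $\Sigma_{\min}$), the second bullet is settled by exactly the paper's Newton-polygon case analysis of the tangency of the branches of \eqref{regular germ} with $C_\infty=\{z=0\}$ showing that $\lceil r_j\rceil$ blow-ups separate the proper transforms, and the involution statement is the same degree-$2$ Galois argument (your appeal to functoriality of normalization is just a slightly more formal phrasing of the paper's observation that $\tilde{\sigma}$ agrees with $\sigma$ off the singular locus). No gaps beyond the minor half-integer bookkeeping in the odd-cusp case, which you handle correctly by counting $\lfloor\ell/2\rfloor$ blow-ups plus one final separating blow-up.
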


\begin{proof}
The curve we are trying to desingularize is the
support $\Sigma \cup C_\infty$ of the total divisor
\eqref{total} of the characteristic polynomial. 
Even $\Sigma$ is smooth at its intersection
with $C_\infty$, the support $\Sigma \cup C_\infty$
is always
singular. 
The key point is that the $\lceil r_j \rceil$ times
blow-up at the intersection is exactly what we need
to desingularize  
$\Sigma\cup C_\infty$.

Let $P\in \Sigma\cap C_\infty$ so that 
$\pi (P) = p_j$. We drop the index $j$ in the
rest of this proof.

The only case $P$ is a smooth point of 
$\Sigma$ is Case~4. From \eqref{regular germ 2},
we see that the spectral curve near $P$ is
given by $z=x^k$. Thus it is tangent to 
$C_\infty$ with the multiplicity $k$. 
Therefore, we need $k$ times
 blow-ups to separate the proper transforms
 of $\Sigma$ and $C_\infty$. 
By \eqref{r},
we have $r=k$.

The point $P$ is a cusp singularity only 
when $0\le 2k\le \ell$ and $\ell$ is odd.
We have $r = \ell/2$, and we need
$\lfloor \ell/2\rfloor$-times blow-ups to 
desingularize $\Sigma$ at $P$. To separate
the proper transforms of $\Sigma$ and $C_\infty$
in the end, we need one more blow-up. 
Therefore, we need a total of $\lceil r \rceil$ blow-ups.

If $0\le 2k\le \ell$ and $\ell$ is even, then 
still we have $r = \ell/2$. In this case the
spectral curve is locally reducible at $P$, 
and requires $r$-times blow-ups for desingularization.
Since the proper transform of $P$ consists of
two distinct points, the proper transforms 
of $\Sigma$ and $C_\infty$ are separated
after $r$-th blow-up.

The remaining case is $0< k <\ell < 2k$.
We have $r=k$. We need only $(\ell-k)$ times
blow-ups to desingularize the 
spectral curve at $P$. Let us take a close
look at \eqref{regular germ}. We take
$z^2=0$ to see the infinitesimal relation between
$\Sigma$ and $C_\infty$. Then the equation
becomes
\be
\label{tangent component}
c_1 z +x^k = 0,
\ee
which represents an irreducible component of 
the spectral curve near $P$ that is tangent to 
$C_\infty$. The degree of tangency is
$k$, hence it requires $k$-times blowing up to 
separate the proper transforms of $\Sigma$
and $C_\infty$.

Since the spectral curve $\Sigma$ is a double covering
of $C$, $\Gal(\Sigma/C) \isom \bZ/2\bZ$. The
involution $\sigma$ is its generator, which may or
may not extend to the whole $\overline{T^*C}$. 
Since we construct $\Sigma_{\min}\subset
Bl(\overline{T^*C})$ as a simply ramified
double covering over $C$ in 
Theorem~\ref{thm:geometric genus formula},
it is non-singular and there is a natural involution
on it. The additional blow-ups 
$$
\bar{\nu}:Bl(\overline{T^*C})\lrar
Bl_{\min}(\overline{T^*C})
$$
does not affect the proper transform 
$\widetilde{\Sigma}$ of $\Sigma_{\min}$, 
which also has  an involution $\tilde{\sigma}$.
The involution 
$\tilde{\sigma}$ agrees with $\sigma$ on the 
complement of the singular locus of $\Sigma$, 
thus satisfying $\nu\circ \sigma = \tilde{\sigma}
\circ \nu$.

This completes the proof.
\end{proof}

\section{The spectral curve as a divisor and its
minimal resolution}
\label{sect:divisors}

In this section we give a formula for the
minimal resolution $\Sigma_{\min}$
of the spectral curve $\Sigma$ as an 
element of the Picard group
$\Pic\big(Bl_{\min}(\overline{T^*C})\big)$.
We also give a genus formula for 
$\Sigma_{\min}$  in terms of its geometry 
in $Bl_{\min}(\overline{T^*C})$.
This gives another interpretation of the
invariant $\delta$ of
the genus formula \eqref{pg}.

The Picard group of $\overline{T^*C}$ is generated by the pull-back of $\Pic(C)$ and the zero-section $C_0$ 
of the cotangent bundle $T^*C$.
Denote by $F$ the fiber over a point of the morphism 
$\pi:\overline{T^*C} \lrar C$. We have the following intersection table:

\begin{equation}\label{intersection table 1}
\begin{split}
&F^2=0\\
&C_0^2=2g(C)-2\\
&FC_0=1.
\end{split}
\end{equation}
For the sake of simplicity, in what follows
 we denote  simply
\be
\label{convention}
\a F:=
\pi^*(\a)
\ee
 for any divisor $\a\in \Pic (C)$ (see \cite[Chapter V.2]{hartshorne}).
In this notation, the canonical divisor of 
$\overline{T^*C}$ is given by
\be
\label{canonical ruled}
K_{\overline{T^*C}} = -2C_0 + \beta F
\ee
with a divisor $\beta\in \Pic^{4g-4} (C)$ of degree $4g-4$.

In Section~\ref{sect:spectral} we identified a
 spectral curve $\Sigma$ as a divisor in the ruled surface $\overline{T^*C}$. With the convention of
\eqref{convention}, \eqref{Sigma in Pic} reads
\begin{equation}
\label{Sigma in alpha}
\Sigma=\alpha F+2C_0,
\end{equation}
where 
$$
\alpha = \Sigma C_\infty
\in \Pic (C_\infty) \isom \Pic(C)
$$ 
is a divisor of degree $a$ on $C$. 

Since we   deal only with the spectral curve of 
a rank $2$  Higgs bundle, its singularities are mild,
and  we can describe their resolution in detail.   A singular spectral curve has \emph{infinitely near} singularities, which require iterative sequence of blow-ups on the ruled surface to be resolved.  
For every singular point, say $P\in \Sigma$,
we introduce a sequence of blow-ups
\begin{equation}
\label{blow up P}
Bl_{i}^P(\overline{T^*C})\stackrel{\nu_{i}^P}{\longrightarrow} 
\cdots \stackrel{\nu_{2}^P}{\longrightarrow}Bl_1^P(\overline{T^*C})
\stackrel{\nu_{1}^P}{\longrightarrow}
Bl_0^P(\overline{T^*C})=\overline{T^*C}.
\end{equation}
Here, 
$
\nu_{j+1}^P: Bl_{j+1}^P(\overline{T^*C})\lrar
Bl_{j}^P(\overline{T^*C})$,
$j=1,\cdots,i-1$, is a blow-up
at the singular point
of $\Sigma_{j}
\subset Bl_{j}^P(\overline{T^*C)}$,
and $\Sigma_{j}$ is the proper transform
of $\Sigma_{j-1}\subset Bl_{j-1}^P(\overline{T^*C)}$
under the blow-up $\nu_j^P$. 
Each $\nu_j^P$ introduces an exceptional divisor $E_j^P$ with self-intersection $-1$ on $Bl_j^P(\overline{T^*C}).$ 
By abuse of notation, we also write
\be
\label{EjP}
E_j^P:={(\nu_{j+1}^P)}^*(E_j^P)-E_{j+1}^P, 
\ee
which is 
the proper transform of the divisor $E_j^P$ 
on $Bl_{j}^P(\overline{T^*C})$ by $\nu_{j+1}^P$.
On  $Bl_{i+1}^P(\overline{T^*C})$, we  have 
a chain of self-intersection
$-2$ curves with the following intersections
properties:
\begin{equation}
\label{intersection table 2}
\begin{split}
&{(E_1^P)}^2=\ldots={(E_i^P)}^2=-2\\
&{(E_{i+1}^P)}^2=-1\\
&E_{j-1}^PE_{j}^P=1, \textrm{ for all } 2\leq j\leq i+1\\
&E_j^PE_k^P=0, \textrm{ for all } 1\leq j, k \leq i+1 \textrm{ with } |j-k|> 1 .
\end{split}
\end{equation}
From
\eqref{Sigma in alpha}, we see that $\Sigma$
has  only infinitely near double singularities. 
We denote by
\begin{equation}\label{blow up 2P}
\Sigma_{i}\stackrel{\nu_{i}^P}{\longrightarrow} 
\cdots \stackrel{\nu_{2}^P}{\longrightarrow}\Sigma_1
\stackrel{\nu_{1}^P}{\longrightarrow}\Sigma_0=
\Sigma
\end{equation}
the sequence of proper transforms 
of $\Sigma$ under
\eqref{blow up P}. The \textbf{multiplicity} of 
the singularity $P\in \Sigma$ is defined to be
the least $i$ of \eqref{blow up 2P} such 
that $\Sigma_i$ is non-singular.

\begin{thm}
\label{thm:Sigma min}
Let $Q_1,Q_2,\dots,Q_c$ be the singularities of
$\Sigma$ not on the zero section $C_0$ or
on the divisor $C_\infty$, and 
$Q_{c+1},\dots,Q_s$ the singularities on
$C_\infty$. We denote by $n_k$  the 
multiplicity of $Q_k$, $k=1,2,\dots,s$.  
Let 
\be
\label{minimal resolution}
\begin{CD}
\Sigma_{\min}\subset
Bl_{\min}(\overline{T^*C})
@>{\nu_{\min}}>>{T^*C}
\end{CD}
\ee
 be the minimal
resolution of $\Sigma$ after performing the 
blow-ups at each singularity exactly as required,
which includes blow-ups on the singularities
on $C_0$. Then the genus of the smooth 
curve $\Sigma_{\min}$ is given by
\be
\label{Sigma min genus}
g(\Sigma_{\min}) = 2g-1 + \frac{N_0+N_\infty}{2}
-\sum_{k=1}^c n_k,
\ee
where $N_0$ (resp.\ $N_\infty$) is the number
of intersection points of $\Sigma_{\min}$
with the proper transform of 
$C_0$ (resp.\ $C_\infty$) in 
$Bl_{\min}(\overline{T^*C})$.

Denote by $E_j ^k$
the exceptional divisor of \eqref{EjP} for
$P=Q_k$, $k=1,2,\dots,s$. Then  as an
element of the  Picard group,
we have
\be
\label{Sigma min}
\Sigma_{\min} = 2C_0 + \a F -
2\sum_{k=1}^s  \sum_{j=1}^{n_k} j E_j^k
\in \Pic\left(Bl_{\min}(\overline{T^*C})
\right),
\ee
where $\a\in \Pic(C)$ is a divisor on $C$ of degree
\be
\label{a in min resolution}
a= N_\infty + 2\sum_{k=c+1}^s n_k.
\ee
\end{thm}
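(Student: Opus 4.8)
The plan is to read off all three formulas from the local resolution picture established in the proof of Theorem~\ref{thm:geometric genus formula}, combined with elementary intersection theory on the ruled surface. For the Picard class \eqref{Sigma min}: since $\Sigma$ is a double covering of $C$, completing the square in the fibre coordinate shows that the germ of $\Sigma$ at every singular point $P=Q_k$ has the form $v^2=f(s)$ with $\ord_s f\ge 2$, so $\Sigma$ has multiplicity exactly $2$ at $P$ and the first blow-up $\nu_1^P$ replaces $\Sigma$ by $(\nu_1^P)^*\Sigma-2E_1^P$. After that blow-up the germ becomes $v_1^2=f(s)/s^2$, whose residual singularity again sits at the origin, which lies on the exceptional curve $\{s=0\}$; hence the centres of \eqref{blow up P} form a chain of infinitely near points, and iterating $n_k$ times gives $\Sigma_{\min}=\nu^*\Sigma-2\sum_{j=1}^{n_k}e_j^k$, with $e_j^k$ the \emph{total} transform of the $j$-th exceptional divisor. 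Inverting the chain relations implicit in \eqref{EjP}, namely $e_j^k=E_j^k+E_{j+1}^k+\dots+E_{n_k}^k$, one obtains $\sum_j e_j^k=\sum_l l\,E_l^k$; summing over all singular points and substituting $\Sigma=2C_0+\a F$ from \eqref{Sigma in alpha} yields \eqref{Sigma min}.

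For the degree \eqref{a in min resolution}: $a=\deg\a=\Sigma\cdot C_\infty$, and one runs through the three Newton-polygon cases at each $p_j\in\supp(\Delta_\infty)$ exactly as in the proof of Theorem~\ref{thm:geometric genus formula}. At a singular point $Q_k\in C_\infty$ the local intersection number of $\Sigma$ with $C_\infty$ splits as $2n_k$, coming from the blow-ups needed to separate $\Sigma$ from $C_\infty$, plus the number of transverse points at which $\Sigma_{\min}$ eventually meets the proper transform of $C_\infty$; adding in the points where $\Sigma$ already meets $C_\infty$ transversally gives \eqref{a in min resolution}.

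For the genus \eqref{Sigma min genus} I would use adjunction on $Bl_{\min}(\overline{T^*C})$. Each stage of the blow-up contributes its exceptional divisor to the canonical class with coefficient $1$, so $K_{Bl_{\min}}=\nu^*K_{\overline{T^*C}}+\sum_{k,j}e_j^k$; combining this with the class of $\Sigma_{\min}$ from \eqref{Sigma min} and using that the total transforms $e_j^k$ are orthogonal to pullbacks, are mutually orthogonal, and satisfy $(e_j^k)^2=-1$, one finds
$$2g(\Sigma_{\min})-2=\Sigma_{\min}\cdot\bigl(\Sigma_{\min}+K_{Bl_{\min}}\bigr)=\Sigma\cdot\bigl(\Sigma+K_{\overline{T^*C}}\bigr)-2\sum_k n_k,$$
hence $g(\Sigma_{\min})=p_a(\Sigma)-\sum_k n_k$, the sum over all singular points of $\Sigma$. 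Since $p_a(\Sigma)=4g-3+a$ by \eqref{pa}, it remains to feed in \eqref{a in min resolution} and to express any singular points lying on $C_0$ through $N_0$: arguing as in the $C_\infty$ case, blowing up at such a point separates $\Sigma$ from $C_0$ after $n_k$ steps, so over those points $\widetilde{C_0}=\nu^*C_0-\sum e_j^k$ and $\Sigma_{\min}\cdot\widetilde{C_0}=\Sigma\cdot C_0-2\sum_{Q_k\in C_0}n_k=(4g-4+a)-2\sum_{Q_k\in C_0}n_k$, which equals $N_0$ once $\Sigma_{\min}$ meets $\widetilde{C_0}$ transversally. Substituting these relations into $g(\Sigma_{\min})=p_a(\Sigma)-\sum_k n_k$ and simplifying produces $2g-1+\half(N_0+N_\infty)-\sum_{k=1}^{c}n_k$. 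Alternatively one may invoke Riemann--Hurwitz for the double covering $\bar\pi=\pi\circ\nu_{\min}:\Sigma_{\min}\to C$, $g(\Sigma_{\min})=2g(C)-1+\half\#(\text{ramification points})$, and check that the $N_0,N_\infty,n_k$ combination reproduces the number $\delta$ of Theorem~\ref{thm:geometric genus formula}.

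The substantive work is entirely the local bookkeeping: matching the multiplicity $n_k$ with the number of blow-ups dictated by the Newton polygon in each of the cases of Theorem~\ref{thm:geometric genus formula}, tracking how those blow-ups interact with the proper transforms of $C_0$ and $C_\infty$ (in particular confirming that $\Sigma_{\min}$ meets them transversally at the relevant points so that the "number of intersection points" in the statement really equals the intersection number in $Bl_{\min}(\overline{T^*C})$), and, since the singular points of $\Sigma$ lying on $C_0$ are not separately indexed in the statement, verifying that their contributions are exactly what is absorbed into the count $N_0$ and, where needed, into the sum in \eqref{Sigma min}.
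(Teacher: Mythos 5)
Your proposal is correct and is essentially the paper's own argument: express $\Sigma_{\min}$ and $K_{Bl_{\min}(\overline{T^*C})}$ in the Picard group of the blown-up surface, apply adjunction to obtain $g(\Sigma_{\min})=p_a(\Sigma)-\sum n_k$ (summed over all double points, infinitely near ones included), and then convert this via the two intersection relations $\Sigma_{\min}\cdot C_0=N_0$ and $\Sigma_{\min}\cdot C_\infty=N_\infty$ into \eqref{Sigma min genus} and \eqref{a in min resolution}. Your total-transform bookkeeping ($e_j^k$ versus $E_j^k$) is merely a cleaner packaging of the paper's step-by-step computation in the proper-transform basis, and the checks you flag --- that $C_0$ and $C_\infty$ pass through the relevant infinitely near centers, and that $\Sigma_{\min}$ meets their proper transforms transversally so that the point counts $N_0,N_\infty$ equal intersection numbers --- are exactly what the paper absorbs, without further comment, into the intersection data \eqref{intersection table 2}--\eqref{intersection table 3}.
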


\begin{rem}
If $\Sigma$ is smooth, then $N_0-N_\infty = 4g-4$
and $a=N_\infty$. Therefore, 
\eqref{Sigma min genus} agrees with 
\eqref{pa}.
\end{rem}

\begin{proof}
Let us  denote by $P_1,..., P_t$ the 
 singular points of $\Sigma$ on the zero section $C_0$
with multiplicities $m_1,\dots,m_t$.  To avoid 
confusion, we denote by $G_{j}^k :=E_j^{P_k}$ the exceptional divisor
of \eqref{EjP} at $P=P_k$. The construction
of $\Sigma_{\min}$ and 
$Bl_{\min}(\overline{T^*C})$
requires also sequences of blow-ups at these points.
We use the same notation $C_0$ for  the proper
transform of the zero section via any of the 
blow-up appearing in this construction of
the minimal resolution. 

The Picard group 
$\Pic\left(Bl_{\min}(\overline{T^*C})
\right)$  is generated by $\Pic(C)$, the divisors $E_{j}^k$'s and $G_{j}^k$'s, and $C_0$. These generators   satisfy, in addition to \eqref{intersection table 2},  the following:
\begin{equation}\label{intersection table 3}
\begin{aligned}
&C_{0}^2=2g-2-\sum_{k=1}^{t}m_k\\
&C_{0}G_{m_k}=1, \text{ for every $1\leq k\leq t.$ }
\end{aligned}
\end{equation}
Since the singular points of the spectral curve are not in general position, to give an explicit expression for $\Sigma_{\min}$ as a divisor,
 we consider two separate cases.
\smallskip

\noindent
{(1)} Resolving singularities of $\Sigma$ on the zero section $C_0$.
\smallskip

\noindent
For a singular point $P_k$, the resolution 
$\Sigma_{m_k}$ is of the form
\begin{equation}
\label{Sigma k}
\Sigma_{m_k}=\alpha_k F+2C_{0},
\end{equation}
where $\alpha_k \in \Pic(C)$.
At each step of the blow-ups, the canonical divisor of 
$Bl_{j}^{P_k}(\overline{T^*C})$, for 
$j\le m_k$, is $K_j=(\nu_{j})^*(K_{j-1})+G_{j}$.
Therefore,
\begin{align*}
K_j&=(\nu_{j})^*(K_{j-1})+G_{j}\\
&=-2(C_0+G_j)-(\nu_{j})^*\left(\sum_{\ell=1}^{j-1}\ell G_\ell\right)+\beta F+G_j\\
&=-2C_0-\sum_{\ell =1}^{j-2}\ell G_\ell-(j-1)(G_{j-1}+G_{j})-G_{j}+\beta F\\
&=-2C_0+\beta F-\sum_{\ell=1}^{j}\ell G_\ell,
\end{align*}
where $\beta$ is the divisor 
of \eqref{canonical ruled}. 

\smallskip

\noindent 
(2) Resolving singularities of $\Sigma$ not on the zero section.
\smallskip

\noindent
We now consider the singular point $Q_k$.  The proper transform of $\Sigma$ under $n_k$ iterated blow-ups is
\begin{equation}
\label{spectral 2}
\Sigma_{n_{k}}=\Sigma- 2\sum_{i=1}^{n_k} \left(\sum_{j=1}^{i}E_{n_k-j+1}\right)
=\Sigma - 2\sum_{i=1}^{n_k}iE_{i}.
\end{equation}
The canonical divisor on the blown up ruled surface at the point $Bl_{i}^{Q_k}(\overline{T^*C})$ is 
\begin{align*}
K_i&=(\nu_{i})^*(K_{i-1})+E_{i}\\
&=-2C_0+\beta F+(\nu_{i})^*
\left(\sum_{j=1}^{i-1}jE_j\right)+E_i\\
&=-2C_0+\beta F+\sum_{j=1}^{i-2}jE_j+(i-1)(E_{i-1}+E_{i})+E_{i}\\
&=-2C_0+\beta F+\sum_{j=1}^{i}jE_j.
\end{align*}
Alter all these blowups, we obtain
the expression of the canonical divisor on $Bl_{\min}(\overline{T^*C})$:
\begin{equation}\label{canonical}
K_{Bl_{\min}(\overline{T^*C})}=-2C_0+\beta F+\sum_{k=1}^{s} \sum_{i=1}^{n_k}iE_{i}^{k}-\sum_{k=1}^{t}\sum_{i=1}^{m_k}iG_{i}^{k}.
\end{equation}
From \eqref{Sigma k} and \eqref{spectral 2} we
obtain 
\begin{equation}
\label{spectral min}
\Sigma_{\min}=(2C_0+\alpha F)-2\sum_{k=1}^{s}\sum_{i=1}^{n_k}iE_{i}^{k},
\end{equation}
where $\a$ is the sum of all $\a_k$ of
\eqref{Sigma k}.

Let us now turn our attention to 
determining the degree of $\a$.
We recall the genus formula
$$p_a(\Sigma_{\min})=\frac{\Sigma_{\min}\left(\Sigma_{\min}+K_{Bl_{\min}(\overline{T^*C})}\right)}{2}+1.$$
Equations
 \eqref{canonical} and \eqref{spectral min} yield
$$\Sigma_{\min}+K_{Bl_{\min}(\overline{T^*C})}=(\alpha+\beta)F - 
\sum_{k=1}^{s} \sum_{i=1}^{n_k}iE_{i}^{k}-
\sum_{k=1}^{t}\sum_{i=1}^{m_k}iG_{i}^{k}.
$$
Denoting $\deg \a = a$, the above gnus formula yields
\begin{align*}
2p_a(\Sigma_{\min})&-2
\\
&=2(a+4g-4)C_0F
-2\sum_{k=1}^{t}\sum_{i=1}^{m_k}iC_0G_{i}^{k}
+2\left(\sum_{k=1}^{s}
 \sum_{i=1}^{n_k}iE_{i}^{k}\right)^{2}
\\
&=2(a+4g-4)
-2\sum_{k=1}^{t}m_kC_0G_{m_k}^{k}
+2\sum_{k=1}^{s}
\left(\sum_{i=1}^{n_k-1}(-2)i^2-n_k^2+2\sum_{i=1}^{n_k-1}i(i+1)\right)
\\
&=2(a+4g-4)-2\sum_{k=1}^{t}m_k
+2\sum_{k=1}^{s}
\left(-n_k^2+2\sum_{k=1}^{n_k-1}i\right)
\\
&=2(a+4g-4)-2\sum_{k=1}^{t}m_k
-2\sum_{k=1}^{s}n_k.
\end{align*}
We therefore conclude that
\begin{equation}
\label{genus in m n}
g(\Sigma_{\min})=a+4g-3-\sum_{k=1}^{t}m_k
-\sum_{k=1}^{s}n_k.
\end{equation}

Since the proper transform of
$C_0$ on $Bl_{\min}(\overline{T^*C})$ does not
intersect with the exceptional divisors 
$E_{j}^{k}$'s, from  \eqref{spectral min} we have
$$
\Sigma_{\min}\cdot C_{0}=\left(\alpha F+2C_{0}
\right)C_{0}=N_{0}.
$$
This yields
\be
\label{a N0}
4g-4+a=\sum_{i=k}^{t} 2m_{k}+N_{0}.
\ee
The proper transform of $C_\infty$
on $Bl_{\min}(\overline{T^*C})$ is 
given by
\be
\label{pt C infinity}
C_{\infty}-\sum_{k=1}^{t}
\sum_{j=1}^{m_k}jE_{j}^{k},
\ee
which we also denote simply by $C_\infty$ if
there is no confusion. 
We recall that on $Bl_{\min}(\overline{T^*C})$
we have 
\begin{align*}
&{C_{\infty}}F=1,\qquad
{C_{\infty}}C_{0}=0,
\\
&C_\infty E_j^{k} = \begin{cases}
n_k \qquad j=n_k, k=c+1,\dots,s\\
0 \qquad \;\;\text{otherwise.}
\end{cases}
\end{align*}
Thus from the intersection of 
\eqref{spectral min} and $C_\infty$, we obtain
\be
\label{a proved}
a= N_\infty + 2\sum_{k=c+1}^s n_k.
\ee
This proves \eqref{a in min resolution}.

Substituting \eqref{a proved} in \eqref{a N0}, 
we obtain
$$
4g-4=2\sum_{k=1}^{t} m_{k}
-2\sum_{k=c+1}^{s} n_{k}+N_{0}-N_{\infty},
$$
which shows that $N_0\pm N_\infty$ is even. 
From \eqref{genus in m n} we have
\begin{align*}
g(\Sigma_{\min})&=(a+4g-3)-\sum_{k=1}^{t}m_k
-\sum_{k=1}^{c}n_k- \sum_{k=c+1}^{s}n_k
\\
&=1+N_0+\sum_{k=1}^{t}m_k
-\sum_{k=1}^{c}n_k- \sum_{k=c+1}^{s}n_k.
\end{align*}
From the above two equations, we obtain
$$
4g-4=2g(\Sigma_{\min})-2
+2\sum_{k=1}^{c}n_k-N_0-N_\infty,
$$
which yields
$$
g(\Sigma_{\min}) = 2g-1 + \frac{N_0+N_\infty}{2}
-\sum_{k=1}^c n_k.
$$
This completes the proof of 
Theorem~\ref{thm:Sigma min}.
\end{proof}

\section{Construction of the quantum curve}
\label{sect:qc construction}

When we say that the quantization of a characteristic
equation 
\be
\label{algebraic}
 \eta^2 -\pi^*\tr(\phi) \eta + \pi^*\det(\phi) = 0
\ee
of a Higgs field $\phi$ is a differential Žquation
\be
\label{quantum equation}
\left(\left(\hbar\frac{d}{dx}\right)^2
-\tr(\phi(x))\left (\hbar\frac{d}{dx}\right) 
+ \det(\phi(x))\right)\Psi(x,\hbar) = 0,
\ee
it may sound obvious. The point is that since $x$
and the differential operator $d/dx$ do not commute,
there are many different differential equations
other than \eqref{quantum equation}
that correspond to the starting algebraic equation
\eqref{algebraic}.
The mechanism we use to identify the 
correct formula for the quantization is 
the topological recursion. 
In this section, 
first we formulate our main theorem of 
quantization. Then 
we give the definition of the topological 
recursion, using the desingularization of the 
spectral curve \eqref{blow-up}, 
constructed in Theorem~\ref{thm:nonsingular} and Definition~\ref{def:Bl}.
The rest of the section is devoted to proving 
the main theorem.

\subsection{The main theorem}
\label{sub:main}

The main theorem of this paper is the construction 
of the quantum curve guided by the asymptotic 
expansion of its solutions, which is
obtained by the topological
recursion.

\begin{thm}[Main Theorem]
\label{thm:main}
Let $C$ be a smooth projective curve of an
arbitrary genus, and $(E,\phi)$ a rank $2$ Higgs
bundle consisting of a topologically
trivial vector bundle $E$ and   an arbitrary
meromorphic Higgs field $\phi$.
We denote by $\Sigma\subset \overline{T^*C}$
 the spectral
curve defined by \eqref{spectral curve}.
Then there exists a
Rees $D$-module $\widetilde{\cM}$
on $C$ whose semi-classical 
limit agrees with the spectral curve
$\Sigma $. On every coordinate chart $U$ of
$C$ with a local coordinate $x$, 
a generator of $\widetilde{\cM}$
is given by a differential operator
\be
\label{qc main}
P(x,\hbar) = \left(\hbar\frac{d}{dx}\right)^2
-\tr\phi(x) \left(\hbar\frac{d}{dx}\right) +\det \phi(x)
\in \widetilde{D_C}(U),
\ee
so that we have
\be
\label{local m}
\widetilde{\cM} = 
\widetilde{D_C}(U)\big/
\widetilde{D_C}(U)\cdot P(x,\hbar).
\ee

Let $q\in C$ be one of the 
 critical 
values of the projection $\pi:\Sigma \lrar C$
that corresponds to a branch point of
the desingularized covering 
$\tilde{\pi}:\widetilde{\Sigma}\lrar C$. Then
 there exists a coordinate  neighborhood 
$U_q\subset C$ of $q$ with a coordinate $x$ centered 
at $q$ such that the following holds.
\begin{enumerate}
\item
For an arbitrary point $p\in U_q$,
there is a contractible open neighborhood 
$V_p\subset U_q$ of $p$ that does not 
contain $q$.

\item 
Choose an eigenvalue $\a$ of $\phi$ on $V_p$. 
Then there is 
an all-order asymptotic 
solution to the differential equation
\be
\label{PPsi alpha}
P(x,\hbar)\Psi^\a (x,\hbar) = 0
\ee
that is defined on $V_p$.
\item The asymptotic expansion is given by
\be
\label{WKB alpha}
\Psi^\a(x,\hbar) = 
\exp\left(\sum_{m=0}^\infty \hbar^{m-1}
S_m^\a (x)
\right).
\ee
Here, 
\begin{itemize}
\item the $0$-th term $S_0^\a(x)$  is determined
by solving \eqref{scl};
\item the first term $S_1^\a(x)$ is determined by
solving \eqref{consistency}; 
\item $S_m^\a(x)$ for $m\ge 2$ is given by
\be
\label{S in F alpha}
S_m^\a(x) = \sum_{2g-2+n=m-1} 
\frac{1}{n!} F_{g,n}^\a(x); 
\ee
\item  the free energies 
$F_{g,n}(z_1,\cdots,z_n)$ for $2g-2+n>1$
are determined 
by the differential recursion
\eqref{differential TR};
\item  and each $F_{g,n}^\a(x)$ is the 
\textbf{principal
specialization} of the restriction of
the  free energy 
 to the 
open subset $V_\a\subset  \Sigma$
of $\Sigma$ that corresponds to the eigenvalue 
$\a$ on $V_p$, which we identify with $V_p$ 
by $\pi:V_\a
\overset{\sim}{\lrar} V_p$.

\end{itemize}
\end{enumerate}
\end{thm}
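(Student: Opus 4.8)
The plan is to prove the two assertions of the theorem in turn. For the first --- existence of the Rees $\widetilde{\cD_C}$-module $\widetilde{\cM}$ with the stated local generator and semi-classical limit --- I would argue as follows. By \eqref{a1}--\eqref{a2}, $-\tr\phi$ and $\det\phi$ are global meromorphic sections of $K_C(D)$ and $K_C^{\tensor 2}(2D)$, so on overlaps of coordinate charts the coefficients $a_1=-\tr\phi$ and $a_2=\det\phi$ of the local operator \eqref{qc main} transform exactly as demanded by \eqref{al(x) condition}, with poles allowed along $D$. Hence the local presentations \eqref{local m} glue into a Rees $\widetilde{\cD_C}$-module on $C\setminus\supp(D)$, and its meromorphic extension (Definition~\ref{Def:QC meromorphic}) is the desired $\widetilde{\cM}$. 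Its local symbol is $y^2-\tr\phi(x)\,y+\det\phi(x)=\det\big(y-\phi(x)\big)$, whose divisor of zeros in $\overline{T^*C}$ is, by \eqref{char poly}--\eqref{spectral curve}, exactly $\Sigma$; thus the semi-classical limit of $\widetilde{\cM}$ is $\Sigma$. The main work is the WKB expansion.

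For that, fix a branch point $q$ of $\tilde\pi:\widetilde{\Sigma}\lrar C$, a coordinate disc $U_q$ centered at $q$ containing no other branch point, and for $p\in U_q$ a contractible $V_p\subset U_q$ with $q\notin V_p$; then the two eigenvalues $\a,\a'$ of $\phi$ are distinct single-valued meromorphic functions on $V_p$ with $\a+\a'=\tr\phi$ and $\a\a'=\det\phi$. Substituting \eqref{WKB alpha} into $P(x,\hbar)\Psi^\a=0$, dividing by $\Psi^\a$, and sorting by powers of $\hbar$ yields
\begin{align}
&\big((S_0^\a)'\big)^2-\tr\phi\,(S_0^\a)'+\det\phi=0, \label{wkb-h0}\\
&\big(2(S_0^\a)'-\tr\phi\big)(S_1^\a)'=-(S_0^\a)'', \label{wkb-h1}\\
&\big(2(S_0^\a)'-\tr\phi\big)(S_m^\a)'=-(S_{m-1}^\a)''-\sum_{i+j=m,\ i,j\ge 1}(S_i^\a)'(S_j^\a)'\quad(m\ge 2). \label{wkb-hm}
\end{align}
Here \eqref{wkb-h0} is \eqref{scl}, solved by $dS_0^\a=\a$; \eqref{wkb-h1} is \eqref{consistency}, which, since $2(S_0^\a)'-\tr\phi=\a-\a'$ does not vanish on $V_p$, gives $S_1^\a=-\tfrac12\log(\a-\a')$ up to a constant; and each \eqref{wkb-hm} is a first-order linear ODE fixing $S_m^\a$ up to a constant once $S_1^\a,\dots,S_{m-1}^\a$ are known. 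So \eqref{WKB alpha} formally solves \eqref{PPsi alpha} precisely when the $S_m^\a$ obey \eqref{wkb-h0}--\eqref{wkb-hm}, and the remaining task is to verify this for the functions defined by \eqref{S in F alpha}.

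This verification is where the topological recursion enters. Using the geometry \eqref{blow-up} and the initial data \eqref{W0102}, the integral topological recursion \eqref{integral TR} on $\widetilde{\Sigma}$ --- whose residue points are the branch points of $\tilde\pi$, not the cusps of $\Sigma$ --- produces $W_{g,n}$ for $2g-2+n>0$, and its residue evaluation at each branch point is equivalent, as in \cite[(4.7)]{DM2014}, to the differential recursion \eqref{differential TR}. I would then show that the principal specialization of \eqref{differential TR} --- setting all arguments equal to the local section $z=z(x)$ of $\tilde\pi$ attached to $\a$ and summing $\tfrac1{n!}F_{g,n}$ over $2g-2+n=m-1$ --- reproduces \eqref{wkb-h0}--\eqref{wkb-hm} term by term: $W_{0,1}$ gives $dS_0^\a=\a$; the diagonal regularization of $W_{0,2}=d_1d_2\log E_{\widetilde{\Sigma}}$ gives $S_1^\a=-\tfrac12\log(\a-\a')$; and for $m\ge 2$ the $d_1$-derivative term of \eqref{differential TR} assembles into $(S_{m-1}^\a)''$, the stable splitting terms $F_{g_1,n_1}\cdot F_{g_2,n_2}$ together with the genus-lowering term $F_{g-1,n+1}$ assemble into $\sum_{i+j=m}(S_i^\a)'(S_j^\a)'$, and the recursion kernel contributes the prefactor $1/\big(2(S_0^\a)'-\tr\phi\big)=1/(\a-\a')$, this being the local form of the Eynard--Orantin kernel near a branch point of $\tilde\pi$. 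The constants of integration are left free, as in the Remarks after the Main results. Away from $q$ the identity is local on $\widetilde{\Sigma}$, and after passing to the normalization coordinate it reduces to the computation carried out for $\tr\phi=0$ in \cite{DM2014}; the unstable ranges $(g,n)=(0,1),(0,2)$ require the same special treatment as there.

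The main obstacle is precisely this term-by-term matching, for two reasons. First, one must carry the $\tr\phi$-contribution consistently through the principal specialization: it is this matching that \emph{forces} the ordering of $x$ and $\hbar\,d/dx$ in \eqref{qc main}, since any other ordering would introduce $\hbar$-terms absent from the output of \eqref{differential TR} --- so the recursion selects the quantization. Second, one must check that the singularities of $\Sigma$ cause no trouble: the recursion lives on the smooth $\widetilde{\Sigma}$, the cusps of $\Sigma$ are irrelevant, only the branch points of $\tilde\pi$ matter, but the choices of $U_q$, $V_p$ and of the local section $z(x)$ must be made compatibly with the global structure of \eqref{blow-up} established in Theorem~\ref{thm:nonsingular} and Definition~\ref{def:Bl}. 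Finally, \eqref{WKB alpha} is produced as a formal all-order series; that it is the genuine asymptotic expansion of an analytic solution of \eqref{PPsi alpha} on $V_p$ is the classical fact that a linear ODE with a parameter admits true solutions asymptotic to its formal WKB solutions in appropriate sectors, and needs no input from the topological recursion.
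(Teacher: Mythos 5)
Your proposal follows essentially the same route as the paper: define the Rees module directly from the globally defined $\tr\phi$ and $\det\phi$, derive the WKB hierarchy \eqref{scl}--\eqref{h m+1}, and identify it with the principal specialization of the differential recursion \eqref{differential TR}, deferring the residue evaluation and term-by-term matching to the argument of \cite{DM2014}. The only substantive step the paper carries out explicitly that you leave implicit is the construction, via the blow-up sequence of Theorem~\ref{thm:nonsingular}, of the Galois-invariant normalization coordinate $x=c_Q(z^2)z^2$, $y=zx^\mu-\tfrac12 a_1(x)$ near each branch point of $\tilde{\pi}$ --- including those arising from odd cusps of $\Sigma$ on $C_\infty$, which are not irrelevant but require the same local treatment --- and it is this local form that justifies your claimed reduction to the $\tr\phi=0$ computation.
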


\begin{rem}
Since $\tr\, \phi$ and $\det \phi$ are globally
defined meromorphic sections of
$K_C$ and $K_C^{\tensor 2}$, 
respectively, the existence of
the Rees $D$-module $\widetilde{\cM}$ is
obvious. We can simply 
\emph{define} it by 
\eqref{qc main} and \eqref{local m}.
Therefore, the point here is that the 
differential operator $P(x,\hbar)$ has
a particular solution that is prescribed in 
the main theorem.
\end{rem}

\subsection{The topological recursion and the WKB 
method}
\label{sub:TR}

Let us start with defining each terminology 
in the main theorem.

Although the topological recursion can 
be formulated for an arbitrary ramified
covering of a base curve $C$ of any degree, 
for the purpose of quantization in this paper, we need a
Galois covering, and we also need to 
calculate the residues in the formula. 
Therefore, we deal with the 
topological recursion only for a 
 covering of degree $2$ in this paper.

\begin{Def}[Integral topological 
recursion for a degree $2$ 
covering]
Let $C$ be a non-singular projective algebraic
curve, and $\tilde{\pi}:\widetilde{\Sigma}\lrar C$ a
degree $2$ covering by another non-singular
curve $\widetilde{\Sigma}$. We denote by 
$R$ the
ramification divisor of $\tilde{\pi}$. In this
case the covering
$\tilde{\pi}$ is a Galois covering with
the Galois group 
$\bZ/2\bZ = \la \tilde{\sigma} \ra$,
and $R$ is the fixed-point divisor of the involution
$\tilde{\sigma}$.
The \textbf{integral topological recursion}
is an inductive 
mechanism of constructing meromorphic
differential forms $W_{g,n}$ on the Hilbert scheme 
$\widetilde{\Sigma}^{[n]}$
of  $n$-points on $\widetilde{\Sigma}$
for all $g\ge 0$ and 
$n\ge 1$ in the \emph{stable range}
$2g-2+n>0$, from  given initial data $W_{0,1}$
and $W_{0,2}$. 

\begin{itemize}
\item $W_{0,1}$ is a meromorphic $1$-form
on $\widetilde{\Sigma}$.

\item $W_{0,2}$ is defined to be
\begin{equation}
\label{W02}
W_{0,2}(z_1,z_2) = d_1d_2 
\log E_{\widetilde{\Sigma}}(z_1,z_2),
\end{equation}
where $E_{\widetilde{\Sigma}}(z_1,z_2)$ is 
the normalized Riemann prime
form on 
$\widetilde{\Sigma}\times \widetilde{\Sigma}$
(see \cite[Section~2]{DM2014}).
\end{itemize}
Let $\omega^{a-b}(z)$ be a normalized
Cauchy kernel on $\widetilde{\Sigma}$,
which has simple poles at $z=a$ of residue $1$ 
and at $z=b$ of residue $-1$. 
Then (see \cite[Section~2]{DM2014})
$$
d_1 \omega^{z_1-b}(z_2) = W_{0,2}(z_1,z_2).
$$
Define
\begin{equation}
\label{Omega}
\Omega := \tilde{\sigma}^*W_{0,1}-W_{0,1}.
\end{equation}
Then $\tilde{\sigma}^*\Omega = -\Omega$, hence
$\supp (R)\subset \supp(\Omega)$,
where $\supp(\Omega)$ denotes the support of both
zero and pole divisors of $\Omega$.
The inductive formula of the topological 
recursion is then given by the following:
\begin{multline}
\label{integral TR}
W_{g,n}(z_1,\dots,z_n)
=\half \frac{1}{2\pi \sqrt{-1}} 
\sum_{p\in \supp(\Omega)}
\oint_{\gam_p}\frac{\omega^{\tilde{z}-z}(z_1)}
{\Omega(z)}\\
\times
\left[W_{g-1,n+1}(z,\tilde{z}, z_2,\dots,z_n)
+\sum_{\substack{g_1+g_2=g\\
I\sqcup J=\{2,\dots,n\}}}^{\rm{No }\; (0,1)}
W_{g_1,|I|+1}(z,z_I)W_{g_2,|J|+1}(\tilde{z},z_J)
\right].
\end{multline}
Here, 
\begin{itemize}
\item $\gam_p$ is a positively oriented small 
loop around a  point $p\in \supp (\Omega)$;
\item the integration is taken with respect to  
 $z\in \gam_p$ for each $p\in \supp (\Omega)$;
\item $\tilde{z} = \tilde{\sigma}(z)$ is the Galois
conjugate of $z\in \widetilde{\Sigma}$;
\item the operation 
$1/\Omega$ denotes the contraction of the
meromorphic vector field dual to the $1$-form
$\Omega$, considered
as a meromorphic section of
$K_{\widetilde{\Sigma}}^{-1}$;
\item ``No $(0,1)$'' means that
$g_1=0$ and $I=\emptyset$, or $g_2=0$ and
$J=\emptyset$, are excluded in the summation;
\item the sum runs over all partitions of $g$ and
set partitions of $\{2,\dots,n\}$, other than those 
containing the $(0,1)$ geometry; 
\item $|I|$ is the cardinality of the subset 
$I\subset \{2,\dots,n\}$; and
\item $z_I=(z_i)_{i\in I}$.
\end{itemize}
\end{Def}

The passage from the topological recursion 
\eqref{integral TR} to the quantum curve
\eqref{Sch} is the evaluation of the
residues in the formula. 

\begin{Def}[Free energies]
\label{def:free energy}
The \textbf{free energy} of type $(g,n)$ is
a function $F_{g,n}(z_1,\dots,z_n)$
defined on the universal covering
$\cU^n $ of $\widetilde{\Sigma}^n$ such that
$$
d_1\cdots d_n F_{g,n} = W_{g,n}.
$$
\end{Def}

\begin{rem}
The free energies may contain 
logarithmic singularities, since it is an integral
of a meromorphic function. 
For example, $F_{0,2}$ is the Riemann prime
form itself considered as a function on 
$\cU^2$, which has logarithmic singularities 
along the diagonal \cite[Section~2]{DM2014}.
\end{rem}

\begin{Def}[Differential  recursion for
a degree $2$ covering]
The \textbf{differential  
recursion} is the
following partial differential equation
for all $(g,n)$ subject to $2g-2+n\ge 2$:
\begin{multline}
\label{differential TR}
d_1 F_{g,n}(z_1,\dots,z_n)
\\
= \sum_{j=2}^n 
\left[
\frac{\omega^{z_j-\sigma(z_j)}
(z_1)}{\Omega(z_1)}\cdot 
d_1F_{g,n-1}\big(z_{[\hat{j}]}\big)
-
\frac{\omega^{z_j-\sigma(z_j)}
(z_1)}{\Omega(z_j)}\cdot 
d_jF_{g,n-1}\big(z_{[\hat{1}]}\big)
\right]
\\
+\frac{1}{\Omega(z_1)}
d_{u_1}d_{u_2}
\left.
\left[F_{g-1,n+1}
\big(u_1,u_2,z_{[\hat{1}]}\big)
+\sum_{\substack{g_1+g_2=g\\
I\sqcup J=[\hat{1}]}}^{\text{stable}}
F_{g_1,|I|+1}(u_1,z_I)F_{g_2,|J|+1}(u_2,z_J)
\right]
\right|_{\substack{u_1=z_1\\u_2=z_1}}.
\end{multline}
Here, $1/\Omega$ is again the contraction operation,
and the  index subset $[\hat{j}]$ denotes
the exclusion of $j\in \{1,2,\dots,n\}$.
\end{Def}

\begin{rem}
As pointed out in \cite[Remark 4.8]{DM2014}, 
\eqref{differential TR} is a coordinate-free 
equation, written in terms of exterior differentiations
and the contraction operation
on the universal covering of $\widetilde{\Sigma}$.
\end{rem}

\begin{thm}
Let $\varpi: \cU\lrar \widetilde{\Sigma}$
 be the universal covering of
$\widetilde{\Sigma}$. 
Suppose that $F_{g,n}$ 
for $2g-2+n>0$ are  globally meromorphic
on $\cU^{[n]}$ with poles 
located only along the divisor of 
$\cU^{[n]}$ when one of the factors 
lies in the pull-back
divisor $\varpi^*(\Omega)_0$ of zeros of 
$\Omega$.  Define
$W_{g,n} := d_1\cdots d_n F_{g,n}$. If 
$F_{g,n}$'s satisfy the differential  
recursion \eqref{differential TR}, then 
$W_{g,n}$'s satisfy the integral topological 
recursion \eqref{integral TR}. 
\end{thm}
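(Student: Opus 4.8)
The plan is to prove the implication by running an induction on $2g-2+n$ and reading the integral recursion \eqref{integral TR} ``backwards'' as a residue computation, reversing the passage from \eqref{integral TR} to \eqref{differential TR} that was carried out in \cite[Section~4]{DM2014}. First I would apply the exterior derivatives $d_2\cdots d_n$ to both sides of the differential recursion \eqref{differential TR}. The left-hand side becomes $W_{g,n}(z_1,\dots,z_n)$ by definition of the free energies, and on the right-hand side every occurrence of $d_1F_{g,n-1}$, $d_jF_{g,n-1}$, $F_{g-1,n+1}$, and the stable products $F_{g_1,|I|+1}F_{g_2,|J|+1}$ turns into the corresponding lower forms $W_{g,n-1}$, $W_{g-1,n+1}$, $W_{g_1,|I|+1}W_{g_2,|J|+1}$, because $d_2\cdots d_n$ commutes with the operations $\frac1{\Omega(z_1)}$, with multiplication by $\frac{\omega^{z_j-\sigma(z_j)}(z_1)}{\Omega(z_j)}$, and with the diagonal restriction $d_{u_1}d_{u_2}\big|_{u_1=u_2=z_1}$. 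The outcome of Step~1 is a closed ``local'' formula expressing $W_{g,n}(z_1,\dots,z_n)$ in terms of the lower $W$'s, the kernels $\omega^{z_j-\sigma(z_j)}$, and $1/\Omega$.

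The second and main step is to recognize this local formula as the evaluation of the contour integral on the right-hand side of \eqref{integral TR}. I would fix $(z_1,\dots,z_n)$ and regard
$$\cI(z)=\frac{\omega^{\tilde z-z}(z_1)}{\Omega(z)}\left[W_{g-1,n+1}(z,\tilde z,z_2,\dots,z_n)+\sum_{\substack{g_1+g_2=g\\ I\sqcup J=\{2,\dots,n\}}}^{\text{No }(0,1)}W_{g_1,|I|+1}(z,z_I)W_{g_2,|J|+1}(\tilde z,z_J)\right]$$
as a meromorphic $1$-form in $z$ on the compact curve $\widetilde\Sigma$, where $\tilde z=\tilde\sigma(z)$. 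By the hypothesis of the theorem the lower free energies are globally meromorphic on the universal covers with poles only along the pullback of $(\Omega)_0$; hence the bracket in $\cI(z)$ has poles only at $z\in(\Omega)_0$ (already absorbed by $1/\Omega$), at $z=z_j$ and $z=\sigma(z_j)$ for $j\ge 2$ (from the diagonal pole of $W_{0,2}$ carried by the $(0,2)$ summands — the ``No $(0,1)$'' restriction excluding the genuinely divergent $W_{0,1}$ contributions), while the Cauchy kernel $\omega^{\tilde z-z}(z_1)$ contributes poles at $z=z_1$ and $z=\tilde z_1$. Applying the residue theorem on $\widetilde\Sigma$ ($\sum_{\text{all poles}}\mathrm{Res}\,\cI=0$) shows that the residue sum over $\supp(\Omega)$ — which is precisely the right-hand side of \eqref{integral TR}, up to the overall factor $\tfrac12$ — equals minus the residues at $z=z_1,\tilde z_1$ and at $z=z_j,\sigma(z_j)$. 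Evaluating these last residues, using the reproducing property $\tfrac1{2\pi\sqrt{-1}}\oint_{\gam_{z_1}}\omega^{\tilde z-z}(z_1)\,\beta(z)=\beta(z_1)$ of the normalized Cauchy kernel, the identities $d_1\omega^{z_1-b}(z_2)=W_{0,2}(z_1,z_2)$ of \eqref{W02} and $\tilde\sigma^*\Omega=-\Omega$ of \eqref{Omega} (the latter producing the factor $\tfrac12$ after the $z\leftrightarrow\tilde z$ symmetrization), reproduces term by term the local formula obtained in Step~1. Matching the two gives the desired conclusion.

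The hard part will be the bookkeeping in Step~2: verifying that the bracket of $\cI(z)$ acquires no spurious pole along $\supp(\Omega)$, in particular along the ramification divisor $R$, where $z$ and $\tilde z$ collide. At such a point $W_{0,2}(z,\tilde z)$ and the kernel $\omega^{\tilde z-z}(z_1)/\Omega(z)$ each become singular, and one must check that in $W_{g-1,n+1}(z,\tilde z,\dots)+\sum W\,W$ the divergent principal parts in the $z$-variable cancel, so that the residues at $R$ are genuinely controlled by $1/\Omega$ alone. This is the one place where the meromorphic generalization differs from the holomorphic case of \cite{DM2014}: $\supp(\Omega)$ now also contains the poles of $W_{0,1}=\tilde i^*\nu^*\eta$ coming from the divisor at infinity, and at those points $1/\Omega$ has zeros rather than poles, so one must argue — again from the ``No $(0,1)$'' restriction and the assumed pole loci of the $F_{g,n}$ — that $\cI(z)$ is regular there and contributes nothing. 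Once these pole cancellations are established, the remaining residue identifications are the routine computation of \cite[Section~4]{DM2014}, which I would cite rather than reproduce.
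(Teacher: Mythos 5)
Your proposal is correct and is essentially the paper's own argument: the paper proves this theorem by a one-line appeal to \cite[Theorem~4.7]{DM2014}, whose proof is exactly the contour-deformation/residue evaluation you describe (differentiate the differential recursion, then convert the residue sum over $\supp(\Omega)$ into residues at $z_1,\tilde{\sigma}(z_1)$ and $z_j,\tilde{\sigma}(z_j)$ using the hypothesis that the poles of the $F_{g,n}$ lie only over $(\Omega)_0$, so that the points of $\supp(\Omega)$ where $1/\Omega$ vanishes contribute nothing). Your sketch, including the deferral of the detailed bookkeeping to \cite[Section~4]{DM2014}, is if anything more explicit than the paper's proof.
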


Although the context of the statement is slightly
different, the proof is essentially the same as that
of
\cite[Theorem~4.7]{DM2014}.

Now let us consider a spectral curve 
$\Sigma\subset 
\overline{T^*C}$
 of \eqref{spectral curve}
defined by a pair of meromorphic sections
$a_1=-\tr\phi$ of $K_C$ and 
$a_2=\det\phi$ of $K_C^{\tensor 2}$.
Let
$\widetilde{\Sigma}$ be the desingularization of 
$\Sigma$ in \eqref{blow-up}.
We  apply the topological recursion
 \eqref{integral TR}
to the covering $\tilde{\pi}:\widetilde{\Sigma}\lrar C$
of \eqref{pi-tilde}. The geometry of the 
spectral curve $\Sigma$ provides us with a
canonical choice of the initial differential forms
\eqref{W0102}. 
At this point we pay a special attention that 
the topological recursions \eqref{integral TR}
and \eqref{differential TR} are both defined 
on the spectral curve 
$\widetilde{\Sigma}$, while we wish to 
construct a Rees $D$-module on $C$. 
Since the free energies are defined on the
universal covering of $\widetilde{\Sigma}$,
we need to have a mechanism to relate a
coordinate on the desingularized spectral curve
and that of the base curve $C$.

Take an arbitrary point $p\in C\setminus 
\supp(\Delta)$, and a local coordinate $x$ around 
$p$. 
Here, $\Delta$ is the discriminant divisor 
\eqref{discriminant}. By choosing a small 
disc $V$ 
around $p$, we can make the inverse image of 
$\tilde{\pi}:\widetilde{\Sigma}\lrar C$ 
consisting of two isomorphic discs. Since $V$ is
away from the critical values of $\tilde{\pi}$,
the inverse image consists of two discs in 
the original spectral curve $\Sigma$. 
Note that we choose an eigenvalue $\a$ of
$\phi$ on $V$ in Theorem~\ref{thm:main}. 
We are thus specifying one of the inverse image discs
here. Let us name the disc $V_\a$ that
corresponds to 
$\a$.

At this point apply the WKB analysis to the differential
equation \eqref{PPsi alpha} with 
 the WKB expansion of the solution
\begin{equation}
\label{WKBa}
\Psi^\a (x,\hbar) = \exp\left(\sum_{m=0}^\infty
\hbar^{m-1} S_m\big(x(z)\big)\right) = 
\exp F^\a(x,\hbar),
\end{equation}
where we choose a coordinate $z$ of $V_\a$
so that the function $x = x(z)$ represents the 
projection $\pi:V_\a \lrar V$.
The equation $P\Psi^\a = P e^{F^\a} = 0$ reads
\begin{equation}
\label{F alpha}
\hbar^2 \frac{d^2}{dx^2}F^\a + 
\hbar^2\frac{dF^\a}{dx}
\frac{dF^\a}{dx} +a_1 \hbar \frac{dF^\a}{dx} +a_2=0.
\end{equation}
The $\hbar$-expansion of \eqref{F alpha} 
gives
\begin{align}
\label{scl}
&\hbar^0{\text{-terms}}:
\quad (S_0'(x))^2 +a_1S_0'(x) + a_2=0,
\\
\label{consistency}
&\hbar^1{\text{-terms}}:\quad
2S_0'(x)S_1'(x) + S_0''(x)+a_1S_1'(x)=0,
\\
\label{h m+1}
&\hbar^{m+1}{\text{-terms}}:\quad
S_m''(x) +\sum_{a+b=m+1}
S_a'(x)S_b'(x)+a_1S_{m+1}'(x)=0, \quad m\ge 1,
\end{align}
where $'$ denotes the $x$-derivative.
The WKB method is to solve these equations
iteratively and find $S_m(x)$ for all $m\ge 0$. Here,
\eqref{scl} is the
\textbf{semi-classical limit}
of \eqref{PPsi alpha}, and 
\eqref{consistency} is the 
\emph{consistency condition} we need to solve
the WKB expansion.
Since the $1$-form $dS_0(x)$ is a local section
of $T^*C$, we identify $y=S_0'(x)$. Then 
\eqref{scl} is the local expression of the 
spectral curve equation 
\eqref{spectral curve general}.
This expression is the same everywhere
for $p\in C\setminus \supp(\Delta)$. We note 
$a_1$ and $a_2$ are globally defined.
Therefore, we
recover the spectral curve $\Sigma$ from 
the differential operator \eqref{qc main}.

The topological recursion provides a closed
formula for each $S_m(x)$.

\begin{thm}[Topological recursion and WKB]
Let us determine $S_0(x)$ and $S_1(x)$ from 
the semi-classical limit and the consistency condition.
Then the \textbf{principal specialization}
of \eqref{differential TR} is 
equivalent to \eqref{h m+1}.
\end{thm}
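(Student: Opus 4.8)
The plan is to reduce the statement to a one‑variable computation on the disc $V_\a\subset\widetilde{\Sigma}$ that maps isomorphically onto $V_p\subset C\setminus\supp(\Delta)$ under $\tilde{\pi}$, with coordinate $z$ on $V_\a$ and $x=x(z)$ on $V_p$. On $V_\a$ one has $W_{0,1}=\tilde{i}^*\nu^*\eta=\a\,dx=S_0'(x)\,dx$ by \eqref{W0102}, while its Galois conjugate is $\tilde{\sigma}^*W_{0,1}=(-a_1-\a)\,dx$, so that
\[
\Omega=\tilde{\sigma}^*W_{0,1}-W_{0,1}=-(2\a+a_1)\,dx=-(2S_0'+a_1)\,dx .
\]
Thus the combination $2S_0'+a_1$ that occurs in the left‑hand sides of \eqref{consistency} and \eqref{h m+1} — after collecting the two $S_0'S_{m+1}'$ terms of the convolution $\sum_{a+b=m+1}S_a'S_b'$ with the term $a_1S_{m+1}'$ — is exactly $-\Omega/dx$. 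Rewriting \eqref{h m+1} accordingly,
\[
(2S_0'+a_1)\,S_{m+1}'=-\,S_m''-\!\!\sum_{\substack{a+b=m+1\\ a,b\ge 1}}\!\! S_a'\,S_b',\qquad m\ge 1,
\]
so that \emph{dividing by $2S_0'+a_1$}, i.e. contracting by $1/\Omega$, is precisely the operation sitting in front of every term of \eqref{differential TR}. I would also note that, by \eqref{S in F alpha}, the index set $\{(g,n):2g-2+n=m-1\}$ labels exactly the summands of $S_m$.

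Next I would pass to a generating function. Set $F^\a(z,\hbar):=\sum_{g,n}\frac{\hbar^{2g-2+n}}{n!}F_{g,n}(z,\dots,z)$, the principal specialization of all free energies; by \eqref{S in F alpha} and the definitions of $S_0,S_1$ this equals $\sum_{m\ge 0}\hbar^{m-1}S_m(x(z))$, so $\Psi^\a=\exp F^\a$ and the whole family \eqref{scl}, \eqref{consistency}, \eqref{h m+1} is equivalent to the single relation $\hbar^2\bigl(\tfrac{d^2F^\a}{dx^2}+(\tfrac{dF^\a}{dx})^2\bigr)+a_1\hbar\,\tfrac{dF^\a}{dx}+a_2=0$, i.e. to $P(x,\hbar)\Psi^\a=0$. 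Hence the theorem becomes the assertion that the system obtained by principally specializing \eqref{differential TR} and summing over each fixed value of $2g-2+n$ — concretely, applying $\sum_{g,n}\frac{\hbar^{2g-2+n}}{(n-1)!}(\,\cdot\,)\big|_{z_2=\dots=z_n=z_1}$ to both sides — coincides, $\hbar$‑coefficient by $\hbar$‑coefficient, with the displayed relation, so that the two families of equations are literally the same after substituting the definitions.

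I would then carry out that identification in two blocks. On the left, symmetry of $F_{g,n}$ and the chain rule $\tfrac{d}{dz}F_{g,n}(z,\dots,z)=n\,(\partial_1F_{g,n})(z,\dots,z)$ turn the weighted sum into $d\,F^\a$. On the right:
\begin{itemize}
\item the splitting terms $\tfrac{1}{\Omega(z_1)}d_{u_1}d_{u_2}\bigl[F_{g-1,n+1}(u_1,u_2,z_{[\hat{1}]})+\sum^{\mathrm{stable}}F_{g_1,|I|+1}(u_1,z_I)F_{g_2,|J|+1}(u_2,z_J)\bigr]_{u_1=u_2=z_1}$: after specialization the set‑partition sum $I\sqcup J=[\hat{1}]$ produces a factor $\binom{n-1}{|I|}$, and the identity $\tfrac{1}{(n-1)!}\binom{n-1}{n_1}=\tfrac{1}{n_1!\,n_2!}$ (with $n_1+n_2=n-1$) collapses the double series into $\bigl(\tfrac{dF^\a}{dx}\bigr)^2$, while the $F_{g-1,n+1}$ piece resums to $\tfrac{d^2F^\a}{dx^2}$, the prefactor $\tfrac{1}{\Omega(z_1)}$ supplying exactly the $(2S_0'+a_1)^{-1}$ isolated above;
\item the dilaton–tail terms $\sum_{j\ge 2}\bigl[\tfrac{\omega^{z_j-\sigma(z_j)}(z_1)}{\Omega(z_1)}d_1F_{g,n-1}(z_{[\hat{j}]})-\tfrac{\omega^{z_j-\sigma(z_j)}(z_1)}{\Omega(z_j)}d_jF_{g,n-1}(z_{[\hat{1}]})\bigr]$ require a genuine coincidence limit, since $\omega^{z_j-\sigma(z_j)}(z_1)=\tfrac{dz_1}{z_1-z_j}-\tfrac{dz_1}{z_1-\sigma(z_j)}+(\text{holo})$ has a simple pole at $z_j=z_1$; writing $z_j=z_1+\epsilon$ and Laurent–expanding, the $\epsilon^{-1}$ parts cancel because $\Omega(z_1)=\Omega(z_j)+O(\epsilon)$ and, by symmetry, $\partial_jF_{g,n-1}=\partial_1F_{g,n-1}+O(\epsilon)$ at coincidence, and the finite remainder is a total $z_1$‑derivative of a lower free energy which, summed over $j$ and over $(g,n)$, reproduces precisely the $S_m''$ contribution and nothing else, absorbing at the same time the chain‑rule multiplicity $n$ against the $1/n!$ and $\binom{n-1}{\cdot}$ weights.
\end{itemize}
Comparing $\hbar$‑coefficients then yields \eqref{h m+1} for each $m\ge 1$, and reading the computation in the other order gives back \eqref{differential TR}, establishing the asserted equivalence.

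The step I expect to be the main obstacle is exactly this coincidence limit in the dilaton–tail terms together with its bookkeeping: proving the pole cancellation cleanly, identifying the finite part as $-S_m''$ with no spurious term, and reconciling the combinatorial weights so the two sides agree on the nose. This is essentially the argument of \cite[Theorem~4.7]{DM2014}, which I would follow verbatim; the only new inputs are the explicit local form $\Omega=-(2S_0'+a_1)\,dx$ and the fact that $a_1=-\tr\phi$ and $a_2=\det\phi$ are globally defined, so that the resulting one‑variable equation is independent of $p$ and of the chosen chart. The low cases ($m=1$, and the free energies $F_{0,2}$ and $F_{1,1}$ that fall outside the range $2g-2+n\ge 2$ of \eqref{differential TR}) are handled separately by direct substitution, exactly as in the subtle–care discussion of Section~\ref{sect:Airy}.
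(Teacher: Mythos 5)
Your overall strategy is in the end the same as the paper's: identify the local data entering \eqref{differential TR} in terms of $a_1$, $a_2$ and the chosen eigenvalue (your formula $\Omega=-(2S_0'+a_1)\,dx$ is the right adaptation of the trace-free case, since the Galois conjugate of $\a$ is $-a_1-\a$), resum the principally specialized recursion with the weights $\hbar^{2g-2+n}/(n-1)!$ into the Riccati equation for $\log\Psi^\a$, and defer the detailed term-matching to the computation of \cite{DM2014} — which is precisely what the paper means by ``the rest of the proof is parallel to \cite{DM2014}.'' The genuine difference is where the local analysis is performed. The paper's proof is devoted almost entirely to the branch points of $\tilde{\pi}:\widetilde{\Sigma}\lrar C$: using the blow-up construction \eqref{blow-up} it produces the normalization coordinate with $x=c_Q(z^2)\,z^2$, shows that the deck transformation there is $z\mapsto -z$ (also for the branch points coming from odd cusps of $\Sigma$ on $C_\infty$), writes $\eta$ and hence $\Omega$ in that coordinate, and only then invokes \cite{DM2014}. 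This is the input that is actually new in the meromorphic/singular setting, where $\sigma$ does not extend to $\overline{T^*C}$ and where the free energies have their poles along $\varpi^*(\Omega)_0$. Your generic-disc reduction on $V_\a$ over $p\in C\setminus\supp(\Delta)$ is legitimate for the literal equivalence of \eqref{differential TR} with \eqref{h m+1} (and matches the setting $V_p\not\ni q$ of the main theorem), but you should at least record why the \cite{DM2014} computation — which is organized around the simple ramification points and the involution — applies verbatim here; that is exactly the local normal form the paper's proof supplies.

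There is also one concrete error in your bookkeeping sketch. With the weight $1/(n-1)!$, the term $\frac{1}{\Omega(z_1)}d_{u_1}d_{u_2}F_{g-1,n+1}\big|_{u_1=u_2=z_1}$ resums only to the \emph{mixed} second-derivative part $\frac{1}{(N-2)!}\,\partial_1\partial_2F_{h,N}\big|_{\rm diag}$ of $\frac{d^2F^\a}{dx^2}$, not to all of it: the same-variable pieces $\partial_1^2F\big|_{\rm diag}$ and the chain-rule term coming from $x=x(z)$ are absent there, and the cross terms $2S_1'S_m'$ of \eqref{h m+1} (which involve the $(0,2)$ data excluded from the ``stable'' product sum) are likewise not produced by it. All of these must come out of the coincidence limit of the $\omega^{z_j-\sigma(z_j)}$ terms, so that limit does not yield ``precisely the $S_m''$ contribution and nothing else''; as written, your two bullets double-count $F''$ and leave the $S_1'$ terms unaccounted for. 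This is exactly the bookkeeping carried out in \cite{DM2014} (and it does close up, using \eqref{consistency} to identify the $S_1'$ pieces), so your plan survives, but the allocation must be corrected before the comparison of $\hbar$-coefficients can be said to match.
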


\begin{proof}
First let us take
 $q\in C$ one of the $q_i$'s of
\eqref{Delta 0}, above which 
$\tilde{\pi}:\widetilde{\Sigma}\lrar C$ is 
simply ramified 
at $Q:=\tilde{\pi}^{-1}(q)
\in \widetilde{\Sigma}$. We choose
a local coordinate $x$ on $C$ centered
at $q$. The Galois action 
of $\tilde{\sigma}$ on $\widetilde{\Sigma}$
fixes $Q$. Let $\widetilde{U}\subset 
\widetilde{\Sigma}$ be a  neighborhood of $Q$
such that $p\in \tilde{\pi}(\widetilde{U})$ 
and  $a_1\in H^0\big(\tilde{\pi}(\widetilde{U}),
K_C\big)$, i.e., holomorphic
on  $\tilde{\pi}(\widetilde{U})$. 
The defining equation of 
the spectral curve $\Sigma$ on 
$\tilde{\pi}(\widetilde{U})$ is
$$
\left(ydx+\half a_1\right)^2-
\left(\frac{1}{4}a_1^2-a_2\right)=0.
$$
Since $a_1$ is holomorphic at $x=q$, 
 the Galois action of $\sigma$ on the
spectral curve $\Sigma$ extends
to $\overline{T^*C}|_{\tilde{\pi}(\widetilde{U})}$
by the formula given in \eqref{involution}.
As we have shown in Case~1 of the proof of 
Theorem~\ref{thm:geometric genus formula},
 the degree $m_i$ of zero of the discriminant 
 $\frac{1}{4}a_1^2-a_2$ at $q=q_i$ is odd, say
 $m_i = 2\mu+1$, and the construction
 of $Bl_{\min}(\overline{T^*C})$ contains blow-ups of
 $\lfloor \frac{2\mu+1}{2}\rfloor = \mu$ times
 at the singular point above $q$. 
 In terms of the coordinate $x$, we can write 
 $$
 a_1=a_1(x)dx, \quad a_2=a_2(x)(dx)^2,\quad
 \frac{1}{4}a_1(x)^2-a_2(x) = c x^{2\mu+1} 
 $$
 with a unit $c\in \cO_{C,q}$.
 Define $y_0:=y+\half a_1(x)$. 
 Then the first blow-up at the singular point above
 $q$ is done by replacing $y_0 = y_1x$ so that 
 the proper transform is locally defined by 
 $$
 y_1^2 = cx^{2\mu-1}.
 $$
 The coordinate $y_1$ is the affine coordinate
 of the exceptional divisor.
 Repeating this process $\mu$-times, we end up with
 a coordinate $y_{\mu-1} = y_\mu x$ and
 an equation
 $$
 y_\mu ^2 = c x.
 $$
 Here again, $y_\mu$ is the affine coordinate of
 the last exceptional divisor resulted from the
 $\mu$-th blow-up. 
 We now write $z=y_\mu$ so that the proper transform
 of the $\mu$-times blow-ups is given by
 \begin{equation}
 \label{z and x}
 z^2 = c x.
 \end{equation}
 Note that the Galois action of $\tilde{\sigma}$
 at $Q$ is simply $z\longmapsto -z$.
 Solving \eqref{z and x} as a functional equation,
 we obtain a Galois invariant local expression
 \begin{equation}
 \label{x(z)}
 x = x(z) = c_Q(z^2) z^2,
 \end{equation}
 where $c_Q\in \cO_{\widetilde{\Sigma},Q}$
 is a unit element. This formula \eqref{x(z)}
 is precisely the local expression of the 
 morphism $\tilde{\pi}:\overline{\Sigma}
 \lrar C$ at $Q\in \overline{\Sigma}$.
 On the other hand, from the construction we 
 also have
 $$
 y_0 = z x^\mu = y+\half a_1(x),
 $$
 or equivalently,
 \begin{equation}
 \label{y(z)}
 \eta = y(z)dx = zx^\mu dx - \half a_1, \qquad 
 y(z)= zx^\mu  - \half a_1(x).
 \end{equation}
 We have thus obtained the normalization 
 coordinate $z$ on the desingularized 
 curve $\overline{\Sigma}$ near $Q$:
 \begin{equation}
 \label{normalization coordinate}
 \begin{cases}
 x = x(z) = c_Q(z^2) z^2,\\
 y = y(z) = zx^\mu  - \half a_1(x).
 \end{cases}
 \end{equation}
 Notice that we now have a parametric 
 equation for the singular spectral curve $\Sigma$:
 $$
\left( y(z) +\half a_1(x(z))\right)^2 
= z^2 x(z)^{2\mu}
= c x(z)^{2\mu+1} 
= \frac{1}{4}a_1(x(z))^2 - a_2(x(z)).
 $$
 The differential form $\eta$ of \eqref{y(z)}
 is the local expression of the form 
 $\Omega$ in the differential topological 
 recursion \eqref{differential TR}.
 
 We have now established the local expression of
 all functions and forms involved in the 
 topological recursion. From here the rest of the 
 proof is parallel to \cite{DM2014}.
 
 As we have shown in the process of the
 proof of 
 Theorem~\ref{thm:geometric genus formula},
 the situation is the same if $q\in C$ corresponds
 to a branch point of $\tilde{\pi}:\widetilde{\Sigma}
 \lrar C$ that comes from an odd cusp 
 of $\Sigma$ on the divisor $C_\infty$.
 A similar argument of the above proof 
 works for this case.

  This completes the proof.
\end{proof}

We have thus completed the proof of
Theorem~\ref{thm:main}.

\subsection{Singularity of quantum curves}
\label{sub:quantum singularity}

Let $p$ be a pole of the discriminant divisor
$\Delta$ of \eqref{Delta infinity}.
The local equation for the spectral curve 
around $p$ is 
$$
y^2+a_1(x)y+a_2(x)=0.
$$
As we have shown above,
the local generator of the quantum curve
as a Rees $D$-module is given by
a differential operator 
$$
\left(\hbar \frac{d}{dx}\right)^2
+a_1(x) \left(\hbar \frac{d}{dx}\right) + a_2(x).
$$
Therefore, the type of the singularity of the
quantum curve is determined by the local 
geometry of the spectral curve. 
We have  the following.

\begin{thm}[Regular and irregular singular points
of the quantum curve]
\label{thm:qc singularity}
Let $P\in \Sigma\cap C_\infty$ be a point at 
the intersection of 
the spectral curve $\Sigma$ and the divisor
$C_\infty$ at infinity of the ruled surface
$\overline{T^*C}$. 
Suppose it requires $\rho$ times blow
up at $P$ to construct $Bl(\overline{T^*C})$. 
Then the quantum curve of 
Theorem~\ref{thm:main}
has
\begin{itemize}
\item
 a regular singular point  at $p=\pi(P)$ if
 $\rho =1$.
 
\item If $\rho >1$, then the quantum curve has
an irregular singular point at $p=\pi(P)$
of class either $\rho-1$ or $\rho -\frac{3}{2}$,
the latter occurring only when $P$ is a cusp
singularity of $\Sigma$.
\end{itemize}
\end{thm}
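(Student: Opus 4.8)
The plan is to reduce the statement to a single numerical comparison at the point $p=\pi(P)$: between the Newton--polygon exponent $r$ of Definition~\ref{def:regular and irregular}, which controls the analytic type of the singularity, and the number $\rho$ of blow-ups performed at $P$ in Definition~\ref{def:Bl}. First I would fix a local coordinate $x$ on $C$ centered at $p\in\supp(\Delta_\infty)$ and, using a local trivialization of $K_C$, write the generator \eqref{qc main} of the quantum curve as $P(x,\hbar)=\left(\hbar\,d/dx\right)^2+a_1(x)\left(\hbar\,d/dx\right)+a_2(x)$ with $a_1=-\tr\phi$ and $a_2=\det\phi$ having poles at $p$ of orders $k=k_j$ and $\ell=\ell_j$ (the integers of \eqref{poles of a1a2}; as in Definition~\ref{def:regular and irregular} we set $k=-\infty$ if $a_1\equiv0$). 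Since conjugation by $e^{S_0/\hbar}$ and rescaling of $\hbar$ do not change pole orders, the singularity type of the quantum curve at $p$ is exactly that read off from $P(x,\hbar)$ (equivalently, from the $\hbar=1$ connection $\nabla^1$, or from the Rees $D$-module via the normal form \eqref{regular}), so Definition~\ref{def:regular and irregular} applies verbatim: $p$ is a regular singular point iff $k\le1$ and $\ell\le2$, and otherwise an irregular singular point of class $r-1$, with $r$ given by \eqref{irregular class}.

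The second step is the identification $\rho=\lceil r\rceil$, which is immediate upon comparing \eqref{irregular class} with \eqref{r}: the quantity $r_j$ governing the number of blow-ups at $p_j$ in Definition~\ref{def:Bl} is literally the Newton--polygon value $r$ of Definition~\ref{def:regular and irregular} for the same pair $(k,\ell)=(k_j,\ell_j)$, and by Definition~\ref{def:Bl} one blows up $\lceil r_j\rceil$ times at $P$; that these $\lceil r\rceil$ blow-ups are precisely what is needed both to desingularize $\Sigma$ and to detach its proper transform from $C_\infty$ is the content of the proof of Theorem~\ref{thm:nonsingular}, which I would simply cite. Hence $\rho=\lceil r\rceil$, and $\rho=1$ forces $r\le1$: if $2k\ge\ell$ then $r=k\le1$ and so $\ell\le2k\le2$, while if $2k\le\ell$ then $r=\ell/2\le1$ and so $\ell\le2$ and $k\le\ell/2\le1$; in either case $k\le1$ and $\ell\le2$, so by the first step $p$ is a regular singular point. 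This gives the first bullet (and, reading the chain of implications backwards and using that $P\in\Sigma\cap C_\infty$ forces $\max(2k,\ell)\ge1$, hence $r>0$ and $\lceil r\rceil\ge1$, one also gets the converse $\rho=1\iff$ regular).

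For the second bullet, assume $\rho>1$, so $r>1$ and, by the first step, the quantum curve has an irregular singular point at $p$ of class $r-1$; it remains only to express $r-1$ via $\rho=\lceil r\rceil$, and for this I would split according to whether $r$ is an integer or a half-integer. From \eqref{irregular class}, $r$ is a half-integer exactly when $2k<\ell$ with $\ell$ odd (note $2k=\ell$ forces $\ell$ even, so this does not overlap the branch $r=k$), and by the case analysis in the proof of Theorem~\ref{thm:geometric genus formula} this is precisely the condition that $P\in\Sigma\cap C_\infty$ be a cusp singularity of $\Sigma$; otherwise $r\in\bZ$ and $P$ is not such a cusp. In the integer case $\rho=\lceil r\rceil=r$, so the class equals $r-1=\rho-1$; in the half-integer case $\rho=\lceil r\rceil=r+\frac{1}{2}$, so the class equals $r-1=\rho-\frac{3}{2}$, and this alternative occurs only when $P$ is a cusp, as claimed. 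This completes the argument.

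I do not expect a genuine obstacle: the proof is bookkeeping layered on Definitions~\ref{def:regular and irregular} and \ref{def:Bl} together with the case analysis already carried out in the proofs of Theorems~\ref{thm:geometric genus formula} and \ref{thm:nonsingular}. The one point that needs real care --- and is the conceptual heart --- is the verification that the ``blow-up count'' invariant $r_j$ of \eqref{r} is the same Newton--polygon exponent as the ``analytic'' invariant $r$ of \eqref{irregular class}, fed by the same data (the pole orders of $\tr\phi$ and $\det\phi$ at $p_j$); once that is in place, the ceiling-versus-value dichotomy $\rho=r$ versus $\rho=r+\frac{1}{2}$ produces the two classes $\rho-1$ and $\rho-\frac{3}{2}$ and pins the latter to the cusp case.
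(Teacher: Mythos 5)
Your proposal is correct and follows essentially the same route as the paper's own proof: identify the blow-up count at $P$ as $\rho=\lceil r\rceil$ with $r$ the Newton-polygon invariant of \eqref{r} (equal to that of Definition~\ref{def:regular and irregular} since both are fed by the pole orders $k,\ell$ of $\tr\phi$ and $\det\phi$), deduce regularity when $r\le 1$ and class $r-1$ when $r>1$, and split into $r$ integral ($\rho=r$, class $\rho-1$) versus half-integral ($\rho=r+\tfrac12$, class $\rho-\tfrac32$, occurring exactly at cusps, as in the proofs of Theorems~\ref{thm:geometric genus formula} and \ref{thm:nonsingular}). Your write-up is in fact somewhat more careful than the paper's at the step verifying that $r\le 1$ forces $k\le 1$ and $\ell\le 2$, but the argument is the same.
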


\begin{proof}
As in the proof of 
Theorem~\ref{thm:geometric genus formula},
we denote by $k$ (reps.\ $\ell$)
the pole order of $a_1(x)$ (reps.\ $a_2(x)$) at 
$x=p$. Let $r$ be the invariant defined in 
\eqref{r}. Then by 
Definition~\ref{def:regular and irregular},
$p$ is a regular singular point of the
quantum curve if $0<r\le 1$. In this
case we need to blow-up once at $P$ for
construction of $Bl(\overline{T^*C})$, 
because 
$\lceil r \rceil = 1$.
If $r>1$, then the singularity is irregular with 
class $r-1$, and we need $\lceil r \rceil$ times
blow-ups. As we see from  the proof
of Theorem~\ref{thm:nonsingular}, a
non-integer $r$ occurs only when
$P$ is a cusp. 
This completes the proof.
\end{proof}

\section{The classical differential equations
as quantum curves}
\label{sect:classical}

The key examples of the theory of quantum 
curves as presented in this paper are the
classical differential equations. In this section,
we present the Hermite and Gauss hypergeometric
differential equations.

\subsection{Hermite differential equation}
\label{sub:Hermite}

The base curve is   $C=\bP^1$, 
as in the Airy case.
The  stable Higgs bundle $(E,\phi)$ consists of
 the trivial vector bundle
$
E=\cO_{\bP^1}\dsum \cO_{\bP^1}
$
 and a Higgs field
\begin{equation}
\label{phi-Hermite}
\phi = 
\begin{bmatrix}
& 1\\-1\ &-x
\end{bmatrix} dx
:
E\lrar E
\tensor K_{\bP^1}(2) = E.
\end{equation}
In the affine coordinate $(x,y)$ of the Hirzebruch
surface $\bF^2$, 
the spectral curve $\Sigma$ is given by
\begin{equation}
\label{Hermite-spectral}
\det\big(\eta-\pi^*(\phi)\big) = (y^2 +xy +1)(dx)^2 = 0,
\end{equation}
where $\pi:\bF_2\lrar \bP^1$ is the projection.
In the other affine coordinate
$(u,w)$ of \eqref{uw}, 
the spectral curve is singular at $(u,w) = (0,0)$:
\begin{equation}
\label{Hermite singularity}
u^4  -uw +w^2 = 0.
\end{equation}
These equations tell us that 
$\Sigma \cdot C_0 = 0$ and $\Sigma \cdot 
C_\infty = 4$. Therefore, 
$$
\Sigma = 2C_0+4F \in \NS(\bF_2).
$$

\begin{figure}[htb]
\centerline{\epsfig{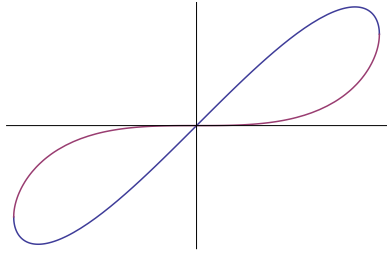}}
\caption{The spectral curve $\Sigma$ of
\eqref{Hermite-spectral}. The horizontal 
line is the divisor $C_\infty$ at infinity, and the vertical
line is the fiber class $F$. The spectral curve
intersects with $C_\infty$ four times. One of the two
curve germ components 
 is given by $w=u$, and
the other by $w=u^3$.}
\label{fig:Hermite spectral}
\end{figure}

\begin{figure}[htb]
\centerline{\epsfig{file=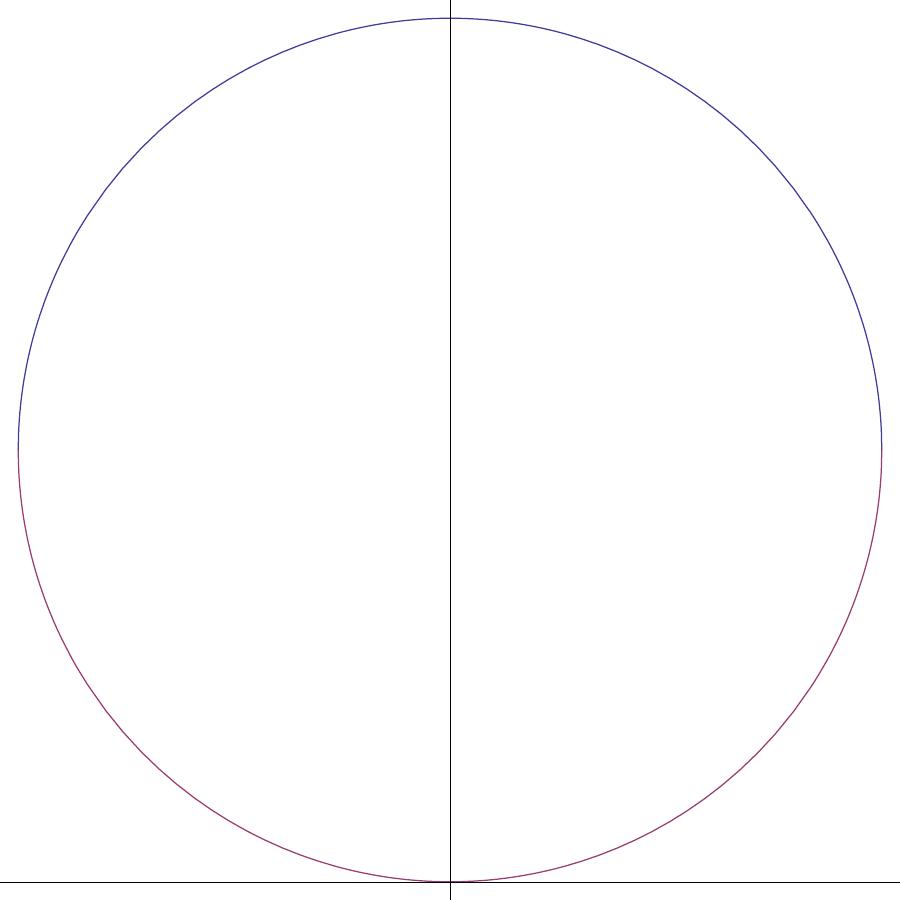, width=1in}}
\caption{The desingularization $\widetilde{\Sigma}$ of
the spectral curve
\eqref{Hermite-spectral}. }
\label{fig:Hermite spectral 2}
\end{figure}

The discriminant of the defining equation
\eqref{Hermite-spectral}
is
$$
\left(-\frac{1}{4}x^2+1\right)(dx)^2
= -\frac{1}{4} (x-2)(x+2)(dx)^2
=\frac{u^2-\frac{1}{4}}{u^6}(du)^2.
$$
It has two simple zeros at $x=\pm 2$ and
a pole of order $6$ at $x=\infty$. 
We note that 
$$
\tr(\phi) = -xdx= \frac{du}{u^3}
$$
has a cubic pole at $u=0$.
As explained in Case~3 of the proof of
Theorem~\ref{thm:geometric genus formula},
we need to compare the poles of $\tr(\phi)$ and
$$
\det(\phi) = (dx)^2 = \frac{(du)^2}{u^4}.
$$
Since $4-3=1$,
we blow up $\Sigma$ once at its nodal singularity
 $(u,w)=(0,0)$.
 We introduce $w=w_1 u$. Then \eqref{Hermite singularity} becomes
 \begin{equation}
 \label{circle}
 u^2 + \left(w_1-\half\right)^2 = \frac{1}{4}.
 \end{equation}
The geometric 
genus formula 
\eqref{pg}
tells us that $\Sigma_{\min}$
has genus $0$, and 
$\Sigma_{\min}\lrar C$
is ramified at two points, 
corresponding to the original ramification 
points $(x,y) = (\pm 2,\mp1)$ of $\Sigma$.
The rational parametrization of \eqref{circle} is
given by 
$$
\begin{cases}
u =\half \cdot \frac{t^2-1}{t^2+1}\\
w_1=\half -\frac{t}{t^2+1},
\end{cases}
$$
where $t$ is an affine coordinate of 
$\widetilde{\Sigma}$ such that $t=\pm 1$ gives
$(u,w)= (0,0)$. The parameter $t$ is a normalization
coordinate of the spectral curve $\Sigma$:
\begin{equation}
\label{Hermite t}
\begin{cases}
x = 2+\frac{4}{t^2+1}\\
y=-\frac{t+1}{t-1},
\end{cases}
\qquad
\begin{cases}
u = \half \cdot \frac{t^2-1}{t^2+1}\\
w=\frac{1}{4}\cdot \frac{(t-1)^3(t+1)}{(t^2+1)^2}.
\end{cases}
\end{equation}
We notice that
the  expression of \eqref{Hermite t} is exactly
the same as \cite[(3.13), (3.14)]{DMSS}.
The integral topological recursion 
applied to  $\widetilde{\Sigma}$
again agrees with that of \cite{DMSS}.

The quantum curve construction of \cite{MS}
is thus consistent with our new definition. The 
result is
\begin{equation}
\label{qc-Hermite}
\left(
\left(\hbar \frac{d}{dx}\right)^2 +
x \left(\hbar \frac{d}{dx}\right) +1
\right)\Psi(x,\hbar) = 0.
\end{equation}
Since $x$ and $d/dx$ do not commute, the
passage from \eqref{Hermite-spectral} to
\eqref{qc-Hermite} is non-trivial, in the sense
that the constant term could have contained
a term $c\cdot \hbar$. On the affine open subset
$U_1=\bP^1\setminus \{0\}$, the operator
of \eqref{qc-Hermite}
has an expression
$$
u^4 \left(\hbar \frac{d}{du}\right)^2
+(2u^3\hbar -u)\left(\hbar \frac{d}{du}\right) +1
\in \widetilde{\cD_{U_1}}.
$$
Thus the point $\infty\in \bP^1$ is  an irregular
singular point of class $2$ of \eqref{qc-Hermite}
for $\hbar\ne 0$.

The semi-classical limit
\eqref{scl} of 
\eqref{qc-Hermite} using the WKB formula
\eqref{WKB} is
\begin{equation}
\label{Catalan scl}
S_0'(x)^2 +x S_0'(x) + 1 = 0.
\end{equation}
Following \cite{DMSS}, define
\begin{equation}
\label{z}
z = \frac{t+1}{t-1} = \sum_{m=0}^\infty
\frac{C_m}{x^{2m+1}},
\end{equation}
where $C_m=\frac{1}{m+1}\binom{2m}{m}$
is the $m$-th Catalan number. 
The inverse function of \eqref{z} for 
$x= \infty \Rightarrow z=0$ is given by
$x = x(z) = z+ \frac{1}{z}$. In terms of $z$,
the two solutions of \eqref{Catalan scl}
are given by 
\begin{equation}
\label{Catalan S0}
S_0\big(x(z)\big) =
\begin{cases}
 -\half z^2 +\log z +\const
 \\
 -\half \frac{1}{z^2} -\log z +\const.
 \end{cases}
\end{equation}
Corresponding to these choices,
the solutions to the consistency condition 
\eqref{consistency} are given by
\begin{equation}
\label{Catalan S1}
S_1\big(x(z)\big) = 
\begin{cases}
-\half \log(1-z^2)+\const
\\
-\half \log(1-z^2)+\log z+\const.
\end{cases}
\end{equation}

Every solution of \eqref{qc-Hermite}
is a linear combination of two solutions,
with coefficients given by
arbitrary functions in $\hbar$. One is
 given by
the \textbf{Kummer confluent 
hypergeomtric function} of  \eqref{Kummer}:
\begin{equation*}
\Psi_1(x,\hbar) = {}_1F_1\left(\frac{1}{2\hbar};
\half;-\frac{x^2}{2\hbar}\right).
\end{equation*}
The other is a bit more complicated function known
as the \textbf{Tricomi confluent 
hypergeomtric function} 
\begin{equation}
\label{Tricomi}
\Psi_2(x,\hbar) = 
\frac{\Gamma[\half]}{\Gamma[\frac{1}{2\hbar}
+\half]}
{}_1F_1\left(\frac{1}{2\hbar};
\half;-\frac{x^2}{2\hbar}\right)
+\frac{\Gamma[-\half]}{\Gamma[\frac{1}{2\hbar}]}
\sqrt{\frac{x^2}{-2\hbar}}
{}_1F_1\left(\frac{1}{2\hbar}
+\half;
\frac{3}{2};-\frac{x^2}{2\hbar}\right).
\end{equation}
For a positive real $\hbar >0$, 
let us consider a special solution 
\begin{equation}
\label{eq:Catalan Psi}
\Psi^{\Catalan}(x,\hbar):=
\left(-\frac{1}{2\hbar}\right)^{\frac{1}{2\hbar}}
\Psi_2(x,\hbar).
\end{equation}
This solution corresponds to the 
WKB solution \eqref{WKB} for the 
first choices of \eqref{Catalan S0}
and \eqref{Catalan S1}, with both constants
of integration to be set $0$.
Then we have a closed formula for the 
all-order asymptotics of this particular confluent
hypergeometric function:
\begin{equation}
\begin{aligned}
\label{Catalan Psi expansion}
\Psi^{\Catalan}(x,\hbar)
&=
\left(\frac{1}{x}\right)^{\frac{1}{\hbar}}
\sum_{n=0}^\infty 
\frac{\hbar^n \left(\frac{1}{\hbar}\right)_{2n}}
{(2n)!!}\cdot  \frac{1}{x^{2n}}
\\
&=
\exp\left(
\sum_{2g-2+n\ge -1}\frac{1}{n!}\hbar^{2g-2+n}
F_{g,n}^\Catalan(x,\dots,x)
\right),
\end{aligned}
\end{equation}
where $(1/\hbar)_{2n}$ is the Pochhammer
symbol \eqref{Poch}.
The free energies are defined by
\begin{equation}
\label{Fgn Catalan}
F_{g,n}^\Catalan(x_1,\dots,x_n)
=
\sum_{\mu_1,\dots,\mu_n>0}
\frac{C_{g,n}(\mu_1,\dots,\mu_n)}
{\mu_1\cdots\mu_n}
\prod_{i=1}^n x_i^{-\mu_i}
\end{equation}
for $2g-2+n>0$, and $F_{0,1}^\Catalan (x)= S_0(x)$
and $\half F_{0,2}^\Catalan (x)= S_1(x)$.
Here, $C_{g,n}(\mu_1,\dots,\mu_n)$ is
the generalized Catalan number of genus $g$ and
$n$ labeled vertices of degrees
$(\mu_1,\dots,\mu_n)$ that counts the number
of \textbf{cellular graphs}
\cite{DMSS, WL}. In 
\cite[Theorem~4.3, Proposition A.1]{DMSS},
we show that $F_{g,n}^C$
satisfies the differential recursion equation 
\eqref{differential TR}. (We note that
the differential recursion of \cite{DMSS} is
derived by taking the Laplace transform 
of \cite[Equation 6]{WL}). The initial geometric
data \eqref{W0102} are also the same
as \cite[(3.12), (4.3)]{DMSS}. Therefore,
the application of the topological recursion
to the desingularized spectral curve 
$\Sigma_{\min}$ produces 
\eqref{Fgn Catalan}, and the quantum curve
\eqref{qc-Hermite}.

At the special value $\hbar = 1$, 
the expansion \eqref{Catalan Psi expansion} has
the following simple form
$$
\sum_{n=0}^\infty
\frac{(2n-1)!!}{x^{2n+1}}=\exp\left(
\sum_{2g-2+n\ge -1}\frac{1}{n!}
\sum_{\mu_1,\dots,\mu_n>0}
\frac{C_{g,n}(\mu_1,\dots,\mu_n)}
{\mu_1\cdots\mu_n}
\prod_{i=1}^n x^{-(\mu_1+\cdots +\mu_n)}
\right).
$$
Note that the sum of the degrees of the vertices
$\mu_1+\cdots +\mu_n$ is always even. Therefore,
except for the unstable geometries $(g,n)=(0,1)$
and $(0,2)$, 
the above expansion is in $x^{-2}$. This indicates
 that the Hermite equation has
 an irregular singular point of class 
$2$  at 
$x=\infty$.

\subsection{Gau\ss\ hypergeometric differential
equation}
\label{sub:Gauss}

The Higgs bundle $(E,\phi)$ on $\bP^1$ is
again given by the trivial bundle 
$E=\cO_{\bP^1}\dsum \cO_{\bP^1}$, and a 
Higgs field
 \begin{equation}
 \label{Gauss phi}
\phi= \begin{bmatrix}
&\frac{1}{x}\\ \\
-\frac{ab}{x-1}&-\frac{(a+b+1)x-c}{x(x-1)}
\end{bmatrix}dx,
\end{equation}
where $a,b,c$ are constant parameters.
The spectral curve $\Sigma\in \bF_2$ is defined 
by
\begin{equation}
\label{Gauss spectral}
x(x-1)y^2
+((a+b+1)x-c)y
+ab=0.
\end{equation}
In terms of the $(u,w)$ coordinate of
\eqref{uw}, the spectral curve is given by
\begin{equation}
\label{Gauss singularity}
abw^2+(cu-a-b)uw
-u^2(u-1)=0.
\end{equation}
It has an ordinary double point at $(u,w)=(0,0)$.
The discriminant divisor
of  \eqref{discriminant} is
\be
\label{Gauss discriminant}
\Delta^\Gauss = 
\left(\frac{\left(\frac{1}{4}((a+b+1)x-1)^2
-abx(x-1)\right)(dx)^2}
{x^2(x-1)^2}
\right),
\ee
which consists of two simple zeros
and $3$ double poles at $x=0, 1, \infty$. 
Following 
Definition~\ref{def:Bl},
we blow up $\bF_2$ once at the point at infinity
of $\Sigma$
to construct the normalization $\Sigma_{\min}$.
The invariant $\b$ of \eqref{delta}
is equal to $2$, and hence 
$\Sigma_{\min}$ is isomorphic to 
$\bP^1$.

The quantum curve we obtain is a
Gau\ss\ hypergeometric differential equation
\be
\label{hbar Gauss}
\left(x(x-1)\left(\hbar\frac{d}{dx}\right)^2
+\left((a+b+1)x-c\right)\hbar\frac{d}{dx}
+ab\right)\Psi^\Gauss (x,\hbar) = 0.
\ee
One of the two independent solutions
that is holomorphic 
at $x=0$ is given in terms of
a \textbf{Gau\ss\ hypergeometric function}
\begin{multline}
\label{psi Gauss}
\Psi^\Gauss (x,\hbar)=
  {}_2F_1\Bigg(-\frac{\sqrt{\left
   (a+b+1-\hbar\right)^2-4 a
   b}}{2 \hbar}+\frac{a+b+1}{2
   \hbar}-\frac{1}{2},
   \\
   \frac{\sqrt{\left
   (a+b+1-\hbar\right)^2-4 a
   b}}{2 \hbar}+\frac{a+b+1}{2
   \hbar}-\frac{1}{2};\frac{c}{\hbar};x\Bigg).
   \end{multline}
If we choose $\sqrt{\left
   (a+b+1-\hbar\right)^2-4 a
   b}=b-a$ when $\hbar = 1$, then 
   \be
   \label{psi Gauss 1}
   \Psi^\Gauss (x,1) = {}_2F_1(a,b;c,x)
   =\sum_{n=0}^\infty\frac{(a)_n(b)_n}{(c)_n}
   \;\frac{x^n}{n!}
   \ee
   solves the standard form of the
   Gau\ss\ hypergeometric equation
   \be
   \label{Gauss 1}
   \left(x(x-1)\left(\frac{d}{dx}\right)^2
+\left((a+b+1)x-c\right)\frac{d}{dx}
+ab\right)\Psi^\Gauss (x,1) = 0.
   \ee

Now let us specialize $a=b=\half, c=1$. 
We have the relation between the 
hypergeometric function and the
period function of \eqref{elliptic periods}:
\be
\label{period = 2F1}
\frac{\omega_1(x)}{\pi} = 
{}_2F_1\left(\half,\half;1,x\right).
\ee
The spectral curve \eqref{Gauss spectral}
   becomes 
\be
\label{Gauss spectral special}
x(x-1)y^2 +(2x-1)y +\frac{1}{4}=0.
\ee
   On the normalization
   $\widetilde{\Sigma}$, we have
   $W_{0,1}(x) = ydx$, which actually 
   depends of the
   sheet of the covering $\tilde{\pi}:
   \widetilde{\Sigma}\lrar \bP^1$. For our
   purpose, we choose 
   \be
   \label{y Gauss}
   y =y(x)= \frac{-(2x-1)-\sqrt{3x^2-3x+1}}
   {2 x\left(x-1\right)}.
   \ee
Then 
\begin{multline}
\label{S0 Gauss}
S_0(x) = F_{0,1}(x) = \int y(x)dx
\\
=
\frac{x}{4}-\frac{21 \left(4 \sqrt{3}-7\right)
   }{32 \left(2 \sqrt{3}-3\right)^2}x^2+\frac{23
   \left(26 \sqrt{3}-45\right) }{32 \left(2
   \sqrt{3}-3\right)^3}x^3-\frac{2547 \left(56
   \sqrt{3}-97\right) }{1024 \left(2
   \sqrt{3}-3\right)^4}x^4
   \\
   +\frac{7281 \left(362
   \sqrt{3}-627\right) }{2560 \left(2
   \sqrt{3}-3\right)^5}x^5
   -\frac{38115 \left(780
   \sqrt{3}-1351\right) }{4096 \left(2
   \sqrt{3}-3\right)^6}x^6
   +\frac{265869 \left(5042
   \sqrt{3}-8733\right) }{28672 \left(2
   \sqrt{3}-3\right)^7}x^7+\cdots
   \end{multline}
solves the semi-classical limit equation
\eqref{scl}. The solution of the 
consistency condition \eqref{consistency}
is given by
\begin{multline}
\label{S1 Gauss}
S_1(x) = -\int\frac{y'(x)}{2y(x)+
\frac{2x-1}{x(x-1)}}dx
\\
=
-\frac{7 }{32}x^2-\frac{53 }{96}x^3-\frac{1075
   }{1024}x^4-\frac{4319 }{2560}x^5-\frac{28319
   }{12288}x^6-\frac{72109
   }{28672}x^7+\cdots.
\end{multline}
The solution of   \eqref{h m+1} for $m=1$ is
\begin{equation}
\label{S2 Gauss}
S_2(x) = \frac{7 x^2}{32}+\frac{113 x^3}{96}+\frac{1821
   x^4}{512}+\frac{1269 x^5}{160}+\frac{56151
   x^6}{4096}+\frac{487323
   x^7}{28672}+\cdots.
\end{equation}
From \eqref{S0 Gauss}, \eqref{S1 Gauss}
and \eqref{S2 Gauss}, we have an expression
\begin{multline}
\label{Gauss asymptotic}
\exp\left(\frac{1}{\hbar}S_0(x)+S_1(x)+\hbar S_2(x)
\right)
\\
=
1+\frac{x}{4\hbar}+\frac{1+7\hbar-7\hbar^2
+7\hbar^3}{32\hbar^2}x^2
+\frac{1+21\hbar+71\hbar^2 - 191\hbar^3+\cdots}
{384\hbar^3}x^3
\\
+
\frac{1+42\hbar+473\hbar^2+598\hbar^3+\cdots}
{6144\hbar^4}x^4 
+
\frac{1+70\hbar+1585\hbar^2+1141-\hbar^3+\cdots}
{122880\hbar^5}x^5+\cdots.
\end{multline}
This is in good agreement of the hypergeometric
function of \eqref{psi Gauss}, which can be 
expanded as
\begin{multline*}
\Psi^\Gauss(x,\hbar)
= 1+\sum_{n=1}^\infty \frac{1}{4^n n!}
\frac{\prod_{m=1}^n\left(1+8(m-1)\hbar+
4(m-1)(m-2)\hbar^2\right)}
{\prod_{m=1}^n\left(1+(m-1)\hbar\right)}
\left(\frac{x}{\hbar}\right)^n
\\
=
1+\frac{x}{4\hbar}+\frac{1+8\hbar}{32\hbar
(1+\hbar)}x^2
+\frac{(1+8\hbar)(1+16\hbar+8\hbar^2)}
{384\hbar^3 (1+\hbar)(1+2\hbar) }x^3
\\
+\frac{(1+8\hbar)(1+16\hbar+8\hbar^2)
(1+24\hbar+24\hbar^2)}
{6144\hbar^4 (1+\hbar)(1+2\hbar)(1+3\hbar)
} x^4+\cdots,
\end{multline*}
up to order of $\hbar^3$ of the numerator of
every coefficient of $x^n$, $n\ge 0$. 
The topological recursion applied to the
spectral curve \eqref{Gauss spectral}
with $F_{0,1} = S_0(x)$ and the standard 
Riemann prime form on $\widetilde{\Sigma} = \bP^1$
for $F_{0,2}$ then gives a genus expansion of
$\Psi^\Gauss(x,\hbar)$, constructing a
genus $g$ B-model on the curve
\eqref{Gauss spectral}.

\hyphenation{Gar-ou-fal-i-dis}

\begin{ack}
The authors are grateful to  
the American Institute of Mathematics in Palo Alto, 
the Banff International Research Station,
 the Institute for Mathematical
Sciences at the National University of Singapore,
 Kobe University, and
 Max-Planck-Institut f\"ur Mathematik in Bonn,
for their hospitality and financial support. A large
portion of this work is carried out during the
authors' stay
in these institutions. 
They also thank
J\o rgen Andersen,
Philip Boalch, 
Leonid Chekhov,
Bertrand Eynard,
Tam\'as Hausel,
Kohei Iwaki,
Maxim Kontsevich,
Alexei Oblomkov,
Albert Schwarz, 
Yan Soibelman, 
 Ruifang Song,
 and Peter Zograf
for useful comments,   suggestions, and discussions.
M.M.\ thanks the Euler International 
Mathematical Institute in St.~Petersburg for 
hospitality, where the paper is completed.
The research of O.D.\ has been supported by
 GRK 1463 \emph{Analysis,
Geometry, and String Theory} at the 
Leibniz Universit\"at 
 Hannover.
The research of M.M.\ has been supported 
by MPIM in Bonn, 
NSF grants DMS-1104734 and DMS-1309298, 
and NSF-RNMS: Geometric Structures And 
Representation Varieties (GEAR Network, 
DMS-1107452, 1107263, 1107367).
\end{ack}


\providecommand{\bysame}{\leavevmode\hbox to3em{\hrulefill}\thinspace}

\bibliographystyle{amsplain}

\end{document}